\newtheorem{theorem}{Theorem}[section]
\newtheorem{corollary}{Corollary}[section]
\newtheorem{lemma}{Lemma}[section]
\newtheorem{remark}{Remark}[section]
\newtheorem{pro}{Proposition}[section]
\newcommand{\RR}{{\mathbb R}}
\newcommand{\ep}{\hfill {$\Box$}}
\begin{document}

\title[Resonances rank one]
{Resonances under rank one perturbations}

\author[Bourget]{Olivier Bourget*}

\author[Cort\'es]{V\'ictor H. Cort\'es*}
\author[Del R\'io]{Rafael del R\'io**}
\author[Fern\'andez]{Claudio Fern\'andez*}

\address{*Pontificia Universidad Cat\'olica de Chile. Facultad de Matem\'aticas}
\address{**IIMAS-UNAM, Department of Mathematical Physics, 04510 CDMX, Mexico}

\email{delrio@iimas.unam.mx}
\email{cfernand@mat.uc.cl}
\email{bourget@mat.uc.cl}
\email{vcortes@mat.uc.cl}

\date{22feb2017}

\thanks{Fondecyt Grants 1141120 and 1161732, ECOS/CONICYT Grant
C10E01.}
\thanks{Research partially supported by UNAM-DGAPA-PAPIIT IN105414}

\keywords{}

\begin{abstract}
We study resonances generated by rank one perturbations of selfadjoint operators with eigenvalues embedded in the continuous spectrum. Instability
of these eigenvalues is analyzed and almost exponential decay for the associated resonant states is exhibited. We show how these results can be
applied to  Sturm-Liouville operators. Main tools are the Aronszajn-Donoghue theory for rank one perturbations, a reduction process of the resolvent
based on Feshbach-Livsic formula, the Fermi golden rule and a careful analysis of the Fourier transform of quasi-Lorentzian functions. We relate
these results to sojourn time estimates and spectral concentration phenomena
\end{abstract}
\maketitle


\section{Introduction}

The resonance phenomenon appears in several areas of physics and mathematics such as Classical, Quantum and Wave Mechanics. Several attempts have been done to give it a precise mathematical description. We refer to \cite{simon2} for a discussion about the difficulties arising in characterizing rigorously the concept of resonance for autonomous systems in  Quantum Mechanics.

One of the most fruitful approaches consists in defining quantum resonances as poles of a suitable meromorphic continuation of the Hamiltonian resolvent, from the upper half complex plane to the lower half plane. Each pole appears as an "eigenvalue" with negative imaginary part, corresponding to generalized eigenfunctions outside the Hilbert space. There is a huge literature on this subject and we refer the reader to \cite{hislop} and references therein.

Resonances can also be characterized in terms of time exponential decay of the time evolution of the system governed by the Hamiltonian (defined as a self-adjoint operator on some Hilbert space $\mathcal H$). This behaviour can be traced by means of the survival probability $P_{\varphi}$ for some suitable states $\varphi$. This quantity defined by
$$
P_{\varphi}(t)=|\langle \varphi, e^{-iHt}\varphi \rangle |^2 \, ,
$$
measures the probability of finding at time $t$ the system governed by the Hamiltonian $H$ in its initial state $\varphi$. On one hand, we know that exact exponential decay is impossible for many models of physical interest, see e.g. \cite{simon2}. On the other hand, if $z=\lambda -i\Gamma$ ($\Gamma >0$) is a pole of the resolvent of the Hamiltonian $H$ with "resonant eigenfunction" $\varphi$ (i.e. $H\varphi=z \varphi$), we would formally expect that,
$$
P_{\varphi}(t)=e^{-2 \Gamma t} \|\varphi\|^2 \, ,
$$
which is incorrect if the resonant eigenfunction $\varphi$ does not belong to the Hilbert space. Thus, in presence of a resonance $z$, the best one can hope is the existence of a state $\psi \in \mathcal H$ such that the quantity $\langle \psi ,e^{-iHt}\psi \rangle$ behaves approximately as $e^{-izt}$. Both these quantities equal $1$ at $t=0$ and in most cases of interest, both approach to zero as $t$ tends to $\infty$. The main objective is then to estimate the difference,
\begin{equation*}
\langle \psi, e^{-iHt}\psi \rangle \,-\, e^{-izt}\, ,
\end{equation*}
for $t$ not to close to $0$ nor to $\infty$.

For differential operators on the real half line, this difference can be estimated uniformly in time \cite{lavine} or in $L^2$ norm \cite{astaburuaga} by means of ODE techniques. In these cases, the function $\psi$ is a truncated resonant eigenfunction. Pointwise estimates have been exhibited when the resonance appears with the perturbation of an instable simple eigenvalue embedded in some continuous spectrum, see e.g. \cite{cattaneo1} and \cite{jensen} for a review. The main ingredients are in that case the Feshbach-Livsic reduction and the Fermi Golden rule. In \cite{cattaneo1}, this approach is actually combined with some positive commutator techniques (Mourre theory) and the estimates are obtained once the eigenfunction is localized in energy.

The consistent use of the Feshbach-Livsic reduction to study resonances can be traced back at least to \cite{How} and has been a source of several results in the last decades in different areas. In particular, it has been used consistently in spectral theory for Non-Relativistic Quantum Electrodynamics since \cite{bfs1} and \cite{bfs2}. For the relationship between time evolution (the perspective we address in this paper) and poles of the resolvent in the context of analytic perturbation theory, see also \cite{hhh}, \cite{fgs} and references therein.

In this paper, we adapt the Feshbach-Livsic reduction to the context of differential operators on the real half line and pointwise estimates are exhibited when the resonance appears with the perturbation of an instable simple eigenvalue embedded in the absolutely continuous spectrum of such operators. Although various tools developed here can be easily adapted to a fairly wide class of perturbations, we have decided to narrow our discussion to the rank-one case and to relate these results with classical results in this field \cite{don}, \cite{simon1}. We intend to  propose several extensions in a forthcoming paper.

We start in Section 2 by establishing conditions which ensure that the Fourier transform of a Lorentz-like function exhibits approximate exponential time decay. The proof of Theorem \ref{protothm} is based on techniques of classical analysis, we have mainly singled out from \cite{cattaneo1}. The development of this section is independent from the rest of the paper. In Section 3, we turn our attention on rank one perturbations of the form
$$
H_{\kappa}=H_0 + \kappa |\psi \rangle  \langle \psi | \, ,
$$
where $H_0$ has a simple eigenvalue embedded in some absolutely continuous spectrum. In Theorems \ref{absence} and \ref{pp}, we show how the instability of the embedded eigenvalue and the spectral properties of the operators $H_{\kappa}$ are related to the boundary values of the reduced resolvent of $H_0$ and the Fermi Golden Rule.
Next, we prove Theorem \ref{thmrankone}, which formalizes the existence of a resonance in term of almost exponential decay in the case of rank one perturbations under suitable hypotheses on the reduced resolvent of $H_0$. The proof combines the Feshbach-Livsic reduction process and Krein's formula with Theorem \ref{protothm}. Corollary \ref{spectralconc} discusses the relationships with Kato's spectral concentration. In Corollary \ref{sojourn}, we deduce the asymptotics for the sojourn time of the corresponding  eigenstate under the evolution governed by $H_{\kappa}$ for small values of $\kappa$. Finally, we show in Section 4, how the boundary properties of the reduced resolvent of $H_0$ can be deduced from the properties of the spectral measure of $H_0$ when it has finite multiplicity. This reformulation of the problem is summed up in Theorem \ref{abstract} and its proof makes essential use of the properties of the Borel transform, see  Section \ref{Boundary limit of Borel transforms}. All these results are illustrated in Section 5 by a Sturm-Liouville model. In contrast with \cite{cattaneo1}, the point of view developed in this paper does not require any positive commutator techniques.

We shall use the notation $C^{1,\beta}(I)$ for the set of functions with first derivative $\beta$- H\"older continuous in $I$. Given a function of complex variable $F(z)$, we write $F(\lambda +i0)$ for $\lim_{\epsilon \downarrow 0}F(\lambda +i\epsilon)$. For the spectral family of orthogonal proyections of an operator $T$ we shall write $E_T$ 
and $\rho(T)$,  $\sigma (T)$,  $\sigma_{p}(T)$,  $\sigma_{sc}(T), \sigma_{ac}(T)$ denote the resolvent, the spectrum, the eigenvalues, the singular continuous and absolutely continuous spectra of $T$ respectively \cite{RSV1}. The characteristic function of a Borel set $\Delta $  will be denoted as usual by $\chi_{\Delta} (x)$ where $\chi_{\Delta} (x) =1 $ if $x\in \Delta$ and $0 $ if $x\notin \Delta$. $\mathbb{R}$ stands for the real numbers, $\mathbb R^+$  the non negative reals and $\mathbb{C}$ for  the complex numbers. $\Im z, \Re z$ stand for the imaginary and real parts of $z$.


\section{Almost exponential decay}

Theorem \ref{protothm} is the main result of this section and it is the first ingredient in the analysis developed in this paper. It provides some estimates on the Fourier transform of families of Lorentz-like functions defined on ${\mathbb R}$ by:
$$
\lambda \mapsto g(\lambda)\, \Im \left(\frac{1}{\lambda_{\kappa} -\lambda -\kappa^2 F(\lambda, \kappa)}\right) \, ,
$$
with $\kappa \in [-\kappa_0,\kappa_0]$ for some $\kappa_0 >0$ and under suitable assumptions set on the functions $g$, $F$ and the family of real numbers $(\lambda_{\kappa})_{\kappa \in [-\kappa_0,\kappa_0]}$. In the following, $g\in C_0^{\infty}({\mathbb R})$ is real-valued, compactly supported on $(a,b)$ for some $-\infty < a< b< \infty$, and we also assume that $0 \leq g\leq 1$ and $g\equiv 1$ on $[a_0 , b_0]$ for some $a< a_0 <b_0 <b$. In addition,
\begin{itemize}
\item[({\bf H0})]
\begin{itemize}
\item[(a)] $\lim_{\kappa \rightarrow 0}\lambda_{\kappa} =\lambda_0$, $\lambda_0 \in (a_0,b_0)$,
\item[(b)] the complex-valued function $F$ is bounded on $[a,b]\times [-\kappa_0,\kappa_0]$ and continuous at the point $(\lambda_0, 0)$,
\end{itemize}
\item[({\bf H1})] for any $\kappa \in [-\kappa_0,\kappa_0]$, the function $F(\cdot, \kappa)$ is $C^1$ on $[a,b]$ and
\begin{itemize}
\item[(a)] the function $F':= \partial_{\lambda} F$ is bounded on $[a,b]\times [-\kappa_0,\kappa_0]$,
\item[(b)] for any $\kappa \in [-\kappa _0,\kappa_0]$, the function $F(\cdot, \kappa)$ is $C^{1,\alpha}$ on $[a_0,b_0]$, uniformly in $\kappa \in[-\kappa_0,\kappa_0]$ for some $\alpha \in (0,1]$,
\end{itemize}
\item[({\bf H2})] for any $\kappa \in [-\kappa_0,\kappa_0]$, $\inf_{\lambda \in [a_0,b_0]} \Im F(\lambda, \kappa) >0$.
\end{itemize}

\noindent{\bf Remark}: In Assumption ({\bf H1}), $F_{\kappa}'$ is defined at $a$ and $b$ by taking the corresponding lateral derivatives. If the function $F$ is continuous in both variables on $[a,b]\times [-\kappa_0,\kappa_0]$, it is necessarily bounded. Note finally that if $\Im F(\lambda_0,0) >0$ and $F$ is continuous at the point $(\lambda_0,0)$, we can deduce the existence of an interval $[a_0,b_0]$, on which ({\bf H2}) holds for small values of $\kappa$. The condition $\Im F(\lambda_0,0) >0$ is known as the Fermi Golden Rule.

We have that:
\begin{theorem}\label{protothm} Assume (\,{\bf H0}), (\,{\bf H1}) and (\,{\bf H2}) hold. Then, given any $0 <\delta <\min (|\lambda_0 -a_0 |, |b_0- \lambda_0 |,1)$, and $\kappa \neq 0$ small enough,
\begin{itemize}
\item[(a)] there exists a unique solution in $[a_0,b_0]$ to the equation: $\lambda=\lambda_{\kappa}-\kappa^2 \Re F_{\kappa}(\lambda)$, denoted by $\lambda_{\kappa}^{\infty}$, which satisfies: $|\lambda_{\kappa}^{\infty} - \lambda_{\kappa}| \leq C \kappa^2$ for some $C>0$ and $a_0 +\delta \leq \lambda_{\kappa}^{\infty} \leq b_0 -\delta $,
\item[(b)] for all $t\in {\mathbb R}$,
\begin{equation}\label{quasiexpo}
\frac{1}{\pi}\, \int_{-\infty}^{\infty} d\lambda\,  e^{-i\lambda t}g(\lambda)\, \Im \left(\frac{1}{\lambda_{\kappa} -\lambda -\kappa^2 F(\lambda, \kappa)}\right) = c_{\kappa} \, e^{-i\zeta_{\kappa} |t|} +R(t,\kappa)
\end{equation}
where $c_{\kappa}^{-1} = 1+\kappa^2 F'(\lambda_{\kappa}^{\infty},\kappa),$
\begin{equation}\label{zetax}
\zeta_{\kappa} = \lambda_{\kappa}^{\infty} -i \kappa^2 c_{\kappa} \Im F(\lambda_{\kappa}^{\infty},\kappa)\, ,
\end{equation}
\end{itemize}
and the error term $R(t,\kappa)$ satisfies: $|R(t,\kappa)|\le C\kappa^{2}$ if $\alpha \in (0,1]$, $|t| |R(t,\kappa)|\le C\kappa^{2\alpha}$ if $\alpha \in (0,1)$ and
 $|t| |R(t,\kappa)|\le C\kappa^{2}|\ln | \kappa ||$ if $\alpha =1$.

\end{theorem}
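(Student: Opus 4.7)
My plan is to identify a tractable Lorentzian whose Fourier transform accounts for the main term $c_\kappa e^{-i\zeta_\kappa|t|}$ and to estimate the two natural corrections separately. For part (a), I would treat the equation as a fixed-point problem for $T_\kappa(\lambda) := \lambda_\kappa - \kappa^2 \Re F(\lambda, \kappa)$ on $[a_0, b_0]$. By ({\bf H1}a) this map is Lipschitz with constant $O(\kappa^2)$, and by ({\bf H0}) its range lies in $[a_0,b_0]$ for $\kappa$ small, so Banach's fixed-point theorem produces a unique $\lambda_\kappa^\infty$ in this interval. The identity $\lambda_\kappa^\infty - \lambda_\kappa = -\kappa^2 \Re F(\lambda_\kappa^\infty, \kappa)$ together with the boundedness of $F$ yields $|\lambda_\kappa^\infty - \lambda_\kappa| \le C \kappa^2$, which combined with ({\bf H0}a) forces the localization $\lambda_\kappa^\infty \in [a_0 + \delta, b_0 - \delta]$.

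For part (b), I would first Taylor-expand $F(\cdot, \kappa)$ at $\mu := \lambda_\kappa^\infty$ and use the defining relation $\lambda_\kappa - \mu = \kappa^2 \Re F(\mu,\kappa)$ to rewrite the denominator as
\begin{equation*}
\lambda_\kappa - \lambda - \kappa^2 F(\lambda, \kappa) = c_\kappa^{-1}(\zeta_\kappa - \lambda) + r(\lambda, \kappa),
\end{equation*}
where $c_\kappa^{-1} = 1 + \kappa^2 F'(\mu, \kappa)$ and, by the uniform $C^{1,\alpha}$ hypothesis ({\bf H1}b), the remainder satisfies $|r(\lambda, \kappa)| \le C \kappa^2 |\lambda - \mu|^{1+\alpha}$ on $[a_0, b_0]$. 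Writing the Lorentzian piece as $\Im (c_\kappa/(\zeta_\kappa - \lambda)) = (2i)^{-1}[c_\kappa(\zeta_\kappa - \lambda)^{-1} - \overline{c_\kappa}(\overline{\zeta_\kappa} - \lambda)^{-1}]$, and observing that $\zeta_\kappa$ lies strictly in the open lower half-plane (thanks to ({\bf H2}) together with $\Re c_\kappa = 1 + O(\kappa^2)$), I would apply the residue theorem to obtain
\begin{equation*}
\frac{1}{\pi}\int_{\RR} e^{-i\lambda t}\, \Im \frac{c_\kappa}{\zeta_\kappa - \lambda}\, d\lambda = c_\kappa e^{-i\zeta_\kappa |t|}
\end{equation*}
for $t \ge 0$; the $t < 0$ case gives the complex conjugate expression, which the reality of the original integrand forces one to absorb into $R$.

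The remainder $R(t, \kappa)$ then collects (i) the cut-off error $\int (g(\lambda)-1)\Im(c_\kappa/(\zeta_\kappa - \lambda))\, e^{-i\lambda t}\,d\lambda$, supported where $|\lambda - \mu| \ge \delta$ and hence pointwise $O(\kappa^2)$ and, after one integration by parts using $g \in C_0^\infty$, also of size $O(\kappa^2/|t|)$; and (ii) the denominator error from $r(\lambda, \kappa)$, which via a Neumann-type manipulation reduces to integrals of the form $\int \kappa^2 |\lambda - \mu|^{1+\alpha}/|\zeta_\kappa - \lambda|^4 \, d\lambda$ and is $O(\kappa^2)$ uniformly in $t$ by splitting at the scale $|\lambda - \mu| \sim \kappa^2$ dictated by $\Im \zeta_\kappa$. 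The hard part will be the $|t||R|$ bound: integration by parts transfers a derivative onto $r'$, which by ({\bf H1}b) is only $\alpha$-Hölder, leaving integrals like $\int_{|\lambda - \mu| \ge c \kappa^2} \kappa^2 |\lambda - \mu|^{\alpha - 1}\, d\lambda$. This yields $C\kappa^{2\alpha}$ for $\alpha \in (0, 1)$, but at $\alpha = 1$ the integral $\int_{\kappa^2}^{\delta} |\lambda - \mu|^{-1}\, d\lambda = |\ln | \kappa || + O(1)$ produces exactly the announced logarithmic correction. Tracking this borderline case carefully, in the spirit of \cite{cattaneo1}, is the analytic core of the theorem.
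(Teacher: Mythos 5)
Your overall strategy is the same as the paper's: a Banach fixed-point argument for (a); for (b), linearization of the denominator at $\lambda_\kappa^\infty$, so that $D=c_\kappa^{-1}(\zeta_\kappa-\lambda)+r$ with $|r|\le C\kappa^2|\lambda-\lambda_\kappa^\infty|^{1+\alpha}$, residue calculus for the Lorentzian main term, control of the remainder by splitting at the scale $|\Im\zeta_\kappa|\sim\kappa^2$, and integration by parts plus the H\"older regularity of $F'$ for the $|t|$-weighted bounds, including the borderline logarithm at $\alpha=1$. The differences are in execution, and two of your quantitative claims fail as written. First, the reduction of the denominator error gives $|G-G_1|\le C\kappa^2|\lambda-\lambda_\kappa^\infty|^{1+\alpha}/|\lambda-\zeta_\kappa|^{2}$ (from $1/D-1/D_1=(D_1-D)/(DD_1)$ and $|D|,|D_1|\gtrsim|\lambda-\zeta_\kappa|$), not the fourth power you wrote: the integral $\int\kappa^2|\lambda-\mu|^{1+\alpha}|\zeta_\kappa-\lambda|^{-4}d\lambda$ is of size $\kappa^{2\alpha-2}$, not $O(\kappa^2)$, so your step (ii) is false as stated, while with exponent $2$ it is exactly the paper's Corollary \ref{error1} via the scaling Lemma \ref{cases}. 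Similarly, after integration by parts the relevant integrand is $\kappa^2|\lambda-\mu|^{\alpha}|\lambda-\zeta_\kappa|^{-2}\sim\kappa^2|\lambda-\mu|^{\alpha-2}$ away from the $\kappa^2$-scale, not $\kappa^2|\lambda-\mu|^{\alpha-1}$ (which would give $O(\kappa^2)$, not the claimed $\kappa^{2\alpha}$); your $\alpha=1$ computation is the right one, but it corresponds to the exponent $\alpha-2$.

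Second, your main-term extraction integrates the exact Lorentzian over all of $\RR$ and treats $(g-1)$ as a cutoff error. Since $c_\kappa$ is in general complex, $\Im\bigl(c_\kappa/(\zeta_\kappa-\lambda)\bigr)$ decays only like $\kappa^2/|\lambda|$ at infinity, so the cutoff-error integral is not absolutely convergent and the full-line residue identity needs a regularization argument (at $t=0$ the principal value equals $\Re c_\kappa$, not $c_\kappa$; the discrepancy is $O(\kappa^2)$ but must be accounted for). The paper sidesteps this entirely by linearizing only on $[a_0,b_0]$ and closing the contour with a fixed curve $\gamma$ in the lower half-plane, where the linearized integrand is uniformly $O(\kappa^2)$; the boundary pieces $[a,a_0]\cup[b_0,b]$ are handled separately using $|D|\ge\delta_1$ there. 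Finally, for $t<0$ you cannot ``absorb the complex conjugate expression into $R$'': the difference between $c_\kappa e^{-i\zeta_\kappa|t|}$ and its conjugate is of size $O(1)\cdot e^{\Im\zeta_\kappa|t|}$, not $O(\kappa^2)$; the correct reduction is the symmetry ${\mathcal I}(-t,\kappa)=\overline{{\mathcal I}(t,\kappa)}$, which is how the paper restricts to $t\ge0$. With these corrections your argument becomes essentially the paper's proof.
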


\begin{remark} By combining (\ref{asympt}) and Hypothesis ({\bf H0}) in Theorem \ref{protothm}, we also deduce that: $\lim_{\kappa \rightarrow 0} \Re \zeta_{\kappa} = \lambda_0$ and
$$\lim_{\kappa \rightarrow 0} \frac{1}{\kappa^2} \Im \zeta_{\kappa} = - \Im F(\lambda_0,0) < 0 \, .$$
 In particular we obtain using \eqref{zetax} that
 \begin{gather}\label{asympt}
\lim_{\kappa \rightarrow 0} \frac{\lambda_{\kappa}^{\infty} - \Re \zeta_{\kappa}}{\kappa^2 \Im F(\lambda_\kappa ^{\infty},\kappa)} = 1 \quad \text{and}\quad \lim_{\kappa \rightarrow 0} \frac{\Im \zeta_{\kappa} }{\kappa^2 \Im F(\lambda_\kappa ^{\infty},\kappa)} = -1\, .
\end{gather}

\end{remark}

\begin{remark} The model for the integral described in Theorem \ref{protothm} is the Fourier transform of Lorentzian functions. Let $\mu \in {\mathbb R}$ and $\Gamma > 0$. Then for any $\lambda \in {\mathbb R}$,
$$\Im (\frac{1}{\mu -\lambda -i\Gamma}) = \frac{\Gamma}{(\lambda-\mu)^2 + \Gamma^2}$$
and for any $t\in {\mathbb R}$,
$$\frac{1}{\pi}\, \int_{-\infty}^{\infty} d\lambda\,  e^{-i\lambda t}\,\Im \left(\frac{1}{\mu -\lambda - i\Gamma}\right) = e^{-i(\mu-i\Gamma) |t|} \, ,$$
which decays exponentially at infinity. This observation is one of the key to the proof of Theorem \ref{protothm}.
\end{remark}

The strategy for the proof of Theorem \ref{protothm} follows essentially \cite{cattaneo1}. The fixed point argument has been borrowed to \cite{king}.

\subsection{Proof of Theorem \ref{protothm}}

For simplicity, let us write for any $(\lambda,\kappa) \in [a,b]\times [-\kappa_0,\kappa_0]$, $D(\lambda,\kappa) = \lambda_{\kappa} -\lambda -\kappa^2F(\lambda,\kappa)$. Due to Hypothesis ({\bf H2}), we have that for any $\lambda \in [a_0,b_0]$, $0<|\kappa |\leq \kappa_0$, $|D(\lambda,\kappa) | \geq \kappa^2 \inf_{\lambda \in [a_0,b_0]} \Im F_{\kappa}(\lambda) > 0$. Now, fix $\delta_1 \in (0, \min (|\lambda_0 -a_0 |, |b_0- \lambda_0 |,1))$. According to Hypotheses ({\bf H0}) and ({\bf H1})(a), we can pick $0 <\kappa_1 \leq \kappa_0$ such that:
\begin{itemize}
\item Ran $(\lambda_{\kappa} - \kappa^2 \Re F(\cdot , \kappa)) \subset [a_0+\delta_1, b_0-\delta_1]$, for any $|\kappa |\leq \kappa_1$,
\item $\kappa_1^2 \sup_{(\lambda,\kappa) \in [a_0,b_0]\times [-\kappa_0,\kappa_0]} |F'(\lambda, \kappa)| \leq 1-\delta_1$.
\end{itemize}
In particular, for any $\lambda \in [a,a_0]\cup [b_0,b]$, $|\kappa |\leq \kappa_1$, $|D(\lambda,\kappa) |\geq |\lambda_{\kappa} -\lambda -\kappa^2 \Re F_{\kappa}(\lambda)| \geq \delta_1$. This allows us to define the function $G$ for $\lambda \in [a,b]$ and $0 < |\kappa |\leq \kappa_1$ by: $G(\lambda,\kappa)=D(\lambda,\kappa)^{-1}$ and then to give a sense to the integral
\begin{equation}\label{quantity}
{\mathcal I}(t,\kappa) = \frac{1}{\pi}\, \int_{-\infty}^{\infty} d\lambda\,  e^{-i\lambda t}g(\lambda)\, \Im \left(\frac{1}{\lambda_{\kappa} -\lambda -\kappa^2 F(\lambda, \kappa)}\right) \, ,
\end{equation}
for $t\in {\mathbb R}$ and $0< |\kappa |\leq \kappa_1$. Since ${\mathcal I}(-t,\kappa)=\overline {{\mathcal I}(t,\kappa)}$, it is enough to prove Theorem \ref{protothm} for $t\geq 0$ and $0< |\kappa |\leq \kappa_1$. The proof consists in adding and subtracting the Lorentz-like function $\Im G_1$ (which is explicited later on) on the interval $[a_0, b_0]$ and reduces the problem to study
$$
{\mathcal I}_1(t,\kappa):= \frac{1}{\pi}\int_{a_0}^{b_0} d\lambda \, e^{-i\lambda t} \Im \,G_1(\lambda, \kappa) \, .
$$
This integral contributes to the quasi-exponential behaviour term in (\ref{quasiexpo}) while the remainder terms contribute to the error term $R$.

Given $\delta_1$ as before and $|\kappa |\leq \kappa_1$, we define first by a fixed point argument the real number $\lambda_{\kappa}^{\infty}$ for $\kappa \in [-\kappa_1,\kappa_1]$:
\begin{lemma}\label{fixed} Given any $\kappa\in [-\kappa_1,\kappa_1]$, there is a unique solution to the equation: $\lambda=\lambda_{\kappa}-\kappa^2 \Re F(\lambda,\kappa)$ in $[a_0,b_0]$. Actually, if $\lambda_{\kappa}^{\infty}$ denotes this solution, we have that: $\lambda_{\kappa}^{\infty}\in [a_0+\delta_1,b_0 -\delta_1]$ and $|\lambda_{\kappa}^{\infty} -\lambda_{\kappa} | \leq \kappa^2 \sup_{(\lambda,\kappa) \in [a_0,b_0]\times [-\kappa_0,\kappa_0]} |F(\lambda,\kappa)|$.
\end{lemma}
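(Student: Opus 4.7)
The proof is essentially a direct application of the Banach fixed point theorem, since the two bullet points established just before the statement of the lemma already encode exactly what is needed. My plan is to define, for each fixed $\kappa \in [-\kappa_1,\kappa_1]$, the map
\begin{equation*}
T_{\kappa}: [a_0,b_0] \to {\mathbb R}, \qquad T_{\kappa}(\lambda) = \lambda_{\kappa} - \kappa^2 \Re F(\lambda, \kappa),
\end{equation*}
and to show that $T_{\kappa}$ is a contraction of $[a_0,b_0]$ into itself whose unique fixed point lies in the smaller interval $[a_0+\delta_1, b_0-\delta_1]$.

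First I would use the first bullet (Ran$(\lambda_{\kappa} - \kappa^2 \Re F(\cdot,\kappa)) \subset [a_0+\delta_1, b_0-\delta_1]$) to conclude that $T_{\kappa}$ sends $[a_0,b_0]$ into the compact subinterval $[a_0+\delta_1, b_0-\delta_1]$, in particular into $[a_0,b_0]$ itself. Next, invoking Hypothesis (\textbf{H1})(a) and the mean value theorem, for $\lambda,\mu \in [a_0,b_0]$,
\begin{equation*}
|T_{\kappa}(\lambda) - T_{\kappa}(\mu)| \le \kappa^2 \sup_{(\nu,\kappa')\in [a_0,b_0]\times[-\kappa_0,\kappa_0]} |F'(\nu,\kappa')|\, |\lambda-\mu| \le (1-\delta_1)|\lambda -\mu|
\end{equation*}
by the second bullet, so $T_{\kappa}$ is a strict contraction with Lipschitz constant $1-\delta_1 < 1$. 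The Banach fixed point theorem then yields a unique $\lambda_{\kappa}^{\infty} \in [a_0,b_0]$ solving $T_{\kappa}(\lambda_{\kappa}^{\infty}) = \lambda_{\kappa}^{\infty}$, and since $\lambda_{\kappa}^{\infty} = T_{\kappa}(\lambda_{\kappa}^{\infty}) \in \mathrm{Ran}(T_{\kappa}) \subset [a_0+\delta_1,b_0-\delta_1]$, the localization claim follows automatically.

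Finally, the quantitative bound on $|\lambda_{\kappa}^{\infty} - \lambda_{\kappa}|$ is obtained by reading off the fixed-point equation itself:
\begin{equation*}
|\lambda_{\kappa}^{\infty} - \lambda_{\kappa}| = \kappa^2 |\Re F(\lambda_{\kappa}^{\infty}, \kappa)| \le \kappa^2 \sup_{(\lambda,\kappa) \in [a_0,b_0]\times [-\kappa_0,\kappa_0]} |F(\lambda,\kappa)|,
\end{equation*}
where the supremum is finite by Hypothesis (\textbf{H0})(b). There is no real obstacle in this proof — the only subtlety was already handled in the paragraph preceding the lemma, where $\kappa_1$ was chosen precisely so that the image-containment and contraction estimates hold simultaneously; the choice of $\delta_1 < 1$ is what guarantees the Lipschitz constant $1-\delta_1$ is strictly less than one.
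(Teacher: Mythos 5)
Your proof is correct and follows essentially the same route as the paper: the two preparatory bullets (range containment and the smallness of $\kappa_1^2 \sup |F'|$) make the map $\lambda \mapsto \lambda_{\kappa} - \kappa^2 \Re F(\lambda,\kappa)$ a strict contraction of $[a_0,b_0]$ into $[a_0+\delta_1,b_0-\delta_1]$, and the Banach fixed point theorem yields the unique solution with the stated localization. Your explicit derivation of the bound $|\lambda_{\kappa}^{\infty}-\lambda_{\kappa}| \le \kappa^2 \sup |F|$ from the fixed-point equation is exactly the step the paper leaves implicit in ``the conclusions follow.''
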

\begin{proof} By hypothesis, for any $|\kappa |\leq \kappa_1$, $\kappa^2 \sup_{(\lambda,\kappa) \in [a_0,b_0]\times [-\kappa_0,\kappa_0]} |F'(\lambda,\kappa)| <1$ and Ran $(\lambda_{\kappa} - \kappa^2 \Re F(\cdot , \kappa)) \subset [a_0+\delta_1, b_0-\delta_1]$. Therefore, given such $\kappa\in [-\kappa_1,\kappa_1]$, the function $\lambda \mapsto \lambda_{\kappa} - \kappa^2\, \Re F(\lambda,\kappa)$ maps $[a_0,b_0]$ (resp. $[a_0+\delta_1,b_0-\delta_1]$) on itself and is strictly contractive. So, we apply the Banach fixed point theorem, and the conclusions follow.
\end{proof}

This proves statement (a) of Theorem \ref{protothm}. Now, let us define for any $\lambda \in [a_0,b_0]$, $|\kappa |\leq \kappa_1,$
\begin{align*}
\widehat{D}(\lambda,\kappa) &=\lambda_{\kappa} -\lambda -\kappa^2 F(\lambda_{\kappa}^{\infty},\kappa) = \lambda_{\kappa}^{\infty} -\lambda -i\kappa^2 \Im F(\lambda_{\kappa}^{\infty},\kappa)\\
D_1(\lambda,\kappa)& = \lambda_{\kappa} -\lambda -\kappa^2 F(\lambda_{\kappa}^{\infty},\kappa)-\kappa^2 F'(\lambda_{\kappa}^{\infty},\kappa)(\lambda-\lambda_{\kappa}^{\infty})\\
&= \lambda_{\kappa}^{\infty} -\lambda -i\kappa^2 F(\lambda_{\kappa}^{\infty},\kappa)-\kappa^2 F'(\lambda_{\kappa}^{\infty},\kappa)(\lambda-\lambda_{\kappa}^{\infty})
\end{align*}
Note that by Hypothesis ({\bf H2}), $\Im \widehat{D}(\lambda, \kappa) = -\kappa^2 \Im F(\lambda_{\kappa}^{\infty},\kappa) <0$ as soon as $\kappa \neq 0$. This allows us to define the function $\widehat{G}$ for $\lambda \in [a_0,b_0]$, $0< |\kappa |\leq \kappa_1$ by $\widehat{G}(\lambda,\kappa)=\widehat{D}(\lambda,\kappa)^{-1}$.
For $\lambda \in [a_0,b_0]$ and $|\kappa |\leq \kappa_1$, we also have that:
\begin{align}\label{D1Dhat}
|D_1(\lambda,\kappa)| &\ge |\lambda_{\kappa}^{\infty} -\lambda -i\kappa^2 F(\lambda_{\kappa}^{\infty},\kappa)| -\kappa^2 \left(\sup_{(\lambda,\kappa)\in [a_0,b_0]\times [-\kappa_0,\kappa_0]}|F'(\lambda,\kappa)| \right) |\lambda_{\kappa}^{\infty} -\lambda | \nonumber\\
&\ge \left(1-\kappa^2 \sup_{(\lambda,\kappa)\in [a_0,b_0]\times [-\kappa_0,\kappa_0]} |F'(\lambda,\kappa)|\right) |\widehat{D}(\lambda,\kappa)|\ge \delta_1 |\widehat{D}(\lambda,\kappa)| \, .
\end{align}
In particular, for $\lambda \in [a_0,b_0]$ and $0< |\kappa |\leq \kappa_1$, we define the function $G_1$ by $G_1(\lambda,\kappa)=D_1(\lambda,\kappa)^{-1}$ and it holds:
$$
|G_1(\lambda,\kappa)| \le \delta_1^{-1} |\widehat{G}(\lambda,\kappa)| \, .
$$
Finally, for $(\lambda,\kappa) \in [a_0,b_0]\times [-\kappa_0,\kappa_0]$, $D(\lambda,\kappa)= \lambda_{\kappa}^{\infty} - \lambda -i\kappa^2 \Im F(\lambda_{\kappa}^{\infty},\kappa) -\kappa^2(F(\lambda,\kappa)-F(\lambda_{\kappa}^{\infty},\kappa))$. Hence, for $\lambda \in [a_0,b_0]$ and $|\kappa |\le \kappa_1$, we obtain via the Mean Value Theorem that:
\begin{align}\label{DDhat}
|D(\lambda,\kappa)| &\ge |\lambda_{\kappa}^{\infty} -\lambda -i\kappa^2 F(\lambda_{\kappa}^{\infty},\kappa)| -\kappa^2 \left(\sup_{(\lambda,\kappa)\in [a_0,b_0]\times [-\kappa_0,\kappa_0]}|F'(\lambda,\kappa)| \right) |\lambda_{\kappa}^{\infty} -\lambda | \nonumber\\
&\ge \left(1-\kappa^2 \sup_{(\lambda,\kappa)\in [a_0,b_0]\times [-\kappa_0,\kappa_0]} |F'(\lambda,\kappa)|\right) |\widehat{D}(\lambda,\kappa)|\ge \delta_1 |\widehat{D}(\lambda,\kappa)| \, .
\end{align}
In particular, for $\lambda \in [a_0,b_0]$ and $0< |\kappa |\leq \kappa_1$, it holds:
$$
|G(\lambda,\kappa)| \le \delta_1^{-1} |\widehat{G}(\lambda,\kappa)| \, .
$$

Recall that the function $g$ vanishes outside $[a,b]$ and $g\equiv 1$ on $[a_0,b_0]$. So, we can write for $t\in [0,\infty)$, $0<|\kappa |\leq \kappa_1$, ${\mathcal I}(t,\kappa) = {\mathcal I}_1(t,\kappa) + {\mathcal I}_2(t,\kappa) + {\mathcal I}_{\partial}(t,\kappa)$ where
\begin{align*}
{\mathcal I}_2(t,\kappa) &:= \frac{1}{\pi}\int_{a_0}^{b_0} d\lambda \, e^{-i\lambda t}\Im\,(G(\lambda, \kappa)-G_1(\lambda, \kappa)) \, ,\\
{\mathcal I}_{\partial}(t,\kappa) &:= \frac{1}{\pi}\int_a^{a_0} d\lambda \, e^{-i\lambda t}g(\lambda)\Im\,G(\lambda, \kappa) + \frac{1}{\pi}\int_{b_0}^b d\lambda \, e^{-i\lambda t}g(\lambda)\Im\, G(\lambda, \kappa) \, .
\end{align*}
It remains to perform the analysis of each term.

{\bf Step 1.} We start with the term ${\mathcal I}_1$. We write for $\lambda \in [a_0,b_0]$, $0< |\kappa |\leq \kappa_1,$
$$
\Im\, G_1 (\lambda,\kappa) =\frac{1}{2i}\left( \frac{1}{b_\kappa - a_\kappa \lambda}-\frac{1}{\overline{b_\kappa} -\overline{a_\kappa} \lambda}\right),
$$
where
$a_\kappa :=1+\kappa ^2 F'(\lambda_\kappa ^{\infty},\kappa)$ and $b_{\kappa} := \lambda_{\kappa}^{\infty} a_{\kappa} -i\kappa^2 \Im F_{\kappa} (\lambda_{\kappa}^{\infty})= \lambda_{\kappa}+\kappa^2 (\lambda_{\kappa}^{\infty} F'(\lambda_\kappa ^{\infty},\kappa)) -F(\lambda_\kappa ^{\infty},\kappa))$. Note that $\delta_1 \leq 1- \kappa^2 \sup_{(\lambda,\kappa) \in [a_0,b_0]\times [-\kappa_0,\kappa_0]} |F'(\lambda,\kappa)| \leq | a_{\kappa}|$ for any $|\kappa |\leq \kappa_1$. Now, for $0< |\kappa |\leq \kappa_1$, we consider the function $g_1(\cdot, \kappa)$ defined by
\begin{equation}\label{functionf}
g_1(z,\kappa)= \frac{1}{2i}\left(\frac{1}{b_\kappa -a_\kappa z}-\frac{1}{\overline{b_\kappa} -\overline{a_\kappa}z}\right),
\end{equation}
which is meromorphic in the complex plane, with poles at $\zeta_\kappa$ and $\overline{\zeta_\kappa}$:
$$
{\zeta_\kappa}= \lambda_\kappa ^{\infty} -i\kappa ^2 \, \frac{\Im F(\lambda_\kappa ^{\infty},\kappa)}{a_{\kappa}} \, .
$$
In particular, for $\lambda \in [a_0,b_0]$, $0< |\kappa |\leq \kappa_1$, $g_1(\lambda, \kappa) = \Im G_1(\lambda, \kappa)$. Note that for any $0< |\kappa |\leq \kappa_1,$
\begin{gather}\label{bounddist}
\left. \begin{array}{r} |\lambda_{\kappa} ^{\infty} - \Re \zeta_{\kappa}| \\
|\Im \zeta_{\kappa}|
\end{array} \right\} \leq \kappa^2 \, \delta_1^{-1} \Im F(\lambda_\kappa ^{\infty},\kappa) \leq \kappa^2 \, \delta_1^{-1} \sup_{(\lambda,\kappa) \in [a_0,b_0]\times [-\kappa_0,\kappa_0]} |F'(\lambda,\kappa)| \, .
\end{gather}
Explicit calculations also yield:
$$\Im \zeta_{\kappa} = -\kappa^2 (1+\kappa^2 \Re F'(\lambda_{\kappa} ^{\infty},\kappa)) \frac{\Im F(\lambda_{\kappa} ^{\infty},\kappa)}{|a_{\kappa}|^2}\, .$$
Once observed that for $0< |\kappa |\leq \kappa_1$, $\delta_1 \leq 1- \kappa^2 \sup_{(\lambda,\kappa) \in [a_0,b_0]\times [-\kappa_0,\kappa_0]} |F'(\lambda,\kappa)| \leq 1+\kappa^2 \Re F'(\lambda_{\kappa} ^{\infty},\kappa)$, we also deduce that:
\begin{equation*}
\Im \zeta_{\kappa} \leq -\kappa^2 \delta_1 \frac{\Im F(\lambda_{\kappa} ^{\infty},\kappa)}{(1+\kappa_1^2 \sup_{(\lambda,\kappa) \in [a_0,b_0]\times [-\kappa_0,\kappa_0]} |F'(\lambda,\kappa)|)^2} < 0 \, .
\end{equation*}
In other words, for any $0< |\kappa | \leq \kappa_1$, the pole $\zeta_\kappa$ lies in the lower half-plane and:
$$-\kappa^2 \, \delta_1^{-1} \Im F(\lambda_\kappa ^{\infty},\kappa) \leq \Im \zeta_{\kappa} \leq - \kappa^2 \delta_1 \frac{\Im F(\lambda_{\kappa} ^{\infty},\kappa)}{(1+\kappa_1^2 \sup_{(\lambda,\kappa) \in [a_0,b_0]\times [-\kappa_0,\kappa_0]} |F'(\lambda,\kappa)|)^2} < 0 \, .
$$

Now, fix $0< \delta_1' <\delta_1$ and $0< \kappa_1' \leq \kappa_1$ such that: $\kappa_1'^2 \, \sup_{(\lambda,\kappa) \in [a_0,b_0]\times [-\kappa_0,\kappa_0]} |F'(\lambda,\kappa)| \leq \delta_1' \delta_1$. In view of the bound (\ref{bounddist}) and the fact that $\lambda_{\kappa}^{\infty} \in [a_0 +\delta_1, b_0 -\delta_1]$ for any $0< |\kappa |\leq \kappa_1$, this implies that for any $0< |\kappa | \leq \kappa_1'$, $\Re \zeta_{\kappa} \in [a_0 +(\delta_1-\delta_1'), b_0 -(\delta_1-\delta_1')]$. Let $\gamma$ be a fixed smooth curve in the lower half plane, joining the endpoints of the interval $[a_0,b_0]$ and staying at positive distance from the closure of the bounded sets $\{\zeta_{\kappa}; 0< |\kappa |\leq \kappa_1' \}$ and $\{\overline{\zeta_{\kappa}}; 0< |\kappa |\leq \kappa_1' \}$. Then, for $0< |\kappa |\leq \kappa_1'$, the closed curve $J\cup\gamma$ enclose only the pole $\zeta_\kappa$ and so,
\begin{equation}\label{cauchy}
\frac{1}{\pi }\oint_{J\cup \gamma^-} e^{-izt} g_1(z,\kappa)\, dz =c_\kappa e^{-i\zeta_\kappa t}\, ,
\end{equation}
with $c_{\kappa} = a_{\kappa}^{-1}$. Therefore,
\begin{equation}\label{intG1}
{\mathcal I}_1(t,\kappa) = \frac{1}{\pi} \int_{a_0}^{b_0} e^{-i\lambda t} \Im\, G_1(\lambda,\kappa)\, d\lambda = c_\kappa \, e^{-i\zeta_\kappa t}+\frac{1}{\pi }\int_{\gamma} e^{-izt} g_1(z,\kappa)\, dz
\end{equation}
Now, for all $z\in \gamma$, $g_1(z,\kappa) =\kappa^2 h_1(z,\kappa)$ where
$$
h_1(z,\kappa)= \frac{p_{\kappa}z+q_{\kappa}}{|a_{\kappa}|^2(z-\zeta_\kappa)(z-\overline{\zeta_\kappa})}\, ,
$$
with $p_{\kappa}=\Im\, F'(\lambda_\kappa ^{\infty},\kappa)$ and $q_{\kappa}=\Im\, F(\lambda_\kappa ^{\infty},\kappa)-\lambda_\kappa ^{\infty}\Im\, F'(\lambda_\kappa ^{\infty},\kappa)$.

By construction, $\inf_{z\in \gamma, 0< |\kappa |\leq \kappa_1'} |z-\zeta_\kappa | >0$ and $\inf_{z\in \gamma, 0< |\kappa |\leq \kappa_1'} |z-\overline{\zeta_\kappa} | >0$, so the functions $h_1(\cdot ,\kappa)$ are analytic in some fixed open region containing $\gamma$ for any $0< |\kappa |\leq \kappa_1'$. Once combined with Hypotheses ({\bf H0})(b) and ({\bf H1})(a), this implies that $\sup_{z\in \gamma, 0< |\kappa |\leq \kappa_1'} |h_1(z,\kappa) |< \infty$ and $\sup_{z\in \gamma, 0< |\kappa |\leq \kappa_1'} |h_1'(z,\kappa) | <\infty$.

Now, note that for any $t\geq 0$ and any $z\in \gamma$, $| e^{-iz t}|\le 1$, since the curve $\gamma$ is contained in the lower half plane. We have that for any $t\geq 0$ and $0< |\kappa |\leq \kappa_1',$
\begin{align}
\left| \int_{\gamma} e^{-izt} g_1(z,\kappa)\, dz \right| & \leq C \kappa^2 \, , \label{intgamma}\\
\left| \int_{\gamma} e^{-izt} g_1'(z,\kappa)\, dz \right| & \leq C\kappa^2 \, .\label{tintgamma}
\end{align}
for some $C>0$.

{\bf Step 2.} In order to conclude, first define for $t\geq 0$ and $0< |\kappa |\leq \kappa_1',$
\begin{align}\label{RII}
R(t,\kappa) &:= {\mathcal I}(t,\kappa) - c_{\kappa} \, e^{-i\zeta_{\kappa} |t|} = ({\mathcal I}_1(t,\kappa) - c_{\kappa}\, e^{-i\zeta_{\kappa} |t|})+ {\mathcal I}_2(t,\kappa) + {\mathcal I}_{\partial}(t,\kappa)\\
&= \frac{1}{\pi }\int_{\gamma} e^{-izt} g_1(z,\kappa)\, dz + {\mathcal I}_2(t,\kappa) + {\mathcal I}_{\partial}(t,\kappa)\nonumber \, ,
\end{align}
due to (\ref{intG1}). According to (\ref{intgamma}), Corollary \ref{error1} and Proposition \ref{bords}, all the terms on the RHS are of order $\kappa^2$ (for $\kappa$ small enough), which yields our first estimate on $R$.

Now, note that integration by parts yields for $t\geq 0$ and $0< |\kappa |\leq \kappa_1',$
\begin{align*}
\int_{\gamma}  e^{-iz t} g_1'(z,\kappa)\, dz &= it\int_{\gamma} e^{-iz t} g_1(z,\kappa)\, dz + e^{-ib_0 t} \Im G_1(b_0,\kappa) - e^{-ia_0 t} \Im G_1(a_0,\kappa) \, , \\
\int_{a_0}^{b_0} d\lambda \, e^{-i\lambda t}\Im\,(G'(\lambda, \kappa)-G_1'(\lambda, \kappa)) &= it \int_{a_0}^{b_0} d\lambda \, e^{-i\lambda t}\Im\,(G(\lambda, \kappa)-G_1(\lambda, \kappa)) \\
&+ e^{-ib_0 t} \Im\,(G(b_0, \kappa)-G_1(b_0, \kappa)) \\
&- e^{-ia_0 t}\Im\,(G(a_0, \kappa)-G_1(a_0, \kappa))\\
\int_a^{a_0} d\lambda \, e^{-i\lambda t} (g(\lambda)\Im\,G(\lambda, \kappa))'  &= it \int_a^{a_0} d\lambda \, e^{-i\lambda t}g(\lambda)\Im\,G(\lambda, \kappa) + e^{-ia_0 t}\Im\,G(a_0, \kappa) \, , \\
\int_{b_0}^b d\lambda \, e^{-i\lambda t} (g(\lambda)\Im\, G(\lambda, \kappa))'  &= it \int_{b_0}^b d\lambda \, e^{-i\lambda t}g(\lambda)\Im\, G(\lambda, \kappa) - e^{-ib_0 t}\Im\,G(b_0, \kappa) \, ,
\end{align*}
where we have used $g(a)=0=g(b)$ and $g(a_0)=1=g(b_0)$. It follows from (\ref{RII}) that for $t\geq 0$ and $0< |\kappa |\leq \kappa_1',$
\begin{equation}\label{RJJ}
it R(t,\kappa) = {\mathcal J}_1(t,\kappa) + {\mathcal J}_2(t,\kappa) + {\mathcal J}_{\partial}(t,\kappa) \, ,
\end{equation}
where
\begin{align*}
{\mathcal J}_1(t,\kappa) &= \frac{1}{\pi } \int_{\gamma}  e^{-iz t} g_1'(z,\kappa)\, dz\\
{\mathcal J}_2(t,\kappa) &= \frac{1}{\pi } \int_{a_0}^{b_0} d\lambda \, e^{-i\lambda t}\Im\,(G'(\lambda, \kappa)-G_1'(\lambda, \kappa)) \\
{\mathcal J}_{\partial}(t,\kappa) &= \frac{1}{\pi} \left(\int_a^{a_0} d\lambda \, e^{-i\lambda t} (g(\lambda)\Im\,G(\lambda, \kappa))' + \int_{b_0}^b d\lambda \, e^{-i\lambda t} (g(\lambda)\Im\, G(\lambda, \kappa))' \right) \, .
\end{align*}
According to (\ref{tintgamma}) and Proposition \ref{bords}, the first and third terms on the RHS of (\ref{RJJ}) are of order $\kappa^2$. In view of Corollary \ref{error2}, the second one is of order $\kappa^{2\alpha}$ (resp. $\kappa^2 |\log |\kappa ||$) if $\alpha \in (0,1)$ (resp. if $\alpha=1$). This completes the proof of statement (b).

The last part of Theorem \ref{protothm} is a direct consequence of formula (\ref{zetax}).

\subsection{Technicalities}

In this section, the results are stated under Hypotheses ({\bf H0}), ({\bf H1}) and ({\bf H2}). The quantities $\delta_1 \in (0, \min (|\lambda_0 -a_0 |, |b_0- \lambda_0 |,1))$ and $0 <\kappa_1 \leq \kappa_0$ are fixed according to conditions explicited in the proof of Theorem \ref{protothm}.

First, we provide upper bounds on the terms ${\mathcal I}_{\partial}$ and ${\mathcal J}_{\partial}$:
\begin{pro}\label{bords} There exists $C>0$ such that for all $t\in {\mathbb R}$, $0< |\kappa |\leq \kappa_1$, $|{\mathcal I}_{\partial} (t,\kappa) | \leq C \kappa^2$ and $|{\mathcal J}_{\partial} (t,\kappa) | \leq C \kappa^2$.
\end{pro}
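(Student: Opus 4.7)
The plan is to show that both integrands, as well as their derivatives, are of order $\kappa^{2}$ on the complement of $[a_{0},b_{0}]$, so that the estimates $|\mathcal{I}_{\partial}|,|\mathcal{J}_{\partial}|\le C\kappa^{2}$ follow by taking absolute values inside the integrals (the factor $|e^{-i\lambda t}|=1$ for real $t$ plays no role).

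The starting observation, already established in the proof of Theorem~\ref{protothm}, is the lower bound $|D(\lambda,\kappa)|\ge \delta_{1}$ on $[a,a_{0}]\cup[b_{0},b]$ for $|\kappa|\le\kappa_{1}$, which came from the choice of $\kappa_{1}$ forcing the range of $\lambda_{\kappa}-\kappa^{2}\Re F(\cdot,\kappa)$ to lie in $[a_{0}+\delta_{1},b_{0}-\delta_{1}]$. In particular $G(\lambda,\kappa)=1/D(\lambda,\kappa)$ is bounded there. The key point is the identity
\begin{equation*}
\Im G(\lambda,\kappa)\;=\;\frac{-\Im D(\lambda,\kappa)}{|D(\lambda,\kappa)|^{2}}\;=\;\frac{\kappa^{2}\Im F(\lambda,\kappa)}{|D(\lambda,\kappa)|^{2}},
\end{equation*}
which, combined with Hypothesis~({\bf H0})(b) (boundedness of $F$) and $|D|\ge\delta_{1}$, yields $|\Im G(\lambda,\kappa)|\le C\kappa^{2}$ on $[a,a_{0}]\cup[b_{0},b]$. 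Since $g$ is bounded and the intervals of integration have length at most $b-a$, this immediately gives $|\mathcal{I}_{\partial}(t,\kappa)|\le C\kappa^{2}$.

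For $\mathcal{J}_{\partial}$ the ingredient to add is a matching $O(\kappa^{2})$ bound on $(\Im G)'$. Differentiating the formula above,
\begin{equation*}
(\Im G)'(\lambda,\kappa)\;=\;\kappa^{2}\,\frac{\Im F'(\lambda,\kappa)}{|D(\lambda,\kappa)|^{2}}\;-\;2\kappa^{2}\,\Im F(\lambda,\kappa)\,\frac{\Re\bigl(\overline{D(\lambda,\kappa)}\,D'(\lambda,\kappa)\bigr)}{|D(\lambda,\kappa)|^{4}},
\end{equation*}
and using Hypothesis~({\bf H1})(a) (boundedness of $F'$, hence of $D'=-1-\kappa^{2}F'$) together with $|D|\ge\delta_{1}$, both terms are bounded by $C\kappa^{2}$ uniformly in $\lambda\in[a,a_{0}]\cup[b_{0},b]$ and $|\kappa|\le\kappa_{1}$. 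Since $(g\,\Im G)'=g'\,\Im G+g\,(\Im G)'$ and $g,g'$ are bounded, we obtain $|(g\,\Im G)'|\le C\kappa^{2}$ on these intervals, so integrating gives $|\mathcal{J}_{\partial}(t,\kappa)|\le C\kappa^{2}$.

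There is no real obstacle here: the estimate is driven entirely by the explicit factor $\kappa^{2}$ that appears in $\Im D$ and by the uniform lower bound $|D|\ge\delta_{1}$ away from $[a_{0},b_{0}]$. The only point that requires a line of care is the derivative bound, where one must notice that, despite $D'=-1-\kappa^{2}F'$ being $O(1)$, the imaginary part $(\Im G)'$ still carries a $\kappa^{2}$ prefactor because $\Im D$ and $\Im F$ do. This is the mild subtlety the argument relies on.
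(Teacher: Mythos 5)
Your proof is correct and follows essentially the same route as the paper: both rest on the identity $\Im G=\kappa^{2}\,\Im F/|D|^{2}$, the lower bound on $|D|$ away from $[a_{0},b_{0}]$, the boundedness of $F$ and $F'$ from ({\bf H0})(b) and ({\bf H1})(a), and the decomposition $(g\,\Im G)'=g'\,\Im G+g\,(\Im G)'$ with the same formula for $(\Im G)'$. The only cosmetic difference is that you use the uniform bound $|D|\ge\delta_{1}$, while the paper uses the pointwise bound $|D(\lambda,\kappa)|\ge a_{0}+\delta_{1}-\lambda$ and integrates the resulting explicit powers; both yield the same $O(\kappa^{2})$ estimate.
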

\noindent{\bf Proof:} We deduce from Lemma \ref{fixed} that for all $\lambda \in [a,a_0]$, $|\kappa |\leq \kappa_1$, $|D(\lambda,\kappa)| \geq a_0 +\delta_1 -\lambda \geq \delta_1 >0$. On the other hand, for $\lambda \in [a,a_0]$, $0< |\kappa |\leq \kappa_1,$
\begin{equation}\label{ImG}
\Im\,G(\lambda, \kappa) = \kappa^2 \frac{\Im F(\lambda,\kappa)}{|D(\lambda,\kappa) |^2}
\end{equation}
In view of ({\bf H0})(b), we deduce that for $t\in {\mathbb R}$, $0< |\kappa |\leq \kappa_1,$
\begin{equation*}
\left| \int_a^{a_0} \, d\lambda \, e^{-i\lambda t}g(\lambda)\Im\,G(\lambda, \kappa) \right| \leq \kappa^2 \sup_{\lambda \in [a,a_0]}|\Im F(\lambda,\kappa)| \int_a^{a_0} \frac{d\lambda}{(a_0 +\delta_1 -\lambda)^2} \leq C \kappa^2 \, .
\end{equation*}
For all $\lambda \in [a,a_0]$, $0< |\kappa |\leq \kappa_1$, $(g \Im\,G)'= g' \Im\,G+g \Im\,G'$ and we deduce from (\ref{ImG}) that
\begin{equation*}
\Im\,G'(\lambda, \kappa) = \kappa^2 \frac{\Im F'(\lambda,\kappa)}{|D(\lambda,\kappa) |^2} - 2\kappa^2 \frac{\Re (\overline{D(\lambda, \kappa)} D'(\lambda, \kappa)) \Im F(\lambda,\kappa)}{|D(\lambda,\kappa) |^4}
\end{equation*}
with $D'(\lambda, \kappa) = -1- \kappa^2 F'(\lambda,\kappa)$. It follows that for $t\in {\mathbb R}$, $0< |\kappa |\leq \kappa_1,$
\begin{align*}
\left| \int_a^{a_0} \, d\lambda \, e^{-i\lambda t} (g(\lambda)\Im\,G(\lambda, \kappa))' \right| &\leq \kappa^2 \int_a^{a_0} \, d\lambda \, \frac{g (\lambda ) |\Im F'(\lambda,\kappa)| + |g' (\lambda)| |\Im F(\lambda,\kappa)|}{(a_0 + \delta_1 -\lambda)^2} \\
&+ 2 \kappa^2 \int_a^{a_0} \, d\lambda \, g (\lambda)\, \frac{(1+ \kappa^2 |F'(\lambda,\kappa)|) |\Im F(\lambda,\kappa)|}{(a_0 +\delta_1 -\lambda)^3} \\
&\leq C \kappa^2 \, ,
\end{align*}
in view of Hypotheses ({\bf H0})(b) and ({\bf H1})(a).
A similar procedure applies to the term
$$\int_{b_0}^b \, d\lambda \, e^{-i\lambda t}g(\lambda)\Im\,G(\lambda, \kappa)$$ and the conclusion of the proposition follows. \ep

Now, we provide some upper bounds on the terms ${\mathcal I}_2$ and ${\mathcal J}_2$, which rely on the following lemma:
\begin{lemma}\label{cases} Let $(\alpha, \beta) \in [0,\infty)^2$, $0< |\kappa | <\kappa_1$ and $z_\kappa = \lambda_{\kappa}^{\infty} -i\kappa^2 \Im\,  F(\lambda_{\kappa}^{\infty},\kappa) = \lambda_{\kappa} -\kappa^2 F(\lambda_{\kappa}^{\infty},\kappa)$. There exist $C>0$ and $0<\kappa_2 \leq \kappa_1$, such that for any $0< |\kappa | \leq \kappa_2,$
$$
\int_{a_0}^{b_0} \, \frac{|\lambda-\Re z_{\kappa}|^{\alpha}}{|\lambda -z_{\kappa}|^{\beta}}\, d\lambda \leq \left\{
\begin{array}{lcc}
C \kappa^{2(\alpha-\beta+1)} & \text{if} & \alpha-\beta+1 <0\\
C |\log |\kappa || & \text{if} & \alpha-\beta+1 =0\\
C & \text{if} & \alpha-\beta+1 >0
\end{array} \right.
$$
\end{lemma}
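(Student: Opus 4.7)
The plan is to reduce the integral to the model form $\int u^{\alpha}/(u^2+\eta^2)^{\beta/2}\, du$ by translating the variable, and then to extract the $\kappa$-dependence through the substitution $u = \eta v$ where $\eta := |\Im z_{\kappa}|$. The key quantitative input is that $\eta$ is comparable to $\kappa^2$: on one hand, Hypothesis ({\bf H2}) gives $\eta = \kappa^2 \Im F(\lambda_{\kappa}^{\infty},\kappa) \geq c\kappa^2$ (since Lemma \ref{fixed} places $\lambda_{\kappa}^{\infty}$ in $[a_0+\delta_1, b_0-\delta_1]$ where the infimum condition in ({\bf H2}) applies); on the other hand, the boundedness statement in ({\bf H0})(b) yields $\eta \leq C\kappa^2$. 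The trichotomy in the conclusion then reflects convergence/divergence of $\int_0^\infty v^\alpha/(v^2+1)^{\beta/2}\,dv$ according to the sign of $\alpha-\beta+1$.

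First, I would set $\mu_{\kappa} := \Re z_{\kappa} = \lambda_{\kappa}^{\infty}$ and use $|\lambda - z_{\kappa}|^2 = (\lambda-\mu_\kappa)^2 + \eta^2$. Substituting $u = \lambda - \mu_\kappa$, the integral becomes
$$\int_{a_0-\mu_\kappa}^{b_0-\mu_\kappa} \frac{|u|^{\alpha}}{(u^2+\eta^2)^{\beta/2}}\, du.$$
Since $\mu_\kappa \in [a_0+\delta_1, b_0-\delta_1]$, the integration range is contained in $[-L,L]$ with $L := b_0-a_0$, so the integrand being positive, it suffices to bound
$$2\int_0^{L} \frac{u^{\alpha}}{(u^2+\eta^2)^{\beta/2}}\, du.$$

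Next I would perform the scaling $u = \eta v$, yielding
$$2\,\eta^{\alpha-\beta+1} \int_0^{L/\eta} \frac{v^{\alpha}}{(v^2+1)^{\beta/2}}\, dv,$$
and treat the three cases separately. If $\alpha-\beta+1<0$, then $v^{\alpha}/(v^2+1)^{\beta/2} \sim v^{\alpha-\beta}$ at infinity with exponent $<-1$, so the integral up to $L/\eta$ is bounded by $\int_0^{\infty}v^{\alpha}(v^2+1)^{-\beta/2}\,dv < \infty$, and the upper bound $\eta \geq c\kappa^2$ (which, since the exponent is negative, gives $\eta^{\alpha-\beta+1}\leq C \kappa^{2(\alpha-\beta+1)}$) completes this case. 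If $\alpha-\beta+1=0$, then the prefactor is $1$ and the inner integral grows like $\log(L/\eta)$, so using $c\kappa^2 \le \eta \le C\kappa^2$ yields the bound $C|\log|\kappa||$. If $\alpha-\beta+1>0$, then the inner integral is of order $(L/\eta)^{\alpha-\beta+1}$ for small $\eta$, and after multiplication by $\eta^{\alpha-\beta+1}$ the $\eta$-dependence cancels, leaving a constant.

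The only real subtlety is ensuring that the \emph{lower} bound $\eta \geq c\kappa^2$ is valid uniformly in small $\kappa$, which requires $\inf_{\lambda \in [a_0,b_0]} \Im F(\lambda,\kappa) \geq c_0 > 0$ uniformly in $|\kappa| \le \kappa_2$ for some $\kappa_2 \leq \kappa_1$; this is precisely Hypothesis ({\bf H2}), possibly after a final shrinking of $\kappa_1$ to $\kappa_2$. Apart from this and the observation that $\mu_\kappa$ stays away from the endpoints (Lemma \ref{fixed}), the argument is entirely a scaling computation, so I do not anticipate any genuine obstacle.
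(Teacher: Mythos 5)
Your argument is essentially the paper's: both proofs center the variable at $\Re z_\kappa=\lambda_\kappa^\infty$, rescale by $\eta:=|\Im z_\kappa|$, and read off the three regimes from the model integral $\int v^\alpha (v^2+1)^{-\beta/2}\,dv$, using that $\eta$ is comparable to $\kappa^2$ (the paper's change of variables $\lambda-\Re z_\kappa=\mu|\Im z_\kappa|$ followed by splitting at $|\mu|=1$ is the same computation as your symmetrized bound $2\int_0^{L}$ and scaling $u=\eta v$). The one point to repair is your justification of the uniform lower bound $\eta\ge c\kappa^2$: Hypothesis ({\bf H2}) as stated is pointwise in $\kappa$ — the infimum is positive for each fixed $\kappa$, but nothing in ({\bf H2}) alone prevents it from degenerating as $\kappa\to 0$ — so it is not "precisely" the uniform statement you need. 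The paper gets the uniform bound from the continuity of $F$ at $(\lambda_0,0)$ in ({\bf H0})(b), together with $\lambda_\kappa^\infty\to\lambda_0$ (Lemma \ref{fixed} plus ({\bf H0})(a)) and $\Im F(\lambda_0,0)>0$ (which ({\bf H2}) does give at $\kappa=0$, since $\lambda_0\in(a_0,b_0)$): then $\Im F(\lambda_\kappa^\infty,\kappa)\to\Im F(\lambda_0,0)>0$, and one fixes $0<\kappa_2\le\kappa_1$ with $\Im F(\lambda_\kappa^\infty,\kappa)\ge\delta_2>0$ for $0<|\kappa|\le\kappa_2$; this is exactly where the shrinking from $\kappa_1$ to $\kappa_2$ in the statement comes from. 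With that substitution your proof goes through unchanged: the convergent model integral when $\alpha-\beta+1<0$, the $\log(L/\eta)$ growth when $\alpha-\beta+1=0$, and the cancellation of the $\eta$-powers when $\alpha-\beta+1>0$ all match the paper's case analysis.
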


\begin{proof} We start with some preliminary remarks. By Lemma \ref{fixed}, $\Re z_{\kappa} =\lambda_{\kappa}^{\infty} \in [a_0+\delta_1,b_0-\delta_1]$ for any $0< |\kappa | \leq \kappa_1$. Lemma \ref{fixed} and Hypothesis ({\bf H0})(a) also imply: $\lim_{\kappa \rightarrow 0} \lambda_{\kappa}^{\infty} =\lambda_0$. Finally, $\Im z_{\kappa} = -i\kappa^2 \Im\,  F(\lambda_{\kappa}^{\infty},\kappa) <0$ due to ({\bf H2}). Since $F$ is continuous at $(\lambda_0,0)$, we obtain that:
$$ \lim_{\kappa \rightarrow 0} \frac{\Im z_{\kappa}}{\kappa^2} = -\Im F(\lambda_0,0) < 0 \, .$$
So, given $0< \delta_2 < \Im F(\lambda_0,0)$, we can fix $0< \kappa_2 \leq \kappa_1$, such that for any $0< |\kappa | <\kappa_2,$
\begin{equation}\label{k3}
- \kappa^2 \sup_{(\lambda,\kappa)\in [a_0,b_0]\times [-\kappa_0,\kappa_0]} |F(\lambda,\kappa)| \leq \Im z_{\kappa} \leq - \kappa^2 \delta_2 \, .
\end{equation}
Now, with the change of variables $\lambda - \Re z_{\kappa}= \mu |\Im z_{\kappa} |$, we obtain that:
$$
\int_{a_0}^{b_0} \, \frac{|\lambda-\Re z_{\kappa}|^{\alpha}}{|\lambda -z_{\kappa}|^{\beta}}\, d\lambda=
|\Im z_{\kappa} |^{\alpha-\beta+1} \int_{a_{\kappa}}^{b_{\kappa}}\frac{|\mu |^{\alpha}}{(\mu^2 +1)^{\frac{\beta}{2}}}\, d\mu
$$
where
\begin{gather*}
a_{\kappa} = \frac{\Re z_{\kappa} -a_0}{|\Im z_{\kappa} |} \quad \text{and} \quad b_{\kappa} = \frac{\Re z_{\kappa} -b_0}{|\Im z_{\kappa} |}
\end{gather*}
We denote $\gamma=\alpha -\beta$.

\noindent{\bf Case $\gamma\neq -1 $.} We split the integral, integrating on the intervals $[a_{\kappa},\max (-1,a_{\kappa})]$, $[\max (-1,a_{\kappa}),\min (1,b_{\kappa})]$ and $[\min (1,b_{\kappa}),b_{\kappa}]$. On the interval $[\max (-1,a_{\kappa}),\min (1,b_{\kappa})]$, the integral is bounded by the same integral on $[-1,1]$ for which we observe that the integrand is bounded by $|\mu|^{\alpha}\le 1$. This term is finally bounded by $2 |\Im z_{\kappa} |^{\gamma+1}$. The integral on $[\min (1,b_{\kappa}),b_{\kappa}]$ is bounded by:
$$
|\Im z_{\kappa} |^{\gamma+1} \int_{1}^{b_\kappa}\frac{|\mu|^{\alpha}}{(\mu^2 +1)^{\frac{\beta}{2}}}d\mu \le
|\Im z_{\kappa} |^{\gamma+1} \int_{1}^{b_\kappa}\mu^{\gamma}d\mu =\frac{(b_0 - \Re z_{\kappa})^{\gamma+1}}{(\gamma +1)}-\frac{|\Im z_{\kappa} |^{\gamma+1}}{\gamma+1}\, .
$$
We manage the integral on the interval $[a_{\kappa},\max (-1,a_{\kappa})]$ analogously. Estimates for the case $\gamma\neq -1$ follows now from (\ref{k3}).

\noindent{\bf Case $\gamma =-1$.} We split again the integral, integrating on the intervals $[a_{\kappa},\max (-1,a_{\kappa})]$, $[\max (-1,a_{\kappa}),\min (1,b_{\kappa})]$ and $[\min (1,b_{\kappa}),b_{\kappa}]$. On the interval $[\max (-1,a_{\kappa}),\min (1,b_{\kappa})]$, the integral is bounded by the same integral on $[-1,1]$, which is bounded by $2$. The integral on $[\min (1,b_{\kappa}),b_{\kappa}]$ is bounded by:
$$
\int_{1}^{b_\kappa}\frac{|\mu|^{\alpha}}{(\mu^2 +1)^{\frac{\beta}{2}}}\, d\mu \le \int_{1}^{b_\kappa}\frac{1}{\mu}\, d\mu
=\ln (b_0 - \Re z_{\kappa}) - \ln |\Im z_{\kappa} | \, .
$$
We manage the integral on the interval $[a_{\kappa},\max (-1,a_{\kappa})]$ analogously. Estimates for the case $\gamma = -1$ follows again from (\ref{k3}).
\end{proof}

\begin{lemma}\label{g-g1} There exists $C>0$ such that for any $\lambda \in [a_0,b_0]$, $0< |\kappa |\leq \kappa_1,$
\begin{align*}
|G(\lambda,\kappa)-G_1 (\lambda,\kappa)| &\le C \kappa^2 |\widehat{G}(\lambda,\kappa)| \sup_{(\lambda,\kappa)\in [a_0,b_0] \times [-\kappa_0,\kappa_0]} |F'(\lambda,\kappa)| \, ,\\
|G(\lambda,\kappa)-G_1 (\lambda,\kappa)| &\le C \kappa^2 |\widehat{G}(\lambda,\kappa)|^2 |\lambda-\lambda_{\kappa}^{\infty}|^{\alpha+1} \, .
\end{align*}
\end{lemma}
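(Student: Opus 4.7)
The plan is to write $G - G_1 = (D_1 - D)/(D\, D_1)$ and to estimate the numerator and the denominator separately. For the denominator, the two bounds (\ref{DDhat}) and (\ref{D1Dhat}) already derived in the proof of Theorem \ref{protothm} give us $|D(\lambda,\kappa)|\ge \delta_1 |\widehat{D}(\lambda,\kappa)|$ and $|D_1(\lambda,\kappa)|\ge \delta_1 |\widehat{D}(\lambda,\kappa)|$ on $[a_0,b_0]$. So $|D\,D_1|\ge \delta_1^{2}|\widehat{D}|^{2}$, and the entire task is to produce two different estimates on $|D_1 - D|$: one that yields an extra factor $|\widehat{G}|^{-1}$ (to match the first inequality) and a sharper one using the $C^{1,\alpha}$-regularity of $F(\cdot,\kappa)$.

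From the definitions,
\begin{equation*}
D_1(\lambda,\kappa) - D(\lambda,\kappa) = \kappa^{2}\bigl[\, F(\lambda,\kappa) - F(\lambda_{\kappa}^{\infty},\kappa) - F'(\lambda_{\kappa}^{\infty},\kappa)(\lambda - \lambda_{\kappa}^{\infty})\,\bigr]\, .
\end{equation*}
For the first bound, I apply the Mean Value Theorem once: there exists $\xi$ between $\lambda$ and $\lambda_{\kappa}^{\infty}$ such that $F(\lambda,\kappa)-F(\lambda_{\kappa}^{\infty},\kappa) = F'(\xi,\kappa)(\lambda-\lambda_{\kappa}^{\infty})$. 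Thus
\begin{equation*}
|D_1-D| \le \kappa^{2}\bigl(|F'(\xi,\kappa)|+|F'(\lambda_{\kappa}^{\infty},\kappa)|\bigr)\,|\lambda-\lambda_{\kappa}^{\infty}| \le 2\kappa^{2}\sup_{[a_0,b_0]\times[-\kappa_0,\kappa_0]}|F'|\cdot|\lambda-\lambda_{\kappa}^{\infty}|\, .
\end{equation*}
The key observation is that $|\widehat{D}(\lambda,\kappa)|^{2}=(\lambda-\lambda_{\kappa}^{\infty})^{2}+\kappa^{4}(\Im F(\lambda_{\kappa}^{\infty},\kappa))^{2}\ge (\lambda-\lambda_{\kappa}^{\infty})^{2}$, so that $|\lambda-\lambda_{\kappa}^{\infty}|/|\widehat{D}|^{2}\le 1/|\widehat{D}|=|\widehat{G}|$. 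Combining with the denominator bound gives the first inequality.

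For the second bound, I use Hypothesis ({\bf H1})(b): $F'(\cdot,\kappa)$ is $\alpha$-H\"older continuous on $[a_0,b_0]$, uniformly in $\kappa$. Writing the Taylor remainder in integral form,
\begin{equation*}
F(\lambda,\kappa) - F(\lambda_{\kappa}^{\infty},\kappa) - F'(\lambda_{\kappa}^{\infty},\kappa)(\lambda-\lambda_{\kappa}^{\infty}) = \int_{\lambda_{\kappa}^{\infty}}^{\lambda}\bigl[F'(s,\kappa)-F'(\lambda_{\kappa}^{\infty},\kappa)\bigr]\,ds\, ,
\end{equation*}
the uniform H\"older bound $|F'(s,\kappa)-F'(\lambda_{\kappa}^{\infty},\kappa)|\le M|s-\lambda_{\kappa}^{\infty}|^{\alpha}$ yields $|D_1-D|\le \frac{M\kappa^{2}}{\alpha+1}|\lambda-\lambda_{\kappa}^{\infty}|^{\alpha+1}$. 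Dividing by $|D\,D_1|\ge \delta_1^{2}|\widehat{D}|^{2}=\delta_1^{2}|\widehat{G}|^{-2}$ gives the second inequality. No step is a genuine obstacle here; the only subtlety is making sure to use the crude MVT estimate for the first inequality (which yields the $|\widehat{G}|^{1}$ factor after exploiting $|\widehat{D}|\ge|\lambda-\lambda_{\kappa}^{\infty}|$) and the sharper H\"older estimate for the second one (which yields $|\widehat{G}|^{2}|\lambda-\lambda_{\kappa}^{\infty}|^{\alpha+1}$), since a single estimate cannot play both roles.
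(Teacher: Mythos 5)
Your proposal is correct and follows essentially the same route as the paper: the factorization $G-G_1=G(D-D_1)G_1$ together with the lower bounds (\ref{D1Dhat}) and (\ref{DDhat}), a Lipschitz-type bound on $D-D_1$ combined with $|\widehat{G}(\lambda,\kappa)|\,|\lambda-\lambda_{\kappa}^{\infty}|\le 1$ for the first estimate, and the H\"older Taylor remainder from ({\bf H1})(b) for the second. The only nitpick is that the equality form of the Mean Value Theorem you invoke does not hold for the complex-valued $F$; replace it by the integral form $F(\lambda,\kappa)-F(\lambda_{\kappa}^{\infty},\kappa)=\int_{\lambda_{\kappa}^{\infty}}^{\lambda}F'(s,\kappa)\,ds$ (as the paper does), which yields the same bound.
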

\begin{proof} For $\lambda \in [a_0,b_0]$, $0< |\kappa |\leq \kappa_1$, we note that $G_1-G=G(D-D_1)G_1$ and so $|G-G_1|\le \delta_1^{-2} |\widehat{G}|^2 |D-D_1|$ (see e.g. (\ref{D1Dhat}) and (\ref{DDhat})). On the other hand, for $\lambda \in [a_0,b_0]$, $|\kappa |\leq \kappa_1,$
\begin{align}\label{D-D1}
D(\lambda,\kappa) -D_1 (\lambda,\kappa) &=\kappa^2 \left(F(\lambda,\kappa)-F(\lambda_{\kappa}^{\infty},\kappa)-F'(\lambda_{\kappa}^{\infty},\kappa)(\lambda-\lambda_{\kappa}^{\infty}) \right)\\
&= \kappa^2 \int_{\lambda_{\kappa}^{\infty}}^\lambda (F'(\mu,\kappa)-F'(\lambda_{\kappa}^{\infty},\kappa))\, d\mu\, . \nonumber
\end{align}
({\bf H1})(a) entails: $|D(\lambda,\kappa) -D_1(\lambda,\kappa)|\le 2 C\kappa^2 |\lambda - \lambda_{\kappa}^{\infty}| \sup_{(\lambda,\kappa)\in [a_0,b_0] \times [-\kappa_0,\kappa_0]} |F'(\lambda,\kappa)|$, which leads to the first estimate once noted that $|\widehat{G}(\lambda,\kappa)| |\lambda - \lambda_{\kappa}^{\infty}| \leq 1$ for $\lambda \in [a_0,b_0]$ and $0< |\kappa |\leq \kappa_1$. ({\bf H1})(b) entails: $|D(\lambda,\kappa) -D_1(\lambda,\kappa)|\le C\kappa^2 |\lambda-\lambda_{\kappa}^{\infty}|^{\alpha +1}$, which leads to the second estimate.
\end{proof}

\begin{corollary}\label{error1} There exists $C>0$ such that for any $t\in {\mathbb R}$, $0< |\kappa |\leq \kappa_2,$
$$
\left| {\mathcal I}_2(t,\kappa) \right| \le \int_{a_0}^{b_0}\, |G(\lambda,\kappa) -G_1 (\lambda,\kappa) |\, d\lambda \le C\kappa ^2 \, .
$$
\end{corollary}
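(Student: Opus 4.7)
My plan is as follows. The first inequality is essentially cosmetic: since $|e^{-i\lambda t}|=1$ and $|\Im z|\le |z|$ for any $z\in {\mathbb C}$, the definition of ${\mathcal I}_2$ immediately gives
$$
|{\mathcal I}_2(t,\kappa)| \le \frac{1}{\pi}\int_{a_0}^{b_0}|\Im (G(\lambda,\kappa)-G_1(\lambda,\kappa))|\,d\lambda \le \int_{a_0}^{b_0}|G(\lambda,\kappa)-G_1(\lambda,\kappa)|\,d\lambda,
$$
absorbing the $1/\pi$ factor into $C$. So the real task is to show the integral on the right is $O(\kappa^2)$ uniformly in $t$ (the bound is in fact $t$-independent, which is consistent with the claim).

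For the second inequality I would use the \emph{second} estimate of Lemma~\ref{g-g1}, namely
$$
|G(\lambda,\kappa)-G_1(\lambda,\kappa)|\le C\kappa^2 |\widehat G(\lambda,\kappa)|^2 |\lambda-\lambda_\kappa^\infty|^{\alpha+1},
$$
rather than the first (which, integrated, would only produce a spurious $|\log|\kappa||$ from Lemma~\ref{cases}). Recalling that
$$
\widehat G(\lambda,\kappa)=\bigl(\lambda_\kappa^\infty-\lambda-i\kappa^2 \Im F(\lambda_\kappa^\infty,\kappa)\bigr)^{-1}=(z_\kappa-\lambda)^{-1},
$$
with $z_\kappa$ as in Lemma~\ref{cases}, we get
$$
\int_{a_0}^{b_0}|G(\lambda,\kappa)-G_1(\lambda,\kappa)|\,d\lambda \le C\kappa^2 \int_{a_0}^{b_0}\frac{|\lambda-\Re z_\kappa|^{\alpha+1}}{|\lambda-z_\kappa|^{2}}\,d\lambda,
$$
since $\Re z_\kappa=\lambda_\kappa^\infty$.

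Now I would apply Lemma~\ref{cases} with parameters $\alpha_*:=\alpha+1$ and $\beta_*:=2$. This gives $\alpha_*-\beta_*+1=\alpha>0$ (using the hypothesis $\alpha\in(0,1]$ from ({\bf H1})(b)), so we are in the \emph{third} case of Lemma~\ref{cases}, which provides a bound independent of $\kappa$. Combining, we obtain for $0<|\kappa|\le\kappa_2$:
$$
\int_{a_0}^{b_0}|G(\lambda,\kappa)-G_1(\lambda,\kappa)|\,d\lambda\le C\kappa^2,
$$
which is what is claimed.

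The only subtle point — and what I would flag as the "main obstacle" — is precisely the choice of which estimate of Lemma~\ref{g-g1} to feed into Lemma~\ref{cases}. The first bound would give $\gamma=\alpha-\beta=-1$ in the notation of Lemma~\ref{cases} and therefore only the logarithmic rate $\kappa^2|\log|\kappa||$, losing sharpness; the second bound exploits the H\"older regularity of $F'$ (hypothesis ({\bf H1})(b)) through the exponent $\alpha+1>1$, and this extra power is exactly what tips $\gamma+1$ from $0$ to $\alpha>0$, putting us in the case of Lemma~\ref{cases} where the integral is bounded uniformly in $\kappa$. Once this selection is made the rest is mechanical.
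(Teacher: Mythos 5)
Your proposal is correct and follows the paper's own argument: both integrate the second estimate of Lemma \ref{g-g1} and then invoke Lemma \ref{cases} with $\beta=2$ and $\alpha+1$ in place of $\alpha$, so that $\alpha-\beta+1=\alpha>0$ puts you in the $\kappa$-independent case. Your remark about why the first estimate of Lemma \ref{g-g1} would be insufficient is a correct observation, but the core proof is the same as the paper's.
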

\begin{proof} We integrate the second statement of Lemma \ref{g-g1} for $0< |\kappa |\leq \kappa_2$ and obtain that,
$$
\int_{a_0}^{b_0} |G(\lambda,\kappa)-G_1(\lambda,\kappa) | \, d\lambda \le C \kappa^2 \int_{a_0}^{b_0} \frac{|\lambda-\lambda_{\kappa}^{\infty}|^{\alpha +1}}{|\lambda - \lambda_{\kappa}^{\infty}+i\kappa^2 \Im\,  F(\lambda_{\kappa}^{\infty},\kappa)|^2} d\lambda \, .
$$
We use Lemma \ref{cases}, with $\beta =2$ and $\alpha +1$ instead of $\alpha$. Then, the number $\alpha -\beta +1$ in Lemma \ref{cases} is just $\alpha$, which is positive. The first and second statements follow.
\end{proof}

\begin{lemma}\label{g'-g1'} There exists $C>0$ such that for any $\lambda \in [a_0,b_0]$, $0< |\kappa |\leq \kappa_1,$
$$
|G'(\lambda,\kappa)-G_1'(\lambda,\kappa)|\le C \kappa^2|\widehat{G}(\lambda,\kappa)|^2 |\lambda-\lambda_{\kappa}^{\infty}|^{\alpha} \, .
$$
\end{lemma}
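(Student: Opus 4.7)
The plan is to start from the identity $G - G_1 = (D_1 - D)GG_1$ already exploited in the proof of Lemma~\ref{g-g1}, and differentiate it with respect to $\lambda$. This gives
\[
G'(\lambda,\kappa) - G_1'(\lambda,\kappa) = (D_1'-D')\,G G_1 + (D_1 - D)\,(G' G_1 + G G_1')\,,
\]
and I would bound each of the three terms separately to match the target bound $C\kappa^2|\widehat{G}|^2|\lambda - \lambda_\kappa^\infty|^\alpha$.

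For the first term, an explicit computation from the definitions of $D$ and $D_1$ gives $D'(\lambda,\kappa) = -1-\kappa^2 F'(\lambda,\kappa)$ and $D_1'(\lambda,\kappa) = -1-\kappa^2 F'(\lambda_\kappa^\infty,\kappa)$, so $D_1' - D' = \kappa^2\bigl(F'(\lambda,\kappa) - F'(\lambda_\kappa^\infty,\kappa)\bigr)$. Hypothesis (\textbf{H1})(b), namely the uniform $C^{1,\alpha}$ regularity of $F(\cdot,\kappa)$ on $[a_0,b_0]$, yields $|D_1'-D'|\le C\kappa^2|\lambda-\lambda_\kappa^\infty|^\alpha$. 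Combined with the standing bounds $|G|,|G_1|\le \delta_1^{-1}|\widehat{G}|$ established in (\ref{D1Dhat}) and (\ref{DDhat}), this term is already controlled by $C\kappa^2|\lambda-\lambda_\kappa^\infty|^\alpha|\widehat{G}|^2$, which is exactly what we want.

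For the remaining two terms, I would use the inequality $|D - D_1|\le C\kappa^2|\lambda-\lambda_\kappa^\infty|^{\alpha+1}$, which was derived from (\textbf{H1})(b) in (\ref{D-D1}) during the proof of Lemma~\ref{g-g1}. To bound $G'$ and $G_1'$, write $G' = -D'G^2$ and $G_1' = -D_1' G_1^2$; hypothesis (\textbf{H1})(a) makes $D'$ and $D_1'$ uniformly bounded on $[a,b]\times[-\kappa_0,\kappa_0]$, so $|G'|, |G_1'|\le C|\widehat{G}|^2$. Multiplying, each of these two terms is bounded by $C\kappa^2|\lambda-\lambda_\kappa^\infty|^{\alpha+1}|\widehat{G}|^3$. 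The key observation closing the argument is that $|\widehat{D}(\lambda,\kappa)|^2 = (\lambda-\lambda_\kappa^\infty)^2 + \kappa^4(\Im F(\lambda_\kappa^\infty,\kappa))^2 \ge (\lambda-\lambda_\kappa^\infty)^2$, hence $|\widehat{G}(\lambda,\kappa)|\,|\lambda-\lambda_\kappa^\infty|\le 1$; using this to absorb one factor of $|\widehat{G}|$ against $|\lambda-\lambda_\kappa^\infty|$ reduces each term to $C\kappa^2|\lambda-\lambda_\kappa^\infty|^\alpha|\widehat{G}|^2$, and assembling the three contributions completes the proof.

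The main obstacle is purely bookkeeping: differentiation naturally produces an extra power of $|\widehat{G}|$ relative to Lemma~\ref{g-g1}, together with an extra power of $|\lambda-\lambda_\kappa^\infty|$. These must cancel through the inequality $|\widehat{G}||\lambda-\lambda_\kappa^\infty|\le 1$. No analytic input beyond Hypotheses (\textbf{H1})(a)--(b) and (\textbf{H2}) is required, so the proof is essentially parallel to that of Lemma~\ref{g-g1}, with differentiation inserted at the appropriate place.
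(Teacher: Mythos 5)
Your proposal is correct and follows essentially the same route as the paper: your three terms obtained by differentiating $G-G_1=(D_1-D)GG_1$ coincide, after substituting $G'=-D'G^2$ and $G_1'=-D_1'G_1^2$, with the paper's decomposition $G'-G_1'=L_1+L_2+L_3$, and you invoke exactly the same ingredients, namely the H\"older bound on $F'(\lambda,\kappa)-F'(\lambda_\kappa^{\infty},\kappa)$ from ({\bf H1})(b), the estimate $|D-D_1|\le C\kappa^2|\lambda-\lambda_\kappa^{\infty}|^{\alpha+1}$, the bounds $|G|,|G_1|\le \delta_1^{-1}|\widehat{G}|$, and the absorption inequality $|\widehat{G}(\lambda,\kappa)|\,|\lambda-\lambda_\kappa^{\infty}|\le 1$. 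No gaps.
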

\begin{proof} For $\lambda \in [a_0,b_0]$, $0< |\kappa |\leq \kappa_1$, we can write the difference between the difference of the derivatives of $G$ and $G_1$ w.r.t. $\lambda$: $G'-G{_1}'=L_1+L_2+L_3$ where $L_1 =G(G_1-G)(1+\kappa^2 F'(\lambda,\kappa))$, $L_2 =\kappa^2GG_1(F'(\lambda_{\kappa}^{\infty},\kappa)-F'(\lambda,\kappa))$ and $L_3 =G(G_1-G)(1+\kappa^2 F'(\lambda_{\kappa}^{\infty},\kappa))$. Recall that for $\lambda \in [a_0,b_0]$, $0< |\kappa |\leq \kappa_1,$
\begin{itemize}
\item $G_1-G=G(D-D_1)G_1$ and so $|G-G_1|\le \delta_1^{-2} |\widehat{G}|^2 |D-D_1|$ (see (\ref{D1Dhat}) and (\ref{DDhat})),
\item $|D(\lambda,\kappa) -D_1(\lambda,\kappa)|\le C\kappa^2 |\lambda-\lambda_{\kappa}^{\infty}|^{\alpha +1}$ due to Hypothesis ({\bf H1})(b) and (\ref{D-D1}).
\end{itemize}
Up to some positive multiplicative constant, the quantities $L_1$ and $L_3$ can be bounded by $\kappa^2 |\widehat{G}(\lambda,\kappa)|^3 |\lambda -\lambda_{\kappa}^{\infty}|^{\alpha+1}$, while the term $L_2$ is bounded by $\kappa^2 | \widehat{G}(\lambda,\kappa)|^2 |\lambda -\lambda_{\kappa}^{\infty}|^{\alpha}$. The proof follows from the fact that $|\widehat{G}(\lambda,\kappa) |\, |\lambda -\lambda_{\kappa}^{\infty}|\le 1$ for $\lambda \in [a_0,b_0]$ and $0< |\kappa |\leq \kappa_1$.
\end{proof}

\begin{corollary}\label{error2} There exists $C>0$ such that for any $t\in {\mathbb R}$, $0< |\kappa |\leq \kappa_2,$
$$
\left| {\mathcal J}_2(t,\kappa) \right| \leq \int_{a_0}^{b_0} \, |G'(\lambda,\kappa) -G_1'(\lambda,\kappa)|\, d\lambda \leq C \left\{ \begin{array}{lcc}
\kappa^{2\alpha} & \text{if} & \alpha \in (0,1) \\
\kappa^2 |\log |\kappa || & \text{if} & \alpha =1\\
\kappa ^2 & \text{if} & \alpha >1
\end{array} \right. \, .
$$
\end{corollary}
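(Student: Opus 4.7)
The first inequality in Corollary \ref{error2} follows immediately from the definition of $\mathcal{J}_2$, since $|e^{-i\lambda t}| = 1$ and taking the imaginary part only decreases the modulus. So the entire content of the statement lies in bounding the $L^1$ norm of $G' - G_1'$ on $[a_0,b_0]$.

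To get that bound, I would feed the pointwise estimate from Lemma \ref{g'-g1'} into Lemma \ref{cases}. Explicitly, for $\lambda \in [a_0,b_0]$ and $0 < |\kappa| \leq \kappa_2$, the lemma gives
\[
|G'(\lambda,\kappa) - G_1'(\lambda,\kappa)| \leq C\kappa^2 |\widehat{G}(\lambda,\kappa)|^2 |\lambda - \lambda_\kappa^\infty|^\alpha,
\]
and by definition $|\widehat{D}(\lambda,\kappa)|^2 = (\lambda - \lambda_\kappa^\infty)^2 + \kappa^4 (\Im F(\lambda_\kappa^\infty,\kappa))^2 = |\lambda - z_\kappa|^2$ with $z_\kappa$ as in Lemma \ref{cases}. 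Integrating over $[a_0,b_0]$ therefore yields
\[
\int_{a_0}^{b_0} |G'(\lambda,\kappa) - G_1'(\lambda,\kappa)|\, d\lambda \leq C\kappa^2 \int_{a_0}^{b_0} \frac{|\lambda - \Re z_\kappa|^\alpha}{|\lambda - z_\kappa|^2}\, d\lambda.
\]

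I would then apply Lemma \ref{cases} with the exponent $\alpha$ in the numerator and $\beta = 2$ in the denominator, so that the critical parameter is $\alpha - \beta + 1 = \alpha - 1$. The three cases of Lemma \ref{cases} correspond exactly to the three regimes of the corollary: for $\alpha \in (0,1)$ the integral is bounded by $C\kappa^{2(\alpha-1)}$ and multiplication by the prefactor $\kappa^2$ produces $C\kappa^{2\alpha}$; for $\alpha = 1$ the integral is bounded by $C|\log|\kappa||$, giving $C\kappa^2 |\log|\kappa||$; and for $\alpha > 1$ the integral is bounded by a constant, giving $C\kappa^2$.

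The proof is essentially mechanical once Lemmas \ref{cases} and \ref{g'-g1'} are in hand, so there is no real obstacle here; the only subtlety is just the bookkeeping to identify the exponents correctly when applying Lemma \ref{cases}, and to observe that $\kappa_2 \leq \kappa_1$ so that the pointwise estimate of Lemma \ref{g'-g1'} is valid on the same range of $\kappa$ where Lemma \ref{cases} is applicable.
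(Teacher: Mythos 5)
Your proposal is correct and follows the paper's own argument: integrate the pointwise bound of Lemma \ref{g'-g1'}, identify $|\widehat{D}(\lambda,\kappa)|=|\lambda-z_\kappa|$, and apply Lemma \ref{cases} with $\beta=2$ so that $\alpha-\beta+1=\alpha-1$ produces exactly the three regimes $\kappa^{2\alpha}$, $\kappa^2|\log|\kappa||$ and $\kappa^2$. Your explicit justification of the first inequality ($|e^{-i\lambda t}|=1$ and $|\Im w|\le|w|$) and the remark on $\kappa_2\le\kappa_1$ are consistent with what the paper leaves implicit.
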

\begin{proof} We integrate Lemma \ref{g'-g1'} for $0< |\kappa |\leq \kappa_2$ and obtain that:
$$
\int_{a_0}^{b_0} |G'(\lambda,\kappa)-G_1'(\lambda,\kappa) | \, d\lambda \le C \kappa^2 \int_{a_0}^{b_0} \frac{|\lambda-\lambda_{\kappa}^{\infty}|^{\alpha}}{|\lambda - \lambda_{\kappa}^{\infty}+i\kappa^2 \Im\,  F(\lambda_{\kappa}^{\infty},\kappa)|^2} d\lambda \, .
$$
Then, we apply Lemma \ref{cases} with $\beta =2$. Indeed, if $\alpha \in (0,1)$ (resp. $\alpha=1$, resp. $\alpha >1$), $\alpha -\beta +1= \alpha -1<0$ (resp. $= 0$, resp. $>0$), which proves the first statement. The second statement follows now from the first and Corollary \ref{error1}.
\end{proof}



\section{Rank one perturbations}

In this section, we shall prove of results concerning rank one perturbations of self--adjoint operators. In 
\ref{behavior} we show how the positivity on the imaginary part of the unperturbed reduced resolvent implies pure absolutely continuous spectrum. In \ref{res} we describe how a simple embedded eigenvalue turns into a resonance. Here  smoothness of the resolvent is required. In \ref{so} we relate this to dynamic behavior of the system.

\subsection{Behavior of Spectra}\label{behavior}

Let $H_0$ be  a self-adjoint operator on Hilbert space  $\mathcal{H}$, $\psi \in \mathcal{H}$ a normalized vector and define
\begin{equation}\label{rankone1}
H_{\kappa} = H_0 + \kappa | \psi \rangle \langle \psi | \, , \quad \kappa \in \mathbb{R}\, .
\end{equation}
Let
$$
\mathcal{H}_{\psi}:=\overline{span\{ (H_{\kappa}-z)^{-1} \psi/ z \notin \mathbb R \}}
$$
be the cyclic subspace generated by $\psi$. This space is independent of $\kappa$ and reduces the operator $H_{\kappa}$,
for every $\kappa$, see \cite{don}.

Let us denote by $H_{\kappa}^{\psi}$ the part of $H_{\kappa}$ on $\mathcal{H}_{\psi}$,
i.e. $H_{\kappa}^{\psi}:\mathcal{H}_{\psi}\to\mathcal{H}_{\psi}$ is given by
$$
H_{\kappa}^{\psi}\gamma = H_{\kappa} \gamma,\,\,\, \mbox{for all } \gamma \in Dom H_{\kappa}\cap  \mathcal{H}_{\psi} \, .
$$
%
Let $P$ be an orthogonal projection such that $PH_0 \subset H_0 P$, that is the range of $P$ reduces $H_0$
(see \cite{kato} p. 278) and denote $P^{\perp } := I-P$, $H_{\kappa}^{\perp}:=P^{\perp }  H_0 P^{\perp }$.
For $\kappa \in \RR$ then define
\begin{equation}\label{functionF}
\mathcal F_{\kappa} (z):= \langle \psi, P^{\perp } (H_{\kappa}^{\perp}-z)^{-1} P^{\perp }\psi \rangle
\end{equation}
and
\[
\mathcal F_{\kappa} (\lambda+i0):= \lim_{ \epsilon  \downarrow 0} \mathcal F_{\kappa} (\lambda +i \epsilon)
\]
Recall that  $\sigma_{ac}(H)$, $\sigma_{s}(H)$  and  $\sigma_{p}(H)$ denote the absolutely and singular and point  spectrum respectively. With the definitions introduced  above, our results on the behavior of the spectra read as follows,
\begin{theorem}\label{absence}
Let $J\subset \RR$ be an  open interval.  If for every $\lambda \in J$
\begin{equation}\label{fermi1}
\Im \mathcal F_0(\lambda +i0) > 0
\end{equation}
then $J\subset \sigma_{ac}(H_{\kappa}^{\psi})$ and $J\cap \sigma_{s}(H_{\kappa}^{\psi}) = \emptyset$,
for all $\kappa \neq 0$.
\end{theorem}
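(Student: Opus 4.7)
The plan is to reduce the statement to a boundary-value analysis of the Borel transform
\[
F_\kappa(z) := \langle \psi, (H_\kappa - z)^{-1}\psi\rangle,\qquad \Im z > 0,
\]
of the spectral measure $\mu_\kappa$ of $H_\kappa^\psi$ for its cyclic vector $\psi$. By the classical Aronszajn--Donoghue / de la Vall\'ee Poussin theory, the singular part of $\mu_\kappa$ is carried by the set $\{\lambda : \limsup_{\epsilon\downarrow 0}\Im F_\kappa(\lambda + i\epsilon) = +\infty\}$, while the absolutely continuous density equals $\tfrac{1}{\pi}\Im F_\kappa(\lambda + i0)$ for a.e. $\lambda$. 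Consequently it will suffice to prove, for every $\lambda\in J$ and $\kappa\neq 0$, that $\limsup_{\epsilon\downarrow 0}\Im F_\kappa(\lambda + i\epsilon) < \infty$, and, for Lebesgue-almost every $\lambda\in J$, that $\Im F_\kappa(\lambda + i0) > 0$.

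The connection with the hypothesis goes through two standard identities. First, the second resolvent identity applied to $\psi$ yields Krein's formula
\[
F_\kappa(z) = \frac{F_0(z)}{1 + \kappa F_0(z)},\qquad \Im F_\kappa(z) = \frac{\Im F_0(z)}{|1 + \kappa F_0(z)|^2},
\]
where $F_0(z) = \langle\psi,(H_0-z)^{-1}\psi\rangle$. Second, since $P$ reduces $H_0$, the block decomposition of $(H_0-z)^{-1}$ along $P$ and $P^\perp$ gives
\[
F_0(z) = \langle P\psi,(PH_0P-z)^{-1}P\psi\rangle + \mathcal F_0(z),
\]
with both summands Herglotz functions (Borel transforms of positive measures). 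In particular $\Im F_0(z) \ge \Im \mathcal F_0(z)$ for every $z$ with $\Im z > 0$.

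To rule out the singular spectrum on $J$, I combine the above with the elementary bound $|1+\kappa F_0|^2 \ge \kappa^2(\Im F_0)^2$ to obtain, for every $\lambda\in J$ and $\epsilon>0$,
\[
\Im F_\kappa(\lambda + i\epsilon) \le \frac{1}{\kappa^2\,\Im F_0(\lambda + i\epsilon)} \le \frac{1}{\kappa^2\,\Im\mathcal F_0(\lambda + i\epsilon)}.
\]
Since $\Im\mathcal F_0(\lambda + i\epsilon)\to\Im\mathcal F_0(\lambda + i0)>0$ as $\epsilon\downarrow 0$ by hypothesis, this forces $\limsup_{\epsilon\downarrow 0}\Im F_\kappa(\lambda + i\epsilon) < \infty$ pointwise on $J$; hence $\mu_{\kappa,s}(J)=0$ and the open interval $J$ is disjoint from $\sigma_s(H_\kappa^\psi)$. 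For the absolutely continuous part, Fatou's theorem for Herglotz functions ensures that $F_0(\lambda + i0)$ is a finite complex number for a.e. $\lambda$; for such $\lambda\in J$ one has $\Im F_0(\lambda + i0)\ge \Im\mathcal F_0(\lambda + i0) > 0$, and $1 + \kappa F_0(\lambda + i0)$ has non-zero imaginary part $\kappa\,\Im F_0(\lambda+i0)$, so Krein's formula yields $\Im F_\kappa(\lambda + i0) > 0$. Thus the ac density of $\mu_\kappa$ is strictly positive a.e. on $J$, which places $J$ inside $\sigma_{ac}(H_\kappa^\psi)$.

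The main delicacy is the disjointness from $\sigma_s$: the hypothesis controls $\mathcal F_0$ only pointwise on $J$, so we need pointwise (not merely a.e.) finiteness of $\limsup_{\epsilon\downarrow 0}\Im F_\kappa(\lambda + i\epsilon)$ throughout $J$. The chain of inequalities above is set up precisely to transport the pointwise positivity of $\Im\mathcal F_0(\lambda + i0)$ into a pointwise finite upper bound on $\Im F_\kappa(\lambda + i\epsilon)$, which is what the Aronszajn--Donoghue characterization of the singular support requires.
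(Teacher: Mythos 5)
Your proof is correct and follows essentially the same route as the paper: decompose $\langle\psi,(H_0-z)^{-1}\psi\rangle$ along the reducing projection to obtain $\Im F_0\ge\Im\mathcal F_0$, and then use the Aronszajn--Donoghue/Krein description of the absolutely continuous and singular parts of the spectral measures of $H_\kappa^{\psi}$ via boundary values of Borel transforms. The only cosmetic difference is that you rule out singular spectrum through the explicit bound $\Im F_\kappa(\lambda+i\epsilon)\le \bigl(\kappa^{2}\,\Im\mathcal F_0(\lambda+i\epsilon)\bigr)^{-1}$, whereas the paper invokes the corresponding Aronszajn--Donoghue statements directly; the underlying argument is the same.
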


\begin{theorem}\label{pp}
Let $J \subset \RR$ be an open interval. Suppose that
\begin{enumerate}
\item
$\Im \mathcal F_0(\lambda +i 0)>0$ for every $\lambda \in J$.
\item
If  $H_0 \varphi =\lambda \varphi $, for  some $\lambda \in J$, then $\langle \varphi, \psi \rangle \neq 0 $ and $\lambda$ is a simple eigenvalue.
\end{enumerate}
Then $J \subset \sigma_{ac}(H_\kappa)$ and $J \cap \sigma_p (H_\kappa)=\emptyset$, for all $\kappa \neq 0$.
\end{theorem}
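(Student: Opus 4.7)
The plan is to reduce Theorem \ref{pp} to Theorem \ref{absence} by exploiting the orthogonal decomposition $\mathcal{H} = \mathcal{H}_\psi \oplus \mathcal{H}_\psi^\perp$. Both subspaces are invariant under $H_\kappa$ for every $\kappa$ (since $\mathcal{H}_\psi$ reduces $H_\kappa$ by the property recalled from \cite{don}), and because $\psi \in \mathcal{H}_\psi$ the rank-one perturbation $\kappa|\psi\rangle\langle\psi|$ vanishes identically on $\mathcal{H}_\psi^\perp$. Consequently $H_\kappa$ decomposes as the orthogonal sum $H_\kappa^\psi \oplus H_0\big|_{\mathcal{H}_\psi^\perp}$, and the spectral parts split accordingly:
$$\sigma_p(H_\kappa) = \sigma_p(H_\kappa^\psi) \cup \sigma_p\bigl(H_0\big|_{\mathcal{H}_\psi^\perp}\bigr), \qquad \sigma_{ac}(H_\kappa) \supset \sigma_{ac}(H_\kappa^\psi).$$
The theorem then amounts to controlling each summand separately on the interval $J$.

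For the cyclic piece, Theorem \ref{absence} applies verbatim using hypothesis (1), yielding $J \subset \sigma_{ac}(H_\kappa^\psi)$ and $J \cap \sigma_s(H_\kappa^\psi) = \emptyset$ for every $\kappa \neq 0$. In particular $J \cap \sigma_p(H_\kappa^\psi) = \emptyset$, and the inclusion $J \subset \sigma_{ac}(H_\kappa)$ is already established without further work.

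It remains to verify that $J \cap \sigma_p\bigl(H_0\big|_{\mathcal{H}_\psi^\perp}\bigr) = \emptyset$, and this is where hypothesis (2) enters. I would argue by contradiction: if some nonzero $\varphi \in \mathcal{H}_\psi^\perp$ satisfied $H_0\big|_{\mathcal{H}_\psi^\perp}\varphi = \lambda\varphi$ with $\lambda \in J$, then $\varphi$ would be an eigenvector of the full operator $H_0$ with eigenvalue in $J$, and hypothesis (2) would force $\langle \varphi,\psi\rangle \neq 0$. But $\psi \in \mathcal{H}_\psi$ and $\varphi \in \mathcal{H}_\psi^\perp$ force $\langle \varphi,\psi\rangle = 0$, contradiction. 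Combined with the previous paragraph this gives $J \cap \sigma_p(H_\kappa) = \emptyset$, completing the proof.

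The whole argument is essentially bookkeeping once Theorem \ref{absence} is in hand; the only subtle point is the role of simplicity in hypothesis (2), which I view as the main (small) obstacle. If a multiple embedded eigenvalue $\lambda \in J$ were allowed, one could form a nonzero linear combination of $H_0$-eigenvectors orthogonal to $\psi$, producing an eigenvector of $H_0\big|_{\mathcal{H}_\psi^\perp}$ with eigenvalue $\lambda$ and defeating the contradiction. Thus the conjunction "simple eigenvalue with $\langle \varphi,\psi\rangle \neq 0$" is precisely what prevents eigenmass from leaking into the orthogonal complement of the cyclic subspace, and is exactly what is needed to upgrade the conclusion of Theorem \ref{absence} from $H_\kappa^\psi$ to $H_\kappa$.
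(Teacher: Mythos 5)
Your proof is correct and follows essentially the same route as the paper: decompose $\mathcal H=\mathcal H_\psi\oplus\mathcal H_\psi^\perp$, invoke Theorem \ref{absence} on the cyclic part to get $J\subset\sigma_{ac}(H_\kappa)$ and kill eigenvalues there, and use hypothesis (2) together with $\psi\perp\mathcal H_\psi^\perp$ to exclude eigenvectors in the complement. The paper phrases this by decomposing an arbitrary eigenvector $\theta$ of $H_\kappa$ rather than splitting the operator as $H_\kappa^\psi\oplus H_0\big|_{\mathcal H_\psi^\perp}$, but the content is the same.
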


\begin{remark} It could happen that $\sigma_{sc}(H_\kappa) \neq \phi$.
\end{remark}
Before going into the proofs, we shall need a preliminary result. Let us recall that
 $\rho(H) :=\mathbb C \setminus\sigma (H)$, where  $\sigma (H)$ is the spectrum of $H$.
 \begin{lemma}\label{reduced}
Let  $H$ be a self-adjoint operator and $P$ an orthogonal projection such that $PH\subset HP$, that is, $P \mathcal H$ reduces $H$. Then
for all $z\in \rho(H)$,
$$
P^{\perp}(H-z)^{-1} P^{\perp}= P^{\perp}(P^{\perp}HP^{\perp}-z)^{-1} P^{\perp}
$$
\end{lemma}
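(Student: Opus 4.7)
The plan is to exploit the direct-sum decomposition induced by the reducing projection. Since $P$ reduces $H$, the projection $P^{\perp}=I-P$ also reduces $H$: it maps $\operatorname{Dom}(H)$ into itself and satisfies $P^{\perp} H\subset HP^{\perp}$. A standard consequence (see Kato p.\ 278) is that $P^{\perp}$ commutes with the resolvent $(H-z)^{-1}$ for every $z\in\rho(H)$, and the restriction $H^{\perp}:=H\restriction_{P^{\perp}\mathcal{H}}$ is self-adjoint on the Hilbert space $P^{\perp}\mathcal{H}$ with $\rho(H)\subset\rho(H^{\perp})$.

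I would then observe that $H^{\perp}$ coincides, as an operator on $P^{\perp}\mathcal{H}$, with $P^{\perp}HP^{\perp}\restriction_{P^{\perp}\mathcal{H}}$: indeed, for $\varphi\in\operatorname{Dom}(H)\cap P^{\perp}\mathcal{H}$ one has $H\varphi=P^{\perp}H\varphi=P^{\perp}HP^{\perp}\varphi$, since $P^{\perp}\varphi=\varphi$ and $H\varphi\in P^{\perp}\mathcal{H}$ (the last inclusion because $P^{\perp}$ reduces $H$). Hence for $z\in\rho(H)$ the operator $P^{\perp}HP^{\perp}-z$ is invertible on $P^{\perp}\mathcal{H}$ with inverse equal to $(H-z)^{-1}\restriction_{P^{\perp}\mathcal{H}}$.

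To finish, I would compute both sides applied to an arbitrary $f\in\mathcal{H}$. On the right-hand side, $P^{\perp}(P^{\perp}HP^{\perp}-z)^{-1}P^{\perp}f$ equals $(H^{\perp}-z)^{-1}P^{\perp}f$ viewed inside $\mathcal{H}$ (the outer $P^{\perp}$ is the identity on $P^{\perp}\mathcal{H}$). On the left-hand side, using $P^{\perp}(H-z)^{-1}=(H-z)^{-1}P^{\perp}$, we get $P^{\perp}(H-z)^{-1}P^{\perp}f=(H-z)^{-1}P^{\perp}f$, and the latter vector lives in $P^{\perp}\mathcal{H}$ (again by commutation), where $(H-z)^{-1}$ agrees with $(H^{\perp}-z)^{-1}$ by the previous paragraph. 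Both expressions therefore coincide, proving the claim.

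The only delicate point is the domain bookkeeping when passing between $H$, $H^{\perp}$, and $P^{\perp}HP^{\perp}$: one must verify that $\operatorname{Dom}(H^{\perp})=\operatorname{Dom}(H)\cap P^{\perp}\mathcal{H}$ is dense in $P^{\perp}\mathcal{H}$ and that $H^{\perp}$ is self-adjoint there, so that the resolvent identity makes sense and $\rho(H)\subset\rho(H^{\perp})$. Once this structural step is in place, the identity follows immediately from the commutation of $P^{\perp}$ with $(H-z)^{-1}$; no further computation is required.
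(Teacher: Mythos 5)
Your proposal is correct and rests on the same ingredients as the paper's proof: the reduction of $H$ by $P^{\perp}\mathcal{H}$, the self-adjointness of the restriction $H\restriction_{P^{\perp}\mathcal{H}}$, and its identification with $P^{\perp}HP^{\perp}$ on that subspace. The paper merely arranges the verification differently — it writes an arbitrary $\psi\in P^{\perp}\mathcal{H}$ as $(H-z)P^{\perp}\varphi$ (using surjectivity of the restricted operator minus $z$) and evaluates both sides on such $\psi$, whereas you pass through the commutation of $P^{\perp}$ with $(H-z)^{-1}$ and the restriction of the resolvent; the two arguments are essentially the same.
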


\begin{proof} Since $H$ is self--adjoint and the range of $P^\perp$ reduces $H$, we have that $H\vert _{ P^\perp \mathcal H}$ is also self--adjoint. By the basic criterium
for self--adjointness Ran$(H\vert _{ P^\perp \mathcal H}-z) = P^\perp \mathcal H$ (see \cite{RSV1} p. 256).

Let $\psi \in P^\perp \mathcal H$ be arbitrary. Then, there exists $\varphi \in DomH\vert _{ P^\perp \mathcal H} = P^{\perp}\mathcal H \cap Dom H$ such that $\psi =(H-z)P^{\perp}\varphi$. Now, $P^\perp (H-z)^{-1} P^\perp \psi =P^\perp (H-z)^{-1} P^\perp (H-z)P^\perp \varphi = P^\perp \varphi$ and
\begin{eqnarray*}
P^\perp ( P^\perp H P^\perp -z)^{-1} P^\perp \psi &=&P^\perp ( P^\perp H P^\perp -z)^{-1} P^\perp (H-z)P^\perp \varphi \\
&=& P^\perp ( P^\perp H P^\perp -z)^{-1}( P^\perp H P^\perp -z)P^\perp \varphi \\
&=& P^\perp \varphi.
\end{eqnarray*}
\end{proof}

 Now, let us prove Theorems \ref{absence} and \ref{pp}.

\begin{proof}[Proof of Theorem \ref{absence}]
Since $P$ and $P^{\perp}$ reduce $H_0$, we have that for any $z \in \mathbb C \setminus\RR$, $P(H_0 -z)^{-1}=(H_0 -z)^{-1}  P$, and the corresponding commutation relation with $P^{\perp}$ (see \cite{kato} Theorem.6.5, p.173, Ch.3.). Therefore,
 \begin{eqnarray*}
 \Im \langle \psi,  (H_0-\lambda -i \epsilon)^{-1}\psi \rangle & = &   \Im \langle P \psi ,  (H_0-\lambda -i \epsilon)^{-1}P \psi \rangle  \\
& + & \Im \langle P^{\perp}\psi,  (H_0-\lambda -i \epsilon)^{-1}P^{\perp}\psi \rangle \\
   & = &   \Im \langle P \psi ,  (H_0-\lambda -i \epsilon)^{-1}P \psi \rangle +\Im \mathcal F_0(\lambda +i\epsilon)  ,
\end{eqnarray*}
where for the last equality we  use Lemma \ref{reduced}.

Next we note that the term $ \Im \langle P \psi ,  (H_0-\lambda -i \epsilon)^{-1}P \psi \rangle $ is always nonnegative. Actually, for any self-adjoint operator $H$,
setting $u=(H-z)^{-1}\gamma$ we have
\begin{eqnarray*}
\Im \langle \gamma, (H-z)^{-1}\gamma \rangle &=&\Im \langle (H-z)u,  u \rangle \\
&=& \Im z \|u\|^2.
\end{eqnarray*}
Hence, $\Im \langle \gamma, (H-z)^{-1}\gamma \rangle$  is nonnegative when $\Im z >0$.  Holomorphic functions  which map the upper half plane into itself called Herglotz, Nevanlinna or Pick functions. The limits of this functions 
when approaching the real axis from above exists a.e., see \cite{simon1} Thm I.4.

Using hypothesis (\ref{fermi1}),  we deduce that for every $\lambda \in J$,
\[
\lim_{\epsilon \downarrow 0}  \Im \langle \psi,  (H_0-\lambda -i \epsilon)^{-1}\psi \rangle \ge  \Im \mathcal F_0(\lambda +i0) > 0.
\]
Now, by the spectral theorem, there exists a measure $\mu_{\psi}$ such that
$$
\langle \psi , (H_0-z)^{-1}\psi \rangle = \int _{\RR} \frac{d\mu_{\psi}(t)}{t-z}
$$
and $H_0$, restricted to the the subspace of cyclicity generated by $\psi$, that is $H_0^\psi$  defined above, is unitarily equivalent to multiplication by the identity in $L_2(\RR, d\mu_{\psi})$.

A result essentially due to Aronszajn and Donoghue (see \cite{don}, Theorem 2 and also \cite{simon1} Theorem II.2 (iii)) states that the absolutely continuous part of $\mu_{\psi}$
is given by $\mu_{\psi ac}(\Delta) = \mu_{\psi}(\Delta  \cap A)$, where
$$
A=\{ \lambda |  0<  \Im \langle \psi,  (H_0-\lambda -i 0)^{-1}\psi \rangle < \infty \} .
$$

Also, the singular part of $\mu_{\psi}$
is given by $\mu_{\psi s}(\Delta) = \mu_{\psi}(\Delta \cap B)$, where
$$
B=\{ \lambda  |   \Im \langle \psi,  (H_0-\lambda -i 0)^{-1}\psi \rangle = \infty \} ,
$$
for any Borel set $\Delta$.

In fact due to Krein's formula (see I.13 in \cite{simon1}), we have that:
\begin{itemize}
\item[a)] $A$ is the support of the a.c. part of the measure $\mu_{\psi}^{\kappa}$ in the expression
$$
\langle \psi,  (H_\kappa-z)^{-1}\psi \rangle = \int _{\RR} \frac{d\mu_{\psi}^{\kappa}(t)}{t-z},
$$
for all $\kappa$ (see \cite{don}).
\item[b)] If $ \Im \langle \psi,  (H_0-\lambda -i 0)^{-1}\psi \rangle = \infty$ then $ \Im \langle \psi,  (H_\kappa-\lambda -i 0)^{-1}\psi \rangle = 0$ for $\kappa \neq 0$.
\end{itemize}
Therefore the result follows.
\end{proof}

\begin{proof}[Proof of Theorem \ref{pp}] We have the decomposition $\mathcal H =\mathcal H_\psi \oplus \mathcal H_ \psi^\perp$, where $ \mathcal H_\psi$ is the cyclic subspace generated by $\psi$. This subspace
reduces $H_\kappa$, for all $\kappa$ (\cite{don}). Let us denote as above by $H_\kappa^\psi$ the part of $H_\kappa $ on $\mathcal H_\psi$ and by ${H_\kappa^\psi}^\perp$ the
part of $H_\kappa $ on $\mathcal H_ \psi^\perp$ . Then,
$$
H_\kappa^\psi : \mathcal H_ \psi \to \mathcal H_ \psi
$$
$$
{H_\kappa^\psi}^\perp : \mathcal H _\psi^\perp \to \mathcal H _\psi^\perp
$$

Since by Theorem \ref{absence} $J \subset \sigma_{ac}(H_\kappa^\psi ) \subset\sigma (H_\kappa)$, we conclude $J \subset \sigma_{ac}(H_\kappa )$ .

Now assume $H_\kappa \theta = \lambda \theta$, with $\lambda \in J$. Write $\theta=\theta_1+\theta_2$, where $\theta_1\in \mathcal H_\psi$,
 $\theta_2\in \mathcal H_\psi^\perp$, $H_\kappa \theta_1=\lambda\theta_1$ and  $H_\kappa \theta_2=\lambda\theta_2$ ($\mathcal H_\psi$ and $\mathcal H_\psi ^\perp$
 reduce $H_\kappa$, for all $\kappa$).

 Since $H_\kappa^\psi$ has pure a.c. spectrum in $J$, by Theorem \ref{absence} it follows that $\theta_1=0$. Since $\theta _2 \in \mathcal H_\psi^\perp$, we
 deduce that $\theta _2 \perp \psi$ and $\langle \theta,  \psi \rangle =0$. Hence,
 $$
 \lambda \theta =H_\kappa \theta = H_0\theta.
 $$
 Therefore, $\lambda$ is an eigenvalue of $H_0$ and $\lambda \in J$. From the fact that $\lambda$ is a simple eigenvalue of $H_0$, we conclude
 that $\theta =c \varphi$ which contradicts the hypothesis $\langle \varphi ,  \psi \rangle  \neq 0$. Therefore, $\lambda$ cannot be an eigenvalue of $H_\kappa$.
\end{proof}

\subsection{Existence of resonance} \label{res}

Now, we state the result concerning the existence of a resonance described by an approximate exponential decay of the survival probability. For the next theorem consider the situation where $H_0$  has a simple eigenvalue  $\lambda_0$, with $H_0 \varphi =\lambda _0 \varphi$. As above, given a
normalized vector  $\psi \in \mathcal H$  such that $\langle \varphi, \psi \rangle \neq 0$, we define
$H_\kappa := H_0 + \kappa | \psi \rangle \langle \psi | \, , \quad \kappa \in \mathbb{R}\, $.
 Let us  assume  there is an interval $I$ containing $\lambda _0$ such that $\mathcal F_{\kappa}(\lambda +i0)$ exists, for all $\lambda \in I$, where
\begin{equation}\label{functionF1}
\mathcal F_{\kappa} (z):= \langle \psi, P^{\perp } (H_{\kappa}^{\perp}-z)^{-1} P^{\perp }\psi \rangle
\end{equation}
and $P=\langle \varphi, \cdot\rangle \varphi$, $P^ \perp =I-P$. By $C^{1,\beta}(J)$ we denote the set of functions $F$ such that $F'$ is $\beta$-H\"older continuous in $J$, cf. section \ref{Boundary limit of Borel transforms}. Then we have,

\begin{theorem}\label{thmrankone}
Let $H_0 \varphi =\lambda _0 \varphi$ where $\lambda_0$ is a simple eigenvalue.
Assume that $\mathcal F_0(\lambda +i0) \in C^{1,\beta}(I)$, with $\beta > 0$  and it satisfies
\begin{equation}\label{fgr}
\mathcal \Im F_0(\lambda_0+i0) >0.
\end{equation}
Let $J \subset I$ be a closed interval such that $\lambda _0$ is an interior point of $J$ and $\mathcal \Im F_0(\lambda +i0) >0$, for all $\lambda \in J$.

Given $g$ a smooth characteristic function supported on $J$ which is identically $1$ in a neighborhood of $\lambda_0$, we have that for all $t \in \mathbb{R}$ and all small enough $\kappa \neq 0$, 
\begin{equation}\label{decexp}
\langle \varphi , e^{-iH_{\kappa}t}g(H_{\kappa}) \varphi \rangle = c_{\kappa} e^{-i\zeta_{\kappa}|t|} + R(t,\kappa)
\end{equation}
with $\zeta_{\kappa}$ and $c_{\kappa}$ described in Theorem \ref{protothm}.

In particular, $\Im \zeta_{\kappa} <0$, $c_{\kappa} = 1+O(\kappa^2 )$ and $| R(t,\kappa)| \leq C\kappa^{2}$. Moreover, $|t|\, |R(t,\kappa)|\le C\kappa^{2\alpha}$, if $\alpha \in (0,1)$ and
 $|t|\, |R(t,\kappa)|\le C\kappa^{2}|\log |\kappa ||$, if $\alpha =1$.
\end{theorem}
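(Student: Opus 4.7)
The strategy is to reduce the left-hand side of \eqref{decexp} to the Fourier-type integral studied in Theorem \ref{protothm} by combining the spectral theorem with a Feshbach-Livsic block reduction and Krein's rank-one formula.

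First, by the spectral theorem,
\begin{equation*}
\langle\varphi, e^{-iH_\kappa t} g(H_\kappa)\varphi\rangle = \int_{\mathbb{R}} e^{-i\lambda t} g(\lambda)\, d\mu_\varphi^\kappa(\lambda),
\end{equation*}
where $\mu_\varphi^\kappa$ is the spectral measure of $\varphi$ for $H_\kappa$. Since $\operatorname{supp}(g)\subset J$ and the Borel transform $\langle\varphi,(H_\kappa-\lambda-i0)^{-1}\varphi\rangle$ will turn out to be uniformly bounded on $J$ for small $\kappa$ (from the explicit formula derived below), standard boundary-value theory (de la Vall\'ee Poussin) will give that $\mu_\varphi^\kappa|_J$ is purely absolutely continuous with density $\pi^{-1}\Im\langle\varphi,(H_\kappa-\lambda-i0)^{-1}\varphi\rangle$. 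One thus rewrites the survival amplitude as
\begin{equation*}
\langle\varphi, e^{-iH_\kappa t} g(H_\kappa)\varphi\rangle = \frac{1}{\pi}\int_{\mathbb{R}} e^{-i\lambda t} g(\lambda) \,\Im\langle\varphi,(H_\kappa-\lambda-i0)^{-1}\varphi\rangle\, d\lambda.
\end{equation*}

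Next, apply the Feshbach-Livsic (Schur complement) reduction with $P := |\varphi\rangle\langle\varphi|$. Writing $\alpha := \langle\varphi,\psi\rangle \neq 0$ and $\eta := P^\perp\psi$, the block-diagonality of $H_0$ under $P,P^\perp$ (since $H_0\varphi = \lambda_0\varphi$ forces $PH_0P^\perp = P^\perp H_0P = 0$ and $PH_0P = \lambda_0 P$) collapses the Schur complement to a scalar and yields
\begin{equation*}
\langle\varphi, (H_\kappa - z)^{-1}\varphi\rangle = \frac{1}{\lambda_0 + \kappa|\alpha|^2 - z - \kappa^2|\alpha|^2 \,\langle\eta,(P^\perp H_\kappa P^\perp - z)^{-1}\eta\rangle}.
\end{equation*}
Since $P^\perp H_\kappa P^\perp = P^\perp H_0 P^\perp + \kappa |\eta\rangle\langle\eta|$ is a rank-one perturbation of $P^\perp H_0 P^\perp$, Krein's formula gives
\begin{equation*}
\langle\eta,(P^\perp H_\kappa P^\perp - z)^{-1}\eta\rangle = \frac{\mathcal{F}_0(z)}{1 + \kappa\,\mathcal{F}_0(z)}.
\end{equation*}
Setting
\begin{equation*}
\lambda_\kappa := \lambda_0 + \kappa|\alpha|^2, \qquad F(\lambda,\kappa) := \frac{|\alpha|^2\, \mathcal{F}_0(\lambda + i0)}{1 + \kappa\, \mathcal{F}_0(\lambda + i0)},
\end{equation*}
the integrand becomes exactly the Lorentz-like expression appearing in \eqref{quasiexpo}.

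It then remains to verify hypotheses (\textbf{H0})-(\textbf{H2}) and invoke Theorem \ref{protothm}. The limit $\lambda_\kappa \to \lambda_0$ is immediate; continuity of $F$ at $(\lambda_0,0)$ and the $C^{1,\beta}$-regularity of $F(\cdot,\kappa)$ uniformly in $\kappa$ are preserved under the M\"obius-type transformation $w \mapsto |\alpha|^2 w/(1+\kappa w)$, since $|1+\kappa\mathcal{F}_0(\lambda+i0)|$ is bounded away from $0$ for $\kappa$ small (using that $\mathcal{F}_0(\cdot+i0)$ is bounded on $J$); finally, the positivity
\begin{equation*}
\Im F(\lambda,\kappa) = \frac{|\alpha|^2 \Im\mathcal{F}_0(\lambda+i0)}{|1+\kappa\mathcal{F}_0(\lambda+i0)|^2} > 0
\end{equation*}
holds on $J$ by the Fermi Golden Rule \eqref{fgr}. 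Theorem \ref{protothm} then delivers \eqref{decexp} with the stated error bounds; the claims $\Im\zeta_\kappa < 0$ and $c_\kappa = 1 + O(\kappa^2)$ follow directly from the explicit formulas for $\zeta_\kappa$ and $c_\kappa$ in Theorem \ref{protothm} together with the bound on $F'$. The main obstacle I foresee is bookkeeping rather than conceptual: pushing the Schur/Krein algebra through cleanly and confirming that the H\"older constants of $F(\cdot,\kappa)$ are genuinely uniform in $\kappa$. No new analytic input beyond Theorem \ref{protothm} is needed.
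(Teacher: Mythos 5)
Your proposal is correct and follows essentially the same route as the paper's proof: Stone's formula for the survival amplitude, the Feshbach--Livsic reduction with $P=|\varphi\rangle\langle\varphi|$, Krein's formula giving $\mathcal F_{\kappa}=\mathcal F_{0}/(1+\kappa\,\mathcal F_{0})$, and verification of ({\bf H0})--({\bf H2}) so that Theorem \ref{protothm} applies with $\lambda_{\kappa}=\lambda_{0}+\kappa\,|\langle\varphi,\psi\rangle|^{2}$ and $F(\lambda,\kappa)=|\langle\varphi,\psi\rangle|^{2}\,\mathcal F_{\kappa}(\lambda+i0)$. The only cosmetic difference is that you justify passing to boundary values via absolute continuity of the spectral measure on $J$, while the paper takes the limit inside Stone's formula using the uniform positivity and boundedness coming from \eqref{star}, and it handles the regularity of $\mathcal F_{\kappa}(\cdot+i0)$ via its Lemma \ref{lemC1} rather than your M\"obius-map remark.
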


Before proving Theorem \ref{thmrankone}, we need the following lemmas.

Let $H$ be a self-adjoint operator defined in a Hilbert space $\mathcal{H}$ and $P$ be an orthogonal projector
with $\text{Ran}P \subset \text{Dom}(H)$, with Ran$P$ of finite dimension.
\begin{lemma}\label{fesh} (Feshbach-Livsic formula) Consider $ z \in \mathbb{C} , \Im\,  z \neq 0$. Then
the operator $ M:=P(H-z)P-PHP^{\perp}(H^{\perp} - z)^{-1}P^{\perp}HP $ is invertible and as an operator $M: \text{Ran}P \to  \text{Ran}P$ and
$$
P(H-z)^{-1}P = P\left(P(H-z)P-PHP^{\perp}(H^{\perp} - z)^{-1}P^{\perp}HP \right)^{-1}P \, .
$$
\end{lemma}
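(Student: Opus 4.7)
My plan is to prove the Feshbach-Livsic formula through the standard block matrix (Schur complement) argument adapted to the unbounded setting. The key structural idea is to exploit the decomposition $\mathcal{H} = \text{Ran}\,P \oplus \text{Ran}\,P^{\perp}$ and to solve the resolvent equation $(H-z)u = f$ with $f \in \text{Ran}\,P$ by eliminating the $P^{\perp}$-component.

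First I would verify that the block pieces are well-behaved. Since $\text{Ran}\,P \subset \text{Dom}(H)$ and $P$ has finite rank, $HP$ is a bounded operator of finite rank. By self-adjointness of $H$, its adjoint $PH$ extends to a bounded operator on $\mathcal{H}$, and hence $PHP$, $PHP^{\perp}$ and $P^{\perp}HP$ are all bounded finite-rank operators. This also lets one check that $H^{\perp} := P^{\perp}HP^{\perp}$, with natural domain $\text{Dom}(H)\cap \text{Ran}\,P^{\perp}$, is self-adjoint as an operator on $\text{Ran}\,P^{\perp}$: it is symmetric, densely defined (because $\phi = P^{\perp}\phi + P\phi$ with $P\phi \in \text{Dom}(H)$ shows $\text{Dom}(H)\cap \text{Ran}\,P^{\perp}$ is dense in $\text{Ran}\,P^{\perp}$), and it differs from $H$ restricted appropriately by a bounded symmetric finite-rank perturbation. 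Consequently $(H^{\perp}-z)^{-1}$ is a bounded operator on $\text{Ran}\,P^{\perp}$ whenever $\Im z \neq 0$, and $M$ is well-defined on $\text{Ran}\,P$.

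Next, given $f \in \text{Ran}\,P$, I would set $u := (H-z)^{-1} f \in \text{Dom}(H)$, decompose $u = u_1 + u_2$ with $u_1 = Pu$ and $u_2 = P^{\perp}u$, and project the equation $(H-z)u = f$ separately onto $\text{Ran}\,P$ and $\text{Ran}\,P^{\perp}$. The $P^{\perp}$-projection yields $P^{\perp}HPu_1 + (H^{\perp}-z)u_2 = 0$, whence $u_2 = -(H^{\perp}-z)^{-1} P^{\perp}HP\,u_1$. Substituting into the $P$-projection gives
\[
\bigl(P(H-z)P - PHP^{\perp}(H^{\perp}-z)^{-1} P^{\perp}HP\bigr)u_1 \,=\, f,
\]
that is, $Mu_1 = f$. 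This shows $M: \text{Ran}\,P \to \text{Ran}\,P$ is surjective, and since $\text{Ran}\,P$ is finite dimensional, surjectivity forces invertibility. Finally, $Pu = u_1 = M^{-1}f$ translates to $P(H-z)^{-1}Pf = PM^{-1}Pf$ for every $f \in \text{Ran}\,P$, which is the claimed identity.

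The main subtlety, which I expect to be the principal obstacle, is the justification that $H^{\perp}$ is self-adjoint on $\text{Ran}\,P^{\perp}$ so that the resolvent appearing inside $M$ is unambiguous; this is precisely where the hypotheses $\text{Ran}\,P \subset \text{Dom}(H)$ and $\dim\text{Ran}\,P < \infty$ are used. Once this point is settled, the remainder of the proof is a routine algebraic manipulation inside the block decomposition, and the invertibility of $M$ on the finite-dimensional range of $P$ is a free byproduct of the invertibility of $H-z$ for $\Im z \neq 0$.
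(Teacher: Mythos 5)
Your proof is correct: the elimination of the $P^{\perp}$-component via the resolvent equation, the self-adjointness of $H^{\perp}$ on $\mathrm{Ran}\,P^{\perp}$ (as a bounded symmetric finite-rank perturbation, using $\mathrm{Ran}\,P\subset\mathrm{Dom}(H)$ and the fact that $\mathrm{Ran}\,P^{\perp}$ reduces $P^{\perp}HP^{\perp}$), and the deduction of invertibility of $M$ from surjectivity on the finite-dimensional space $\mathrm{Ran}\,P$ all hold up. The paper itself offers no proof but simply cites Howland's article on the Livsic matrix, and the argument given there is essentially the same block-decomposition (Schur complement) computation you carried out.
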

See \cite{How} for a proof.

\begin{lemma}\label{lemC1}
 Let $f \in C^{1,\beta}(I)$ where $I\subset \mathbb{R}$ an open interval. Assume that $f(x) \neq 0$ for all $x \in I$. Then $(1/f) \in C^{1,\beta }(J)$ where $J$
 is a closed interval contained in $I$.
\end{lemma}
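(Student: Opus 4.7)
The plan is to reduce the lemma to a check that the derivative of $1/f$ is $\beta$-Hölder on $J$. Since $J$ is a closed bounded interval contained in the open interval $I$, the set $J$ is compact. The function $f$ is continuous on $I$ and nowhere zero, so by compactness there exists $m>0$ with $|f(x)|\geq m$ for every $x\in J$. In particular $1/f$ is well-defined and differentiable on an open neighborhood of $J$ inside $I$, and the classical formula
\[
\left(\frac{1}{f}\right)'(x) = -\,\frac{f'(x)}{f(x)^2}
\]
holds on $J$.

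Next I would argue that the right-hand side lies in $C^{0,\beta}(J)$, which is exactly the condition needed for $1/f$ to belong to $C^{1,\beta}(J)$. The numerator $f'$ is $\beta$-Hölder on $I$ by hypothesis, hence on $J$. For the denominator, I would first note that $f\in C^1(I)$ implies $f$ is Lipschitz on the compact set $J$, and therefore $f^2=f\cdot f$ is also Lipschitz on $J$ (product of bounded Lipschitz functions). Combining the Lipschitz estimate for $f^2$ with the uniform lower bound $|f^2|\geq m^2$ yields
\[
\left|\frac{1}{f(x)^2}-\frac{1}{f(y)^2}\right|
=\frac{|f(y)^2-f(x)^2|}{|f(x)^2 f(y)^2|}
\leq \frac{L}{m^4}\,|x-y|,
\]
where $L$ is the Lipschitz constant of $f^2$ on $J$. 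Thus $1/f^2$ is Lipschitz on $J$, and a fortiori $\beta$-Hölder for any $\beta\in(0,1]$.

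Finally I would invoke the elementary fact that the product of two bounded $\beta$-Hölder functions on $J$ is again $\beta$-Hölder: if $u,v$ are bounded with Hölder seminorms $[u]_\beta,[v]_\beta$, then
\[
|u(x)v(x)-u(y)v(y)| \leq \|u\|_\infty [v]_\beta |x-y|^\beta + \|v\|_\infty [u]_\beta |x-y|^\beta.
\]
Applying this to $u=f'$ and $v=1/f^2$ (both bounded on the compact set $J$) shows that $-f'/f^2\in C^{0,\beta}(J)$, which is the desired conclusion.

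There is really no serious obstacle here; the only point requiring mild care is to make sure one is working on the compact $J$ (not the open $I$) before extracting the uniform lower bound on $|f|$ and the Lipschitz bound on $f^2$. Once that is in place, the algebra of Hölder classes takes care of everything.
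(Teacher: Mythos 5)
Your proof is correct and follows essentially the same route as the paper: both bound $|f|$ away from zero on the compact interval $J$, use the mean value theorem to get a Lipschitz (hence $\beta$-H\"older) bound on the part coming from $f^2$, and use the $\beta$-H\"older continuity of $f'$ for the remaining factor. The only difference is cosmetic: you phrase the final step through the product rule for H\"older seminorms, while the paper expands the difference $(1/f)'(x)-(1/f)'(y)$ directly, but the underlying estimates coincide.
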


\begin{proof}
Since $f(x) \neq 0$ for all $x \in I$, the function $\frac{1}{f}$ is $C^1$. To prove the $\beta$- H\"older continuity in $I$, we compute,
$$
(\frac{1}{f})'(x)- (\frac{1}{f})'(y)=\frac{f'(y)(f^2(x)-f^2(y))\,+ \,f^2(y)(f'(y)-f(x))}{f^2(x) f^2 (y)}
$$
For $x,y$ in $J$, by continuity the denominator is bounded away from zero.

On the other hand, by continuity of $f$ and $f'$ and the Mean Value Theorem, the first term in the numerator satisfies,
$$
|f'(y)|\, |f(x)+f(y)|\, |f(x)-f(y)|\le c |x-y|,
$$
Because Lipschitz continuity implies H\"older continuity, we conclude that the first term is $\beta$-H\"older continuous.

The second term is bounded above by
$$
c|f'(x)-f'(y)|
$$
which finishes the proof.
\end{proof}

\begin{proof}[Proof of Theorem \ref{thmrankone}] Following Krein's formula, see \cite{simon1} I.13 and \cite {don}, we obtain that
$$
{\mathcal{F}}_{\kappa}(z) = \frac{{\mathcal{F}}_0(z)}{1+\kappa \,{\mathcal{F}}_0(z)}\, .
$$
Therefore ${\mathcal{F}}_{\kappa}(\lambda+i0) $ exists and is finite for all $\lambda \in \bar{J}$ and $\kappa \in\newblock \mathbb{R}$.
By hypothesis and Lemma~\ref{lemC1} the function
$\lambda \to {\mathcal{F}}_{\kappa}(\lambda + i0)$ belongs to $C^{1,\beta}(J)$.
Actually, $(\lambda,\kappa) \to {\mathcal{F}}_{\kappa}(\lambda + i0)$ is a $ C^{1,\beta}(J\times \mathbb{R})$ function. Note that
for all  $ (\lambda,\kappa) \in   J\times \mathbb{R}$,
\begin{equation}\label{star}
\Im  {\mathcal{F}}_{\kappa}(\lambda + i0)  = \frac{ \Im {\mathcal{F}}_0(\lambda + i0)  }{|1+  \kappa  {\mathcal{F}}_0(\lambda + i0) |^2   } > 0
\end{equation}
Since $\lambda _0$ is simple, we can use the Feshbach-Livsic formula, see Lemma~\ref{fesh}, to obtain
\begin{eqnarray*}
\langle \varphi, (H_{\kappa} - z )^{-1} \varphi \rangle & = & \frac{1}
{\langle \varphi, (H_{\kappa}-z)\varphi \rangle - \langle  \varphi, PH_{\kappa}P^{\perp}(H_{\kappa}^{\perp}-z)^{-1} P^{\perp}H_{\kappa}P\varphi
 \rangle}\\
   & = & \frac{1}{\lambda_0 + \kappa \, |\langle \varphi, \psi \rangle  |^2  -z - \kappa^2 \,|\langle \varphi, \psi \rangle  |^2 {\mathcal{F}}_{\kappa}(z) }
\end{eqnarray*}
From the above identity and \eqref{fgr} we obtain that for all $\kappa \neq 0$, for all $\lambda \in \bar{J}$ the limit
$  \langle \varphi, (H_{\kappa} - \lambda -i0 )^{-1} \varphi \rangle $ exists  and
$\lambda \to \langle \varphi,(H_{\kappa} - \lambda -i0 )^{-1} \varphi \rangle $ is a $C^{1,\beta}(I)$ function.

Also, by the Stone's formula we know that
\begin{multline}\label{stone}
\langle \varphi, e^{-iH_{\kappa}t} g(H_{\kappa}) \varphi \rangle
  = \lim_{\epsilon \downarrow 0} \frac{1}{\pi}\, \int_{\mathbb{R}} g(\lambda ) e^{-i\lambda t} \Im  \langle  \varphi , (H_{\kappa}-\lambda - i\epsilon)^{-1}\varphi \rangle \, d\lambda \\
   =   \frac{1}{\pi}\, \int_{\mathbb{R}} g(\lambda ) e^{-i\lambda t} \Im  \left( \frac{1}{   \lambda_0 - \lambda + \kappa \,|\langle \varphi,\psi \rangle |^2   - \kappa^2 |\langle \varphi,\psi \rangle |^2   {\mathcal{F}}_{\kappa}(\lambda +i0) } \right)\, d\lambda \, .
\end{multline} since the limit can be taken inside the integral by \eqref{star}.

To finish the proof, apply Theorem~\ref{protothm} to \eqref{stone} with $\lambda_{\kappa}= \lambda_0 + \kappa |\langle \varphi,\psi \rangle |^2 $ and
$F(\lambda, \kappa) =  |\langle \varphi,\psi \rangle |^2   {\mathcal{F}}_{\kappa}(\lambda +i0)$.
\end{proof}

\subsection{Spectral Concentration and Sojourn time}\label{so}

The results of Section \ref{res} bear the two following straightforward consequences:
\begin{corollary}\label{spectralconc} Under the hypotheses of Theorem \ref{thmrankone}, we have that for any $t\in {\mathbb R},$
\begin{equation}\label{concentration}
\lim_{\kappa \to 0} \langle  \varphi ,  e^{-i \,\frac{1}{\kappa^2 \Gamma_{\kappa}} (H_{\kappa} -\Re \zeta_{\kappa})\, |t|}  g(H_{\kappa})\varphi \rangle = e^{- |t| }
\end{equation}
where $\Gamma_{\kappa} = |\langle \psi ,\varphi \rangle |^2 \,\Im \mathcal F(\lambda _{\kappa}^{\infty},\kappa)$.
\end{corollary}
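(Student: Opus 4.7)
The plan is to reduce the claim directly to Theorem \ref{thmrankone} by a rescaling-and-shift trick. Fix $t \in \mathbb{R}$ and set $s_\kappa := |t|/(\kappa^2 \Gamma_\kappa) \ge 0$. First I would apply Theorem \ref{thmrankone} at time $s_\kappa$ to rewrite
$$\langle \varphi, e^{-iH_\kappa s_\kappa} g(H_\kappa) \varphi \rangle = c_\kappa e^{-i\zeta_\kappa s_\kappa} + R(s_\kappa,\kappa).$$
Then I would multiply through by $e^{i\Re\zeta_\kappa s_\kappa}$; since $\Re\zeta_\kappa$ is a real scalar that commutes with $H_\kappa$, the functional calculus gives
$$e^{i\Re\zeta_\kappa s_\kappa} e^{-iH_\kappa s_\kappa} = e^{-i(H_\kappa - \Re\zeta_\kappa)s_\kappa},$$
so the left-hand side becomes precisely the expression $\langle \varphi, e^{-i(H_\kappa - \Re\zeta_\kappa) s_\kappa} g(H_\kappa)\varphi\rangle$ appearing in (\ref{concentration}). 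On the right-hand side, the principal term turns into $c_\kappa e^{\Im \zeta_\kappa s_\kappa}$, which is a genuine real, non-positive exponential.

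Next I would identify the limit as $\kappa \to 0$. The proof of Theorem \ref{thmrankone} invokes Theorem \ref{protothm} with $F(\lambda,\kappa) = |\langle \varphi, \psi\rangle|^2 \mathcal F_\kappa(\lambda + i0)$, so $\Im F(\lambda_\kappa^\infty,\kappa) = \Gamma_\kappa$. The asymptotics (\ref{asympt}) then give $\Im\zeta_\kappa/(\kappa^2 \Gamma_\kappa) \to -1$, hence $\Im\zeta_\kappa \cdot s_\kappa \to -|t|$. Combined with $c_\kappa = 1 + O(\kappa^2) \to 1$ (also from Theorem \ref{thmrankone}), this yields
$$c_\kappa e^{\Im\zeta_\kappa s_\kappa} \xrightarrow[\kappa \to 0]{} e^{-|t|},$$
which is exactly the target expression.

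The only subtle point — and the place where I expect to need to be careful — is handling the error term $e^{i\Re\zeta_\kappa s_\kappa} R(s_\kappa,\kappa)$. Since $s_\kappa$ behaves like $\kappa^{-2}$ and thus diverges, the sharper time-weighted bounds $|t||R(t,\kappa)| \le C\kappa^{2\alpha}$ are useless here; what saves us is the first estimate in Theorem \ref{thmrankone}, namely the $t$-uniform bound $|R(t,\kappa)| \le C\kappa^2$. This gives $|e^{i\Re\zeta_\kappa s_\kappa} R(s_\kappa,\kappa)| \le C\kappa^2 \to 0$ regardless of how large $s_\kappa$ is. Combining the three observations yields (\ref{concentration}); no new ingredients beyond Theorem \ref{thmrankone} and the asymptotics (\ref{asympt}) are required.
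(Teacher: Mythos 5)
Your argument is correct and is essentially the paper's own proof: multiply the expansion of Theorem \ref{thmrankone} by $e^{i\Re\zeta_\kappa |t|}$, rescale time by $1/(\kappa^2\Gamma_\kappa)$, and kill the error term with the $t$-uniform bound $|R(t,\kappa)|\le C\kappa^2$. If anything, you are slightly more careful than the paper, which writes the principal term as $c_\kappa e^{-\kappa^2\Gamma_\kappa|t|}$ rather than $c_\kappa e^{\Im\zeta_\kappa|t|}$, while you correctly invoke (\ref{asympt}) to pass from $\Im\zeta_\kappa\, s_\kappa$ to $-|t|$.
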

\begin{proof}
By multiplying the equation (\ref{decexp}) by $e^{i\Re \zeta_{\kappa} |t|}$, we obtain,
$$
\langle \varphi , e^{-i(H_{\kappa}-\Re \zeta_{\kappa})|t|}g(H_{\kappa}) \varphi \rangle = c_{\kappa} e^{-\kappa^2 \Gamma_{\kappa} |t|} + e^{i\Re \zeta_{\kappa} |t|}R(t,\kappa)\, .
$$
After scaling the time, i.e. replacing $t$ by $\frac{t}{\kappa ^2 \Gamma_{\kappa}}$, it follows that
$$
\langle \varphi , e^{-i\,\frac{1}{\kappa^2 \Gamma_{\kappa}} (H_\kappa -\Re \zeta_{\kappa}) |t|}g(H_{\kappa}) \varphi \rangle = c_{\kappa} e^{- |t|} + e^{i \Re \zeta_{\kappa} \frac{|t|}{\Gamma_{\kappa}}}R(\frac{t}{\kappa^2\Gamma_{\kappa}}, \kappa)\, .
$$
The corollary follows now from the estimates on the error term in Theorem \ref{thmrankone}.
\end{proof}
\begin{remark} Note that $| \Gamma_{\kappa}-\Gamma_0| \leq C |\kappa |$ with $\Gamma_0 = \Im F(\lambda _0,0)$.
\end{remark}

Formula (\ref{concentration}) is related to Kato's spectral concentration, see \cite{davies}, \cite{green}, \cite{kato}. The connection between this type of formula and the spectral concentration has been established for isolated eigenvalues (of the unperturbed operator) in \cite{davies} and extended to embedded eigenvalues in \cite{green}.

Given any Hamiltonian $H$,  the quantity $|\langle \varphi, e^{-iHt}\varphi \rangle |^2$ measures the probability of finding the system in its initial state $\varphi$, at time $t$. Hence,
\begin{equation}\label{sojourn}
\tau_H (\varphi)  \equiv \int_{-\infty}^{\infty}|\langle \varphi, e^{-iHt}\varphi \rangle |^2 dt
\end{equation}
represents the expected amount of time the system spends in its initial state. As remarked in \cite{lavine2}, one expects that in presence of a resonance
near $\lambda $, there exists a state $\varphi$ whose sojourn time $\tau_H (\varphi)$ is very large and such that the spectral measure of $H$ in the state $\varphi$ is concentrated near $\lambda$. An explicit lower bound on the sojourn time appears in \cite{asch} also in the case of rank one perturations.

\begin{corollary}\label{sojourn} Under the hypotheses of Theorem \ref{thmrankone}, we have that for any $0< |\kappa |\leq \kappa_*,$
$$
\left| \tau_{H_{\kappa}} (\varphi_{\kappa})- \frac{1}{\kappa^2\Gamma_{\kappa}} \right| \le C\, \left\{
\begin{array}{lcc} |\kappa |^{2\alpha -1} & \text{if} & 0< \alpha < 1/2 \\
1 & \text{if} & \alpha \geq 1/2
\end{array}
\right. \, ,
$$
for some $C>0$, where $\varphi_{\kappa} := \sqrt{g}(H_{\kappa}) \varphi$ and $\Gamma_{\kappa} = |\langle \psi ,\varphi \rangle |^2 \,\Im \mathcal F(\lambda _{\kappa}^{\infty},\kappa)$.
\end{corollary}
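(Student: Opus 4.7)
My plan is to reduce the sojourn-time integral to the expansion provided by Theorem~\ref{thmrankone} and then estimate each resulting piece. Since $g \geq 0$ is real-valued, $\sqrt{g}(H_\kappa)$ is self-adjoint and commutes with $e^{-iH_\kappa t}$, so
\[ \langle \varphi_\kappa, e^{-iH_\kappa t}\varphi_\kappa\rangle \;=\; \langle \varphi, e^{-iH_\kappa t} g(H_\kappa)\varphi\rangle \;=\; c_\kappa e^{-i\zeta_\kappa|t|} + R(t,\kappa) \]
by Theorem~\ref{thmrankone}. Squaring the modulus splits $|\langle \varphi_\kappa, e^{-iH_\kappa t}\varphi_\kappa\rangle|^2$ into the principal term $|c_\kappa|^2 e^{2\Im\zeta_\kappa |t|}$, a cross term $2\Re\{c_\kappa e^{-i\zeta_\kappa|t|}\overline{R(t,\kappa)}\}$, and $|R(t,\kappa)|^2$; integrating each over $\mathbb{R}$ yields three contributions that I would handle separately.

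For the principal contribution,
\[ \int_\mathbb{R} |c_\kappa|^2 e^{2\Im\zeta_\kappa|t|}\,dt \;=\; \frac{|c_\kappa|^2}{|\Im\zeta_\kappa|}. \]
From Theorem~\ref{protothm}, $c_\kappa^{-1} = 1 + \kappa^2 F'(\lambda_\kappa^\infty, \kappa) = 1 + O(\kappa^2)$, whence $|c_\kappa|^2 = 1 + O(\kappa^2)$. By \eqref{zetax}, $\Im\zeta_\kappa = -\kappa^2 \Re(c_\kappa)\,\Im F(\lambda_\kappa^\infty,\kappa)$, and since in the proof of Theorem~\ref{thmrankone} one takes $F = |\langle\varphi,\psi\rangle|^2\,\mathcal F_\kappa$, we have $\Im F(\lambda_\kappa^\infty,\kappa) = \Gamma_\kappa$, hence $|\Im\zeta_\kappa| = \kappa^2\,\Gamma_\kappa\,(1 + O(\kappa^2))$. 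Together these give
\[ \frac{|c_\kappa|^2}{|\Im\zeta_\kappa|} \;=\; \frac{1}{\kappa^2\Gamma_\kappa} + O(1), \]
using that $\Gamma_\kappa$ is bounded below away from $0$ for small $\kappa$ (by the Remark preceding the corollary).

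For the remainder terms, I would combine the two bounds of Theorem~\ref{protothm} as $|R(t,\kappa)| \leq C\min(\kappa^2,\,\kappa^{2\alpha}/|t|)$ for $\alpha\in(0,1)$ (the case $\alpha=1$ only adds a $|\log|\kappa||$ factor). Splitting the $t$-integration at $t_* = \kappa^{2\alpha-2}$, where the two bounds coincide, yields $\int_\mathbb{R}|R(t,\kappa)|^2\,dt = O(\kappa^{2\alpha+2})$. For the cross term, Cauchy--Schwarz gives
\[ \Bigl|\int_\mathbb{R} c_\kappa e^{-i\zeta_\kappa|t|}\overline{R(t,\kappa)}\,dt\Bigr| \;\leq\; \frac{|c_\kappa|}{|\Im\zeta_\kappa|^{1/2}}\,\|R\|_{L^2} \;=\; O(\kappa^{-1})\cdot O(\kappa^{\alpha+1}) \;=\; O(\kappa^\alpha). \]
Both contributions are $o(1)$.

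Collecting, $\tau_{H_\kappa}(\varphi_\kappa) - (\kappa^2\Gamma_\kappa)^{-1} = O(1)$ for every $\alpha > 0$. For $\alpha \geq 1/2$ this is exactly the stated estimate; for $0 < \alpha < 1/2$, $|\kappa|^{2\alpha-1}\to\infty$ as $\kappa\to 0$, so the $O(1)$ bound is a fortiori dominated by $C|\kappa|^{2\alpha-1}$. The only subtle step is the cancellation $|c_\kappa|^2/|\Im\zeta_\kappa| - 1/(\kappa^2\Gamma_\kappa) = O(1)$: each of the two terms is individually of order $\kappa^{-2}$, so one must expand the real and imaginary parts of $c_\kappa$ and $\zeta_\kappa$ one order beyond the leading term, relying on the explicit formulas derived in Step~1 of the proof of Theorem~\ref{protothm}. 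Everything else is routine $L^p$ bookkeeping.
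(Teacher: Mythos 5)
Your proof is correct, and it is essentially the paper's own argument: write $\langle\varphi_\kappa,e^{-iH_\kappa t}\varphi_\kappa\rangle=c_\kappa e^{-i\zeta_\kappa|t|}+R(t,\kappa)$ via Theorem \ref{thmrankone}, control the cross term by Cauchy--Schwarz (the paper phrases this as $\bigl|\,\|{\mathcal I}\|_2^2-|c_\kappa|^2\|e^{-i\zeta_\kappa|t|}\|_2^2\bigr|\le\|R\|_2(\|R\|_2+2|c_\kappa|\,\|e^{-i\zeta_\kappa|t|}\|_2)$), and compute the $L^2$ norm of the decaying exponential. Two points where you go beyond the paper are worth noting. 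First, you estimate $\|R\|_2$ by interpolating the two pointwise bounds with the optimal splitting time $t_*=\kappa^{2\alpha-2}$, getting $\|R\|_2\lesssim|\kappa|^{\alpha+1}$ and hence an $O(1)$ error uniformly in $\alpha\in(0,1]$; the paper's stated rates $|\kappa|^{2\alpha-1}$ for $\alpha<1/2$ correspond to the cruder split at $|t|=1$, i.e.\ $\|R\|_2\lesssim|\kappa|^{2\alpha}$, so your bound is in fact stronger than the statement, and your a fortiori deduction of the stated inequality (using $|\kappa|^{2\alpha-1}\ge\kappa_*^{2\alpha-1}>0$) is legitimate. Second, you explicitly track the discrepancy between $|\Im\zeta_\kappa|=\kappa^2\Gamma_\kappa\,\Re c_\kappa$ and $\kappa^2\Gamma_\kappa$, showing $|c_\kappa|^2/|\Im\zeta_\kappa|=1/(\kappa^2\Gamma_\kappa)+O(1)$ using $\Gamma_\kappa\ge c>0$; the paper's line ``$\|e^{-i\zeta_\kappa|t|}\|_2^2=1/(\Gamma_\kappa\kappa^2)$'' is exact only up to this $O(1)$ correction, which your bookkeeping makes transparent. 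No gaps.
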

Observe that $\varphi_{\kappa}\rightarrow \varphi$ when $\kappa\rightarrow0$.
\begin{proof} We denote by $\|\cdot \|_2$ the Hilbert norm on $L_2({\mathbb R},dt)$. For any $0< |\kappa |\leq \kappa_*,$
\begin{align*}
| \|{\mathcal I}(t,\kappa)\|_2^2\,-\, |c_{\kappa}|^2 \| e^{-i\zeta_{\kappa}|t|}\|_2^2 | &\le \|R(t,\kappa)\|_2 \left( \|{\mathcal I}(t,\kappa)\|_2\,+\, |c_{\kappa}| \|e^{-i\zeta_{\kappa}|t|}\|_2\right) \\
& \le \|R(t,\kappa)\|_2 \left( \|R(t,\kappa)\|_2\,+\, 2 |c_{\kappa} | \|e^{-i\zeta_{\kappa}|t|}\|_2\right) \, .
\end{align*}
where $\mathcal I$ is given by \eqref{quantity}.
Now, $\|e^{-i\zeta_{\kappa} |t|}\|_2^2=\frac{1}{\Gamma_{\kappa} \kappa^2}$. The result follows from the estimates on the error term in Theorem \ref{thmrankone}.
\end{proof}


\section{Finite spectral multiplicity}

First, we recall the concept of spectral representation and  spectral multiplicity for a general self-adjoint operator.

\subsection{Reduced operator. Spectral measures}

Let $H: \mathcal{H} \to \mathcal{H}$ be a self-adjoint operator in a Hilbert space $\mathcal{H}$. From the spectral representation, see \cite{weidman2}  Theorem~7.18 p.195,
 we know that there exists a family of measures  $\{ \rho_{\alpha}: \alpha \in \Lambda \}$ and a unitary operator $\tilde{U}$ such that the following diagram commutes,
$$
\begin{CD}
\mathcal{H} @>H>> \mathcal{H} \\
@V\tilde{U}VV   @AA\tilde{U}^{-1}A \\
{\oplus}_{\alpha \in \Lambda} L^2({\mathbb{R}},\rho_{\alpha})  @>M_{id}>> {\oplus}_{\alpha \in \Lambda} L^2({\mathbb{R}},\rho_{\alpha})
\end{CD}
$$
That is, $H = \tilde{U}^{-1}M_{id}\tilde{U}$ where $M_{id}$ is the maximal operator of multiplication by the identity  $id$, i.e., $M_{id}f(x) = xf(x)$.

Assume that the operator $H$ has finite spectral multiplicity, that is $\Lambda$ is a finite set with $N$ elements. We
 can construct a matrix measure distribution $\rho$  and a space of vector valued functions $L_2({\mathbb{R}},{\mathbb{C}^N},\rho)$, such that
$H = U_H^{-1}M_{id} U_H$ where $U_H : \mathcal{H} \to L^2({\mathbb{R}},{\mathbb{C}^N},\rho)$ is unitary and $M_{id} f (x) = xf(x)$ is defined from
$L^2({\mathbb{R}},{\mathbb{C}^N},\rho) \to L^2({\mathbb{R}},{\mathbb{C}^N},\rho)$. For these concepts see \cite{ag} Vol.~2 section~71,  \cite{weidmann1}  section~10,  \cite{naimarkV1} p.101 , \cite{ag} Section 72 and \cite{weidmann1} Theorem~8.7.

\subsection{Multiplicity of eigenvalues}

Since $H$ is unitarily equivalent to the operator multiplication for the identity
on $L_2({\mathbb{R}},\rho)$, with $\rho$ a matrix measure distribution, it is interesting to characterize the multiplicity of a given eigenvalue of $H$ in terms of $\rho$.
 For fixed $\lambda$, denote by  $\delta_\lambda$ the measure
\begin{equation}
  \label{eq:dirac-measure}
  \delta_\lambda(\Delta):=
  \begin{cases}
    1 & \lambda\in\Delta\\
    0 & \lambda\not\in\Delta
  \end{cases}
  \end {equation}
where  $\Delta\subset\mathbb{R} $ is a Borel set. In what follows $M = (m_{ij})$ shall denote a non-negative symmetric  matrix with $m_{ij}\in\mathbb{C}$ where $i,j= 1,2,...N$.  We shall use the notation $M\delta_{\lambda}$ for the  matrix measure distribution with entries $ (m_{ij}\delta_{\lambda})$.
\begin{lemma}
Consider  $\lambda_0 \in \mathbb{R}$, $\mu$ a matrix measure distribution
 defined on $\mathbb{R}$, with $ \mu( \{ \lambda_0 \})=0$ and $M $ a non-negative constant matrix as above.
  Let us take  $M_{id} $ defined on the space $L^2(\mathbb{R},\mathbb{C}^N, d\mu+M\delta_{\lambda_0})$.

  Then $\lambda_0$ is an eigenvalue of $M_{id}$ of multiplicity $p \in \{1,2,\ldots, N \}$ if and only if
  $\text{Rank} M = p$ if and only if $ \text{dim (Ker$M$)} =N-p $.
  \end{lemma}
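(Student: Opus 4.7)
The equivalence of $\mathrm{Rank}\,M=p$ and $\dim(\ker M)=N-p$ is just the rank-nullity theorem, so the real content is to identify the $\lambda_0$-eigenspace of $M_{id}$ with a quotient of $\mathbb{C}^N$ by $\ker M$. My plan is to first describe precisely what an element of the eigenspace looks like, and then compute its dimension.

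I would start by characterizing eigenvectors. Suppose $f\in L^2(\mathbb{R},\mathbb{C}^N,d\mu+M\delta_{\lambda_0})$ satisfies $M_{id}f=\lambda_0 f$. This means $(x-\lambda_0)f(x)=0$ as an element of $L^2$, i.e.
$$
\int_{\mathbb{R}}|x-\lambda_0|^2\,\langle (d\mu+M\delta_{\lambda_0})(x)\, f(x),f(x)\rangle_{\mathbb{C}^N}=0.
$$
The contribution of the atom at $\lambda_0$ vanishes automatically because of the factor $(x-\lambda_0)^2$, so this forces $f(x)=0$ for $\mu$-almost every $x\in\mathbb{R}\setminus\{\lambda_0\}$ (use that the scalar measure controlling the $\mathbb{C}^N$-valued integral is the trace of $d\mu$, and $\mu(\{\lambda_0\})=0$). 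Hence any such $f$ is $L^2$-equivalent to $v\,\chi_{\{\lambda_0\}}$ for some vector $v\in\mathbb{C}^N$, and conversely every such function is an eigenvector with eigenvalue $\lambda_0$. This produces a surjective linear map $\Phi:\mathbb{C}^N\to\ker(M_{id}-\lambda_0)$ given by $v\mapsto v\chi_{\{\lambda_0\}}$.

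Next I would compute $\ker\Phi$. Two vectors $v,v'$ define the same element of $L^2(\mathbb{R},\mathbb{C}^N,d\mu+M\delta_{\lambda_0})$ iff
$$
\|(v-v')\chi_{\{\lambda_0\}}\|^2=\langle M(v-v'),v-v'\rangle_{\mathbb{C}^N}=0.
$$
Since $M$ is non-negative and symmetric, it admits a factorization $M=A^*A$, so $\langle Mw,w\rangle=\|Aw\|^2=0$ iff $Aw=0$ iff $Mw=0$. Therefore $\ker\Phi=\ker M$, and $\Phi$ descends to a linear isomorphism $\mathbb{C}^N/\ker M\simeq \ker(M_{id}-\lambda_0)$. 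Taking dimensions,
$$
\dim\ker(M_{id}-\lambda_0)=N-\dim(\ker M)=\mathrm{Rank}\,M,
$$
which gives both equivalences in the statement, and in particular $\lambda_0$ is an eigenvalue (of some positive multiplicity $p$) precisely when $\mathrm{Rank}\,M=p\geq 1$.

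The only delicate point is the first step, namely justifying that $(x-\lambda_0)f(x)=0$ in the $L^2$ sense really forces $f$ to be concentrated at $\{\lambda_0\}$ when the measure has both an absolutely-continuous-type part $\mu$ and an atomic matrix-weighted part $M\delta_{\lambda_0}$; this uses the hypothesis $\mu(\{\lambda_0\})=0$ to guarantee that the $\mu$-part of $f$ is genuinely forced to vanish and does not interact with the atom. Everything else is linear algebra.
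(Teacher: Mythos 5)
Your proposal is correct and follows essentially the same route as the paper: identify every $\lambda_0$-eigenfunction with some $v\,\chi_{\{\lambda_0\}}$, note that $\|v\chi_{\{\lambda_0\}}\|^2=\langle v,Mv\rangle$ so that two vectors give the same $L^2$ element exactly when they differ by an element of $\ker M$, and conclude that the eigenspace has dimension $\operatorname{rank}M$. The only difference is cosmetic — you realize the eigenspace as $\mathbb{C}^N/\ker M$ while the paper exhibits an explicit bijection $\chi_{\{\lambda_0\}}\vec u\mapsto M\vec u$ onto $\operatorname{Ran}M$ — and your extra care in justifying that $(x-\lambda_0)f=0$ in $L^2$ forces the $\mu$-part of $f$ to have zero norm (using $\mu(\{\lambda_0\})=0$) is a welcome tightening of a step the paper states without proof.
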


\begin{proof}
$\lambda_0$ is an eigenvalue of $M_{id}$ if and only if there exists a non zero vector $\varphi \in  L^2(\mathbb{R},\mathbb{C}^N, d\mu+M\delta_{\lambda_0})$
such that $(M_{id}-\lambda_0)\varphi (x)=0$ almost everywhere. This implies the existence of  $\vec{u}\in \mathbb{C}^N$
such that the eigenfunction has the form  $ \varphi(x)=\chi_{\{ \lambda_0 \} }(x) \vec{u}$, where $\chi_{\{ \lambda_0 \} }(x)=1$ if $x=\lambda_0$ and $0$ otherwise.
Thus $\lambda_0$ is an eigenvalue of $M_{id}$ if and only if $\|\varphi\|_{L_2}^2=\langle  \vec{u}, M\vec{u} \rangle \neq 0 $ and $\varphi = \chi_{\{ \lambda_0 \} }(x) \vec{u}$. Now,
for the difference of two eigenvectors we have
$\|  \chi_{\{ \lambda_0 \} }(x) \vec{u}-\chi_{\{ \lambda_0 \} }(x) \vec{v} \|_{L_2}^2 =\langle  \vec{w}, M\vec{w} \rangle$
with $\vec{w}=\vec{u}-\vec{v}$. Since $M$ is non-negative,$\langle  \vec{w}, M\vec{w} \rangle =0$ if and only if $Mw=0$. Therefore the eigenvector $\varphi = \chi_{\{ \lambda_0 \} }(x) \vec{u}$
is different from the eigenvector $\chi_{\{ \lambda_0 \} }(x) \vec{v}$ in the space $L^2(\mathbb{R},\mathbb{C}^N, d\mu+M\delta_{\lambda_0})$ if and only if $M\vec {u}\neq M \vec {v}$. Now
let us construct an isomorphism between the subspace of eigenvectors and the Range of $M$ in the following way:
$$I:Ker (M_{id}-\lambda_0)\rightarrow \{ \vec{v}: M\vec{u}=\vec{v}\}$$$$I( \chi_{\{ \lambda_0 \} }(x) \vec{u})=M\vec{u}$$
 Since $I$ is biyective and linear the two spaces have the same dimension and the lemma is proven.

\end{proof}
\begin{remark}
If rank $M>1$, then the matrix distribution $d\mu+M\delta_{\lambda_0} $ does not correspond to a Sturm-Liouville operator with limit point case conditions, since the singular spectrum of such operators is simple see \cite {simon3}.
\end{remark}

\subsection{Reduction process}

Let $B \subset {\mathbb{R}}$ be a fixed  Borel set and let
$P_{B}:= E_{H}(B)$ where $E_{H}$ is the spectral family associated to $H$.
Let us decompose the matrix function $\rho$ as follows:
$$
\rho(\Delta) = \rho_{B} (\Delta) + \rho_{B^c} (\Delta)
$$
with $\Delta \subset {\mathbb{R}}$ a Borel set where
$$
\rho_{B} (\Delta):= \rho(\Delta\cap B)\, , \quad
\rho_{B^c} (\Delta):= \rho(\Delta\cap B^c)\,.
$$
The following  result will be useful to prove almost exponential decay of resonant states.

\begin{theorem}\label{reduce1}
Let   $\psi$ be a vector in $ \mathcal{H}$ and $H$ be a self-adjoint operator with finite spectral multiplicity  $N$. Then
\begin{multline}
\langle P_{B}\psi, (P_{B}HP_{B}-z)^{-1} P_{B}\psi \rangle  =  \int_{\mathbb{R}} \frac{\langle (U_H \psi)(x), d\rho_B(x) (U_H \psi)(x) \rangle _{\mathbb{C}^N}}{x - z} \nonumber \\
   =  \int_{\mathbb{R}} \frac{\langle (U_H P_{B}\psi)(x) , d\rho(x)(U_H P_{B}\psi)(x) \rangle _{\mathbb{C}^N} \,}{x- z} .
\end{multline}
\end{theorem}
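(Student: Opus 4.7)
The plan is to transport everything to the spectral representation $U_H \colon \mathcal{H} \to L^2(\mathbb{R}, \mathbb{C}^N, \rho)$ in which $H$ corresponds to multiplication $M_{id}$ by the identity. The central observation is that, through this unitary equivalence, the spectral projection $P_B = E_H(B)$ becomes multiplication by the scalar characteristic function $\chi_B$; equivalently, $U_H P_B \psi = \chi_B \cdot (U_H \psi)$. This is just the functional calculus applied to $M_{id}$.

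First I would identify the image of $P_B H P_B$ under $U_H$. Since $P_B$ commutes with $H$, one has $P_B H P_B = H P_B$, which under $U_H$ becomes multiplication by $x\chi_B(x)$ (using $\chi_B(x)^2 = \chi_B(x)$). For $z \in \mathbb{C}\setminus \mathbb{R}$ the function $x\chi_B(x) - z$ is bounded away from $0$, hence $P_B H P_B - z$ is invertible and its inverse corresponds under $U_H$ to multiplication by $(x\chi_B(x) - z)^{-1}$.

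Next I would evaluate the left-hand side in the spectral picture. Writing $f := U_H \psi \in L^2(\mathbb{R}, \mathbb{C}^N, \rho)$, it reads
\begin{equation*}
\int_{\mathbb{R}} \frac{\langle \chi_B(x) f(x),\, \chi_B(x) f(x)\rangle_{\mathbb{C}^N}}{x\chi_B(x) - z}\, d\rho(x).
\end{equation*}
The factors $\chi_B$ in the numerator force the integrand to vanish off $B$; on $B$ the characteristic function equals one, so the denominator simplifies to $x-z$. Recognising that integration over $B$ against $\rho$ is the same as integration over $\mathbb{R}$ against $\rho_B$ gives the first equality. The second equality is then immediate from $\chi_B f = U_H P_B \psi$: one leaves the $\chi_B$ attached to the integrand and integrates against the full matrix measure $\rho$.

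The only point requiring care is the interpretation of $P_B H P_B$ as a self-adjoint operator and the identification of its resolvent under $U_H$; both follow from the fact that $P_B$ commutes with $H$ combined with the standard matrix-measure functional calculus. I do not expect any serious obstacle beyond this bookkeeping, since the result is essentially a translation of the resolvent into the spectral picture.
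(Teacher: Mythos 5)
Your argument is correct. It takes the same basic road as the paper --- everything rests on passing to the spectral representation $U_H$ and on the identity $P_BHP_B=HP_B=f(H)$ with $f(x)=x\chi_B(x)$ --- but the execution differs in a useful way. The paper first proves two auxiliary lemmas: that $P_BE_{P_BHP_B}(\Delta)P_B=E_H(\Delta\cap B)$ (via the functional-calculus rule $E_{f(H)}(\Delta)=E_H(f^{-1}(\Delta))$ and the identification $f^{-1}(\Delta)\cap B=\Delta\cap B$), and that $\langle E_H(\Delta)\psi,\psi\rangle=\int_\Delta\langle (U_H\psi)(x),d\rho(x)(U_H\psi)(x)\rangle$; it then writes the left-hand side as the Cauchy--Stieltjes transform of the spectral measure of the compressed operator $P_BHP_B$ in the state $P_B\psi$ and identifies that measure with $\mu_\psi(\Delta)=\langle E_H(\Delta\cap B)\psi,\psi\rangle$. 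You instead conjugate the resolvent itself: since $U_H(P_BHP_B)U_H^{-1}$ is multiplication by $x\chi_B(x)$ and $|x\chi_B(x)-z|\ge|\Im z|>0$, the resolvent becomes multiplication by $(x\chi_B(x)-z)^{-1}$, and both claimed equalities drop out of the single computation $U_HP_B\psi=\chi_B\,U_H\psi$ together with $\rho_B(\Delta)=\rho(\Delta\cap B)$. This is more direct and dispenses with the intermediate spectral-family lemmas, at the price of invoking (correctly) the identification of the resolvent of a multiplication operator in the matrix-measure space $L^2(\mathbb{R},\mathbb{C}^N,\rho)$, which is the same functional-calculus content the paper's lemmas encapsulate. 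Both proofs are complete; yours is shorter, the paper's isolates statements (notably its Lemma on $P_BE_{P_BHP_B}(\Delta)P_B$) that it reuses elsewhere.
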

\begin{remark}
In case de matrix $d\rho(x)$ has only one entry
$$\langle (U_H \psi)(x) , d\rho(x)(U_H \psi)(x) \rangle  =|(U_H \psi)(x)|^2 d\rho(x).$$

\end{remark}
Before we prove the above theorem we need the following lemmas.

\begin{lemma}\label{lem10}
For any Borel set $\Delta \subset {\mathbb{R}} $ the following identity holds,
\begin{equation}
P_{B} E_{ P_{B}H P_{B}}(\Delta) P_{B} = E_{H}(\Delta \cap B) = P_{B}E_{H}(\Delta) P_{B}\, .
\end{equation}
\end{lemma}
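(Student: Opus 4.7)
The plan is to exploit the fact that $P_B=E_H(B)$ is a spectral projection of $H$, so it commutes with $H$ and with every other spectral projection $E_H(\Delta)$. The lemma is then a two-step identity: the right-hand equality is a direct consequence of multiplicativity of the spectral measure, while the left-hand equality uses that $P_B$ reduces $H$ so that $P_BHP_B$ decomposes as a direct sum.

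For the second equality $E_H(\Delta\cap B)=P_B E_H(\Delta)P_B$, I would simply invoke the functional calculus identity $E_H(A)E_H(A')=E_H(A\cap A')$, together with idempotence $P_B^2=P_B$, to write
\begin{equation*}
P_B E_H(\Delta) P_B \;=\; E_H(B)\,E_H(\Delta)\,E_H(B) \;=\; E_H(\Delta\cap B)\,E_H(B)\;=\;E_H(\Delta\cap B).
\end{equation*}

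For the first equality, I would note that since $P_B$ commutes with $H$, the closed subspace $\mathrm{Ran}(P_B)$ reduces $H$. Denote the restriction by $H_B:=H|_{\mathrm{Ran}(P_B)}$, which is self-adjoint on $\mathrm{Ran}(P_B)$. Then $P_BHP_B$, viewed as a self-adjoint operator on $\mathcal H=\mathrm{Ran}(P_B)\oplus\mathrm{Ran}(P_B^\perp)$, splits as $H_B\oplus 0$. By uniqueness of the spectral resolution of a direct sum,
\begin{equation*}
E_{P_BHP_B}(\Delta)\;=\;E_{H_B}(\Delta)\oplus \chi_\Delta(0)\,P_B^\perp.
\end{equation*}
Sandwiching by $P_B$ annihilates the $P_B^\perp$ summand and leaves $E_{H_B}(\Delta)$ (extended by $0$). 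Finally, using once more that $P_B$ reduces $H$, I identify $E_{H_B}(\Delta)=E_H(\Delta)P_B=E_H(\Delta\cap B)$, closing the chain.

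The only substantive point requiring care is the direct-sum decomposition of $E_{P_BHP_B}(\Delta)$ when $H$ is unbounded: one must verify that $P_BHP_B$ with its natural domain is indeed self-adjoint, which follows because $P_B$ commutes strongly with $H$. Everything else is formal manipulation of the spectral calculus, so I do not expect any real obstacle.
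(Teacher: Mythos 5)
Your argument is correct, but it proceeds differently from the paper. You decompose $P_BHP_B=H_B\oplus 0$ with respect to $\mathcal H=\mathrm{Ran}(P_B)\oplus\mathrm{Ran}(P_B)^{\perp}$, use that the spectral measure of a direct sum is the direct sum of the spectral measures (so $E_{P_BHP_B}(\Delta)=E_{H_B}(\Delta)\oplus\chi_{\Delta}(0)\,P_B^{\perp}$, the sandwich by $P_B$ killing the second summand), and then identify $E_{H_B}(\Delta)$, extended by zero, with $E_H(\Delta)P_B=E_H(\Delta\cap B)$; the right-hand equality is multiplicativity of $E_H$, exactly as in the paper (where it is left implicit). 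The paper instead observes that $P_BHP_B=HP_B=f(H)$ for $f(x)=x\chi_B(x)$ and invokes the composition rule $E_{f(H)}(\Delta)=E_H\bigl(f^{-1}(\Delta)\bigr)$ (Birman--Solomjak), after which the elementary set identity $f^{-1}(\Delta)\cap B=\Delta\cap B$ finishes the computation; there the spurious contribution of the point $0$ coming from $B^{c}$ is removed by intersecting with $B$ rather than by the projection sandwich. The paper's route is a shorter functional-calculus computation resting on a quotable spectral-mapping fact, while yours makes the block structure explicit and shows transparently why the $\chi_{\Delta}(0)P_B^{\perp}$ term appears and disappears; in exchange you must justify, as you note, that $P_BHP_B$ with its natural domain $\bigl(\mathrm{Dom}(H)\cap\mathrm{Ran}(P_B)\bigr)\oplus\mathrm{Ran}(P_B)^{\perp}$ is self-adjoint (it is, being a direct sum of the self-adjoint $H_B$ and $0$) and appeal to uniqueness of the spectral resolution for direct sums, both standard. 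Either way the lemma is established.
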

\begin{proof}
We first note that
$
 P_{B} H  P_{B}= H P_{B}=f(H)$ where $f(x) = x \chi_{B}(x)$ , with  $\chi_{B}$ the characteristic  function on the Borel set $B$.
 Also we know that  $ E_{f(H)} (\Delta) = E_H (f^{-1}(\Delta) )$, see \cite{birman1} Theorem~4, Chapter~6 p.158. Thus,
  \begin{eqnarray*}
 P_{B} E_{P_{B} H  P_{B}}(\Delta) P_{B} & = &  E_{P_{B}  HP_{B}}(\Delta)P_{B} =  E_{HP_{B}} (\Delta)  P_{B} \\
 & = &  E_{H P_{B}}(\Delta) E_H(B)
   = E_H(f^{-1}(\Delta))  E_H(B)\\
   & = & E_H(f^{-1}(\Delta)\cap B)\, .
\end{eqnarray*}
We claim that for $f(x) = x \chi_{B}(x)$ one has that  $f^{-1}(\Delta) \cap B = \Delta \cap B$.
Clearly, if $x \in \Delta \cap B$ then $f(x)=x\chi_{B}(x) = x \in \Delta$, so $x\in f^{-1}(\Delta)$.

Also, $x\in f^{-1}(\Delta) \cap  B$,   implies that $f(x) \in \Delta $. But for $x \in B$,  $g(x) = 1$ so
$f(x)=x $ and $x \in \Delta$, ending the proof.
\end{proof}

\begin{lemma}\label{lem11}
For any vector $\psi \in \mathcal{H}$
\begin{equation}
\langle  E_{H}(\Delta) \psi, \psi    \rangle
=\int_{\Delta}    \langle (U_H \psi)(x), d\rho(x) (U_H \psi)(x) \rangle \ .
\end{equation}
\end{lemma}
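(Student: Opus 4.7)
The plan is to derive this formula as a direct consequence of the spectral representation recalled at the beginning of Section 4. Writing $H = U_H^{-1} M_{id} U_H$, the functional calculus transforms cleanly under the unitary equivalence: for any bounded Borel function $f$ we have $f(H) = U_H^{-1} M_{f \circ id} U_H$ where $M_{f \circ id}$ denotes multiplication by $f(x)$ on $L^2(\mathbb{R},\mathbb{C}^N,\rho)$. Applying this with $f = \chi_\Delta$ gives $E_H(\Delta) = U_H^{-1} M_{\chi_\Delta} U_H$.

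First I would therefore rewrite the left-hand side as
\begin{equation*}
\langle E_H(\Delta)\psi,\psi\rangle_{\mathcal H} = \langle M_{\chi_\Delta} U_H\psi, U_H\psi\rangle_{L^2(\mathbb R,\mathbb C^N,\rho)},
\end{equation*}
using the unitarity of $U_H$. Next, I would unfold the inner product on $L^2(\mathbb{R},\mathbb{C}^N,\rho)$ associated with the matrix measure distribution $\rho$: by definition, for vector-valued functions $f,g$, one has $\langle f,g\rangle_{L^2(\mathbb{R},\mathbb{C}^N,\rho)} = \int_\mathbb{R} \langle f(x), d\rho(x)\, g(x)\rangle_{\mathbb{C}^N}$. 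Applying this with $f = \chi_\Delta\, U_H\psi$ and $g = U_H\psi$ and using that $\chi_\Delta(x)^2 = \chi_\Delta(x)$ collapses the integral to
\begin{equation*}
\int_\Delta \langle (U_H\psi)(x), d\rho(x)(U_H\psi)(x)\rangle_{\mathbb{C}^N},
\end{equation*}
which is exactly the right-hand side.

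The only delicate point is the very first step: justifying that multiplication by $\chi_\Delta$ on $L^2(\mathbb{R},\mathbb{C}^N,\rho)$ realizes the spectral projection $E_{M_{id}}(\Delta)$. This is a standard fact once the measure-theoretic functional calculus is identified with the Borel functional calculus built from the spectral theorem, and can be quoted from the references cited in Section 4 (Akhiezer--Glazman, Weidmann, Naimark). No estimates or limiting arguments are required beyond the standard extension from simple functions to general bounded Borel functions. I do not anticipate a real obstacle here; the proof is essentially a translation between the abstract spectral projection and its concrete action in the canonical spectral model of $H$.
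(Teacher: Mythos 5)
Your proposal is correct and follows essentially the same route as the paper: both identify $E_H(\Delta)=\chi_\Delta(H)$ with $U_H^{-1}M_{\chi_\Delta}U_H$ via the spectral representation, transfer the inner product to $L^2(\mathbb{R},\mathbb{C}^N,\rho)$ by unitarity, and then unfold the matrix-measure inner product to obtain the integral over $\Delta$. No gaps; the step you flag as delicate is exactly the standard fact the paper also invokes from the spectral representation theorem.
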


\begin{proof}

From the spectral representation theorem we know that
$ \chi_{\Delta} (H) = U_H^{-1} M_{\chi_{\Delta}}U_H $
where
$(M_{\chi_{\Delta}}h)(x) = \chi_{\Delta}(x) h(x)$. Then
\begin{eqnarray*}
\langle E_H(\Delta)\psi, \psi    \rangle_{\mathcal{H}}
 & = &
\langle \chi_{\Delta}(H) \psi, \psi    \rangle_{\mathcal{H}}
=
\langle  U_H \, \chi_{\Delta}(H) \psi, U_H \psi  \rangle_{L_2(\rho)} \\
  & = &
  \langle M_{\chi_{\Delta}} U_H \psi, U_H \psi  \rangle_{L_2(\rho)}\\
  & = &\int_{\mathbb{R}} \chi_{\Delta}(x)  \langle (U_H \psi)(x), d\rho(x) (U_H \psi)(x) \rangle  \\
  & = & \int_{\Delta}  \langle (U_H \psi)(x), d\rho(x) (U_H \psi)(x) \rangle \ .
\end{eqnarray*}
\end{proof}

\begin{proof}[Proof of Theorem \ref{reduce1}] Let us define the measure
$\mu_g(\Delta) = \langle  E_{H}(\Delta \cap B)g,g \rangle $. According to Lemma~\ref{lem11} and the definition of $\rho_{B}(\Delta)$ we deduce the following identity:
\begin{multline*}
\mu_\psi (\Delta) = \int_{\Delta \cap B} \langle (U_H \psi)(x), d\rho(x) (U_H \psi)(x) \rangle
= \int_{\Delta }  \langle (U_H \psi)(x),  d\rho_{B} (U_H \psi)(x) \rangle  \,(x)  .
\end{multline*}
On the other hand, using Lemma~\ref{lem10} we get that
\begin{multline}\label{mu11}
\mu_\psi (\Delta) = \langle  P_{B} E_{ P_{B}H P_{B}}(\Delta)  P_{B}\psi,\psi   \rangle
 =
\int_{\Delta}  \langle (U_H \psi)(x), d \rho_{B}(x) (U_H \psi)(x) \rangle\
\end{multline}
By the spectral theorem,
\begin{eqnarray*}
\langle P_{B} \psi, ( P_{B}HP_{B}-z)^{-1}
P_{B}\psi  \rangle
& = & \int_{\mathbb{R}}  \frac{1}{x-z} \, d\langle   E_{ P_{B} H P_{B}} P_{B} \psi , P_{B} \psi  \rangle \\
 & = & \int_{\mathbb{R}}  \frac{1}{x-z} \, d\langle  P_{B}  E_{ P_{B} H P_{B}} P_{B} \psi , \psi  \rangle \, .
\end{eqnarray*}
So, by identity (\ref{mu11}) one has that
\begin{multline*}
\langle P_{B} \psi, ( P_{B}HP_{B}-z)^{-1}
P_{B}\psi  \rangle
 =
\int_{\mathbb{R}} \frac{1}{x-z}\, d\mu_{\psi}(x) \\
 =
\int_{\mathbb{R}} \frac{1}{x-z}\,  \langle (U_H \psi)(x), d \rho_{B}(x) (U_H \psi)(x) \rangle \, .
\end{multline*}
To prove the second equality in the theorem we apply again Lemma~\ref{lem10} to obtain that
\begin{eqnarray*}
\langle P_{B} \psi , ( P_{B} H P_{B}-z)^{-1}
P_{B}\psi  \rangle
 & = &
\int_{\mathbb{R}} \frac{1}{x-z}\, d\langle P_{B}E_{P_{B}HP_{B}  }P_{B}\psi, \psi \rangle  \\
   & = & \int_{\mathbb{R}} \frac{1}{x-z}\, d \langle E_H(x)P_{B}\psi, P_{B}\psi \rangle \, .
\end{eqnarray*}
We finish the proof by using Lemma~\ref{lem11}.
\end{proof}

Let $H_0$ be a self-adjoint operator defined on $\mathcal{H}$ with spectrum of finite multiplicity $N$.
Define
$$
H_{\kappa} = H_0 + \kappa |\psi\rangle \langle \psi|
$$
where $\psi$ a normalized vector on $\mathcal{H}$ and $\kappa \in \mathbb{R}$.
Let $U_{H_0}$ be  a unitary operator and $\mu$ a matrix measure distribution  defined on $\mathbb{R}$ such that
$U_{H_0}H_0 U_{H_0} ^{-1}= M_{id}$ on $L^2(\mathbb{R},\mathbb{C}^N, d\mu)$.

Now we can present a useful result about almost exponential decay. It will be applied in next section to Sturm-Liouville operators.

\begin{theorem}\label{abstract}
\begin{enumerate}
\item[a)]Assume that $H_0$ has a simple eigenvalue $\lambda_0$ embedded in some continuous spectrum. Precisely,
 \begin{itemize}
  \item[a1)] $  H_0 \varphi = \lambda_0 \varphi $  , $\| \varphi \|=1$.
  \item[a2)] There exists an open interval $I$ of $\mathbb{R}$ with $d\mu = \gamma(\lambda) d\lambda + M\delta_{\lambda_0}$ , $\lambda_0 \in I$ where for each $\lambda \in I$,
   $\gamma(\lambda)$ is a non-negative  matrix and $M$ is a constant non-negative matrix of rank one.   \end{itemize}
\item[b)] The map $\lambda \to \langle \,  (U_{H_0}\psi)(\lambda) , \gamma(\lambda ) (U_{H_0}\psi)(\lambda)  \, \rangle $ is $C^{1,\alpha}(I)$, with
 $\, 0 < \alpha \leq 1$.
\item[c)] For $\varphi$ and $\psi$ we assume that
 \begin{multline*}
\qquad \langle  \varphi, \psi\rangle_{{\mathcal{H}}}  =  \langle U_{H_0} \varphi, U_{H_0}\psi\rangle_{L^2 } \neq 0
   \text{ and }  \langle (U_{H_0} \psi)(\lambda_0), \gamma(\lambda_0)  (U_{H_0}\psi) (\lambda_0 )\rangle_{\mathbb{C}^N} \neq 0.
 \end{multline*}
\end{enumerate}

\noindent Then there exists an open interval $J$, $\lambda_0 \in J, \bar{J}\subset I$ such that
\begin{enumerate}
\item For all $\kappa \neq 0$ , $\sigma_{p}(H_{\kappa})\cap J = \emptyset$, $ J \subset\sigma_{ac}$.
\item Given $g$ a smooth characteristic function supported on $J$ which is identically $1$ in a neighborhood of $\lambda_0$, we have that for all $t \in \mathbb{R}$ and all $\kappa \neq 0$ but small enough
\begin{equation*}
\langle \varphi , e^{-iH_{\kappa}t}g(H_{\kappa}) \varphi \rangle = c_{\kappa} e^{-i\zeta_{\kappa}|t|} + R(t,\kappa)
\end{equation*}
with $\zeta_{\kappa}$ and $c_{\kappa}$ described in Theorem \ref{protothm}.
\end{enumerate}
In particular, $\Im \zeta_{\kappa} <0$, $c_{\kappa} = 1+O(\kappa^2 )$ and $| R(t,\kappa)| \leq C\kappa^{2}$. Moreover, $|t| |R(t,\kappa)|\le C\kappa^{2\alpha}$, if $\alpha \in (0,1)$ and
 $|t| |R(t,\kappa)|\le C\kappa^{2}|\log |\kappa ||$, if $\alpha =1$.
\end{theorem}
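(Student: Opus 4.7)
The plan is to reduce Theorem~\ref{abstract} to the already-established Theorems~\ref{pp} and~\ref{thmrankone} by translating the spectral-representation hypotheses into analytic hypotheses on the reduced resolvent $\mathcal F_0(\lambda+i0)$. The dictionary between the two settings is provided by Theorem~\ref{reduce1}.

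First I would fix the orthogonal projection $P:=|\varphi\rangle\langle\varphi|$. Because $M$ is a rank-one non-negative matrix and $d\mu = \gamma(\lambda)\,d\lambda + M\delta_{\lambda_0}$ on $I$, the eigenspace at $\lambda_0$ is one-dimensional, so $P = E_{H_0}(\{\lambda_0\})$ and $P^\perp = E_{H_0}(B)$ with $B=\RR\setminus\{\lambda_0\}$. Applying Theorem~\ref{reduce1} with $H=H_0$ gives
\begin{equation*}
\mathcal F_0(z) = \langle\psi, P^\perp(H_0^\perp-z)^{-1}P^\perp\psi\rangle = \int_{\RR} \frac{\langle (U_{H_0}\psi)(x),\, d\rho_B(x)\,(U_{H_0}\psi)(x)\rangle_{\mathbb{C}^N}}{x-z}\, .
\end{equation*}
On $I$ the measure $\rho_B$ restricts to $\gamma(\lambda)\,d\lambda$ (the atom at $\lambda_0$ having been removed), while the contribution of $\rho_B$ from $\RR\setminus I$ is, for $z$ in a complex neighborhood of $\lambda_0$, an analytic function of $z$. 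Setting $f(\lambda):=\langle (U_{H_0}\psi)(\lambda),\gamma(\lambda)(U_{H_0}\psi)(\lambda)\rangle_{\mathbb{C}^N}$ I can then decompose $\mathcal F_0(z)$ into the Borel transform of $f\,\chi_I$ plus an analytic remainder near $\lambda_0$.

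Next I would invoke the results of Section~\ref{Boundary limit of Borel transforms} on boundary limits of Borel transforms. Hypothesis (b) asserts $f\in C^{1,\alpha}(I)$, so the Borel transform of $f\,\chi_I$ admits upper half-plane boundary values in $C^{1,\beta}$ on any closed subinterval of $I$, for some $\beta>0$. Combined with the analytic remainder this gives $\mathcal F_0(\cdot +i0)\in C^{1,\beta}$ on a closed interval $J\subset I$ with $\lambda_0$ in its interior. By the Stieltjes--Plemelj formula,
\begin{equation*}
\Im \mathcal F_0(\lambda_0+i0) = \pi\, f(\lambda_0) = \pi\,\langle(U_{H_0}\psi)(\lambda_0),\gamma(\lambda_0)(U_{H_0}\psi)(\lambda_0)\rangle_{\mathbb{C}^N} >0
\end{equation*}
by hypothesis (c), and by continuity I may shrink $J$ so that $\Im \mathcal F_0(\lambda+i0)>0$ throughout $J$. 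Keeping $J\subset I$ ensures that the only eigenvalue of $H_0$ in $J$ is the simple eigenvalue $\lambda_0$, since the singular part of $\mu$ on $I$ consists of the single atom at $\lambda_0$.

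With all analytic hypotheses verified, the conclusions follow at once: hypothesis (c) gives $\langle\varphi,\psi\rangle\neq 0$, so Theorem~\ref{pp} yields conclusion (1), and Theorem~\ref{thmrankone} applied with $\lambda_\kappa = \lambda_0+\kappa|\langle\varphi,\psi\rangle|^2$ and $F(\lambda,\kappa)=|\langle\varphi,\psi\rangle|^2\,\mathcal F_\kappa(\lambda+i0)$ yields conclusion (2), together with the announced bounds on $\zeta_\kappa$, $c_\kappa$ and $R(t,\kappa)$. The main obstacle is the boundary-regularity step: one must argue that $C^{1,\alpha}$ regularity of the scalar density $f$ transfers to $C^{1,\beta}$ regularity of the boundary values of the Cauchy-type integral, which is the Plemelj--Privalov-flavoured content of Section~\ref{Boundary limit of Borel transforms}; a secondary technicality is checking that the part of the spectral measure lying outside $I$ contributes only a smooth perturbation near $\lambda_0$, which is immediate from analyticity of $1/(x-z)$ in $z$ for $x$ bounded away from $\Re z$.
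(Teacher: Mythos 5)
Your proposal is correct and follows essentially the same route as the paper: apply Theorem \ref{reduce1} with $B=\mathbb{R}\setminus\{\lambda_0\}$ to write $\mathcal F_0$ as the Borel transform of the density $\langle (U_{H_0}\psi)(\lambda),\gamma(\lambda)(U_{H_0}\psi)(\lambda)\rangle$, use Theorem \ref{maintheorem2} for the $C^{1,\beta}$ boundary values and hypothesis (c) for the Fermi Golden Rule, then conclude via Theorems \ref{pp} and \ref{thmrankone}. Your write-up is in fact a bit more explicit than the paper's on two points it glosses over: the splitting into the Borel transform of $f\chi_I$ plus an analytic remainder, and the factor $\pi$ in $\Im\mathcal F_0(\lambda_0+i0)=\pi f(\lambda_0)$, neither of which affects the argument.
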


\begin{remark}\label{vector}
There is  a normalized vector $\vec{u}$ such that
   $\text{Ran}(M)=\mathbb{C}\vec{u}$ and
   $$
   U_{H_0}\varphi = \frac{1}{\sqrt{\langle \vec{u},  M \vec{u} \rangle_{{\mathbb{C}}^N}  }} \, \chi_{\{ \lambda_0 \}}\vec{u}\, .
   $$

\end{remark}
Now we can use the tools we have developed to prove the theorem.
\begin{proof}
Applying Theorem~\ref{reduce1} with $B=\mathbb{R}\setminus\{\lambda_0\}$ and $\rho = \mu$ we obtain
$$
\langle \psi, P^{\perp} (H_0^{\perp}-z)^{-1 } P^{\perp} \psi  \rangle
= \int \frac{\langle (U_{H_0} \psi)(\lambda) ,\gamma(\lambda) (U_{H_0} \psi)(\lambda)\rangle_{\mathbb{C}^N}d\lambda }{\lambda - z}\,  .
$$
Then (1) follows applying Theorem~\ref{pp}, taking into account that
$$\mathcal \Im F_0(\lambda_0+i0)=\Im\langle \psi, P^{\perp} (H_0^{\perp}-\lambda_0-i0)^{-1 } P^{\perp} \psi  \rangle =\\
 \langle (U_{H_0} \psi)(\lambda_0) ,\gamma(\lambda_0) (U_{H_0} \psi)(\lambda_0)\rangle_{\mathbb{C}^N}$$ and (2) follows from Theorem~\ref{thmrankone} together with Theorem~\ref{maintheorem2}.
\end{proof}


\section{A Sturm Liouville model with an embedded eigenvalue}

First let us construct an operator  acting on functions defined on the half axis $\mathbb R^+$ which has an embedded eigenvalue.
The  function $$ h(x):= \cos x+k\sin x$$
 is solution of the initial value problem
 \begin{eqnarray*}
-u''&=&u\\ u(0)&=&1, \quad u'(0)=k \,.
\end{eqnarray*}
  We assume $k>0$ fixed, (one can set for example $k=1$).
Let us define
\begin{eqnarray*} \label{q}
 q(x):= -2k \frac{d}{dx}\Big[\frac{h^2(x)}{1+k\int_0^x h ^2(t)dt}\Big]\, .
 \end{eqnarray*}
The operator $L_N$  ($N$ stands for the Neumann boundary condition) acting on a dense subspace of $L_2(\mathbb R^+)$ generated by
$$
(lu)(x)=-u''+ q(x)u, \quad u'(0)=0 , \quad 0\leq x<\infty
$$
has the following spectral function see \cite{levitan} p. 46:
\begin{equation}
\rho _N(\lambda )=\hat{\rho}(\lambda )+ks(\lambda -1)
\end{equation}
 where
$s(t)$ \small{$=\left\{\begin{array}{r@{\quad \mbox {if} \quad}l} 0 & t< 0\\1 & t\geq 0 \end{array} \right. $}
and $\hat{\rho}(\lambda )$ is the spectral function of the operator $L_k$ generated by
\begin{eqnarray*}
(l_0 u)(x)&=&-u'', \quad 0\leq x<\infty \\
u'(0)&=&ku(0)
\end{eqnarray*}
that is
  \begin{equation}
d \hat{\rho}(\lambda )=\left\{\begin{array}{r@{\quad \mbox {if} \quad}l} \frac{\sqrt{\lambda }}{\pi (\lambda +k^2)}d\lambda  & \lambda \geq  0\\0\qquad & \lambda <0 \end{array} \right.
\end{equation}
See \cite{naimarkV1} p. 141. Therefore the point  $1$ is an embedded eigenvalue for the operator $L_N$.

The operator $L_N$ defined as above will take the place of $H_0$ in Theorem~\ref{abstract}. This operator satisfies the hypothesis a1) and a2) of that theorem. Fix a vector $\psi \in L^2(\mathbb{R}^+)$, $\mathbb{R}^+ = \{ x \in \mathbb{R} : x\geq 0 \}$. Consider
the perturbed operator
$$
H_{\kappa} = L_N + \kappa |\psi\rangle \langle \psi|
$$
Let $\mathcal{O} \subset \mathbb{R}^+$ an open interval such that $\lambda_0 = 1 \in \mathcal{O}$. To verify hypothesis b) of Theorem~\ref{abstract} we shall find an interval $I \subset \mathbb{R}^+$ such that
the eigenvalue $\lambda_0 = 1 \in I$ and $|U_{L_N}\psi(\lambda)|^2 \hat{\rho}(\lambda) $ is in $C^{1,\alpha}(I)$ with $ 0< \alpha \leq 1$.

The unitary operator $U_{L_N}$ is given by the transform
$$
(U_{L_N}\psi)(\lambda) = \int_{{\mathbb{R}}^+}  \omega (t,\lambda) \psi(t) \, dt
$$
where $\omega(t,\lambda)$ is a solution of the eigenvalue problem
$$
-u'' + q(x) u = \lambda u  \, \quad u(0,\lambda) = 0 \, , \, u'(0,\lambda)=1
$$
for any $\lambda \in \mathbb{C}$.  The function $\omega(t,\lambda)$ is an entire function of $\lambda$.

If we choose $\psi$ to be of compact support then
$(U_{L_N}\psi)(\lambda) \in C^{\infty}(\mathcal{O}) $. Since $\hat{\rho} \in C^{\infty}(\mathcal{O})$ then
$|(U_{L_N}\psi)(\lambda)|^2 \hat{\rho}(\lambda)  \in C^{\infty}(\mathcal{O})$ and therefore there is an open interval $I$
containing $\lambda_0 = 1$ such that $|(U_{L_N}\psi)(\lambda)|^2 \hat{\rho}(\lambda) \in C^{1,\alpha}(I) $, so condition b)
 in Theorem~\ref{abstract} is satisfied.

To realize condition c) of Theorem~\ref{abstract} we can take for instance $\psi(t) = \chi_{\Delta}(t) \omega(t,1)$ where $\Delta $ is an open interval. So,
$$
|(U_{L_N}\psi)(1)|^2 = \int_{\Delta} |\omega(t,1)|^2 dt > 0\,
$$
and therefore$$ |(U_{L_N}\psi)(1)|^2 \hat{\rho}(1) >0  $$
 Since $\hat{\rho}(1) =\frac{1}{\pi(1+k^2)}>0$.
 Moreover
 \begin{eqnarray*}
 \langle  \varphi, \psi\rangle_{{\mathcal{H}}}  =  \langle U_{L_N} \varphi, U_{L_N}\psi\rangle_{L^2 }
 &=&
 \int_{\mathbb{R}}\chi_{\{1\}} (x)(U_{L_N}\psi)(x)d\rho _N(x)\\
 &=&(U_{L_N}\psi)(1)\rho _N(\{1\})
 =(U_{L_N}\psi)(1)k\ne0.
 \end{eqnarray*}


\section{Boundary limit of Borel transforms}\label{Boundary limit of Borel transforms}

Let $I \subset \mathbb{R}$ be an open interval and consider a measure
$\mu_{I} : {\mathcal{B}} \to {\mathbb{R}}^+$ defined on the Borel sets $\mathcal{B}$ of $\mathbb{R}$
such that $\mu_{I} $ restricted to $I$ is absolutely continuous with respect to Lebesgue measure, i.e. there exists a measurable function
$f: {\mathbb{R}} \to {\mathbb{R}}^+$ such that for any Borel set $\Delta \subset I  $, it holds that
$$
\int_{\Delta} f(x) dx = \mu(\Delta)\, .
$$

Assume moreover that $f \in C^{1,\alpha}(I)$, that is, $f \in C^1(I)$  with $f'$ $\alpha-$H\"older continuous on $I$, $0 < \alpha \leq 1$. We write $\mathfrak{F}(z)$ for the corresponding {\it  Borel transform} associated  to the measure $\mu_I$,
$$
\mathfrak{F}(z) = \int \frac{d\mu_{I}(x)}{x-z} \, , \quad \Im\,  z \neq 0 \, .
$$
The following results assures the existence and the smoothness of the  boundary values $\lim_{\epsilon \to 0^+} \mathfrak{F}(\lambda + i \epsilon)$,
See \cite{Yaf}, \cite{Bel} \cite{MR2778944},\cite{courant}.
\begin{theorem}\label{maintheorem2}
Let  $I \subset \mathbb{R}$ be an open interval and $\mu_{I}$ the measure
defined above. Then
$$
F(\lambda) := \lim_{\epsilon \to 0^+} \mathfrak{F}(\lambda + i \epsilon)
$$
 exists and $F\in C^1(I)$. Moreover for any interval $J $ such that $\bar{J}\subset I$, the function $F'$ is $\beta$-H\"older continuous in $J$ for all $\beta<\alpha$.,
 that is, $F \in C^{1,\beta}(J)$ .
\end{theorem}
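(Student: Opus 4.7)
The plan is to reduce to the Cauchy integral of a compactly supported $C^{1,\alpha}$ function and then invoke the classical Plemelj--Sokhotski--Privalov theorem; the rest is bookkeeping.

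First, split the measure into its absolutely continuous part on $I$ and a remainder, $\mu_I = f\chi_I\,dx + \nu$ with $\nu := \mu_I|_{\mathbb{R}\setminus I}$. Since $I$ is open, $\mathrm{dist}(\lambda,\mathrm{supp}\,\nu)>0$ for every $\lambda\in I$, so
\[
z\mapsto\int_{\mathbb{R}\setminus I}\frac{d\nu(x)}{x-z}
\]
is holomorphic in a complex neighborhood of any closed $J\subset I$ and contributes a $C^\infty$ term to $F$. Then localize the density: fix $\chi\in C_c^\infty(I)$ equal to $1$ on an open neighborhood of $J$, and write $f=\chi f+(1-\chi)f$. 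The Borel transform of $(1-\chi)f\,dx$ is likewise holomorphic in a neighborhood of $J$, reducing the problem to the Cauchy integral $\tilde C(z):=\int_{\mathbb{R}}\tilde f(x)/(x-z)\,dx$ of $\tilde f:=\chi f\in C^{1,\alpha}_c(\mathbb{R})$.

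Compact support makes integration by parts legitimate, giving for $z\in\mathbb{C}\setminus\mathbb{R}$
\[
\tilde C(z)=-\int_{\mathbb{R}}\tilde f'(x)\,\log(x-z)\,dx,
\]
with an appropriate holomorphic branch of $\log$ on each open half-plane. Differentiating in $z$ under the integral sign yields
\[
\tilde C'(z)=\int_{\mathbb{R}}\frac{\tilde f'(x)}{x-z}\,dx,
\]
which is itself the Cauchy integral of $\tilde f'\in C^{0,\alpha}_c(\mathbb{R})$. Now apply the Plemelj--Sokhotski--Privalov theorem twice: for $\tilde f\in C_c^{0,\alpha}$ one obtains existence of the boundary value $\tilde C(\lambda+i0)$ with uniform convergence on compacts; for $\tilde f'\in C_c^{0,\alpha}$ one obtains existence, uniform convergence on compacts, and $\beta$-H\"older continuity (for every $\beta<\alpha$) of $G(\lambda):=\tilde C'(\lambda+i0)$. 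Since each $\tilde C(\cdot+i\varepsilon)$ is smooth with derivative $\tilde C'(\cdot+i\varepsilon)$, and both converge uniformly on $J$ as $\varepsilon\downarrow 0$, the restriction $F|_J$ is $C^1$ with $F'=G$. Adding back the $C^\infty$ pieces isolated above yields $F\in C^1(I)$ and $F'\in C^{0,\beta}(J)$ for every $\beta<\alpha$.

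The main obstacle is the Plemelj--Privalov estimate itself, whose proof decomposes the principal-value kernel $(\tilde f'(x)-\tilde f'(\lambda))/(x-\lambda)$ near and far from $\lambda$, using the H\"older bound $|\tilde f'(x)-\tilde f'(\lambda)|\le L|x-\lambda|^\alpha$ to absorb the singularity. A logarithmic loss at the endpoint $\alpha=1$ is precisely what forces the strict inequality $\beta<\alpha$ rather than $\beta=\alpha$; this is the only non-cosmetic ingredient.
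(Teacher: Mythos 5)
Your argument is correct, and its skeleton (cut off the density to reduce to a compactly supported $C^{1,\alpha}$ function, discard the far-away part of the measure as a holomorphic contribution, then invoke Privalov-type H\"older estimates) is the same as the paper's. Where you genuinely diverge is in how the derivative of the boundary value is controlled. The paper splits the localized Cauchy integral into real and imaginary parts, identifies the limits explicitly as $\pi f\eta$ (Poisson kernel) and the Hilbert transform $\mathfrak{H}[f\eta]$, and then differentiates the limit functions, relying on the commutation identity $(\mathfrak{H}[f\eta])'=\mathfrak{H}[(f\eta)']$ (cited from Pandey) together with Privalov applied to $(f\eta)'$; no uniformity in $\epsilon$ is ever needed there, since the boundary functions are computed in closed form. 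You instead stay complex-analytic: integration by parts shows $\tilde C'(z)$ is itself the Cauchy integral of $\tilde f'\in C^{0,\alpha}_c$, and you then pass $C^1$ regularity to the limit via the classical ``uniform convergence of derivatives'' theorem, which requires the uniform-on-compacts version of Plemelj--Sokhotski--Privalov rather than just pointwise convergence. Your route treats real and imaginary parts at once and avoids the Hilbert-transform commutation lemma and the Poisson-kernel computation, making it arguably more self-contained; the paper's route yields the explicit formulas $\Im F=\pi f\eta$ and $\Re F=\mathfrak{H}[f\eta]$ on $J$, which is extra information (and is in fact how the Fermi Golden Rule quantity $\Im\mathcal F_0(\lambda+i0)$ is identified elsewhere in the paper), at the cost of outsourcing two analytic facts to the literature. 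Both arguments prove the stated conclusion, including the strict loss $\beta<\alpha$, which as you note is only forced at $\alpha=1$.
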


\begin{proof}
Let $ \mathcal{K}$ be an open subinterval such that
$\bar{J} \varsubsetneq \mathcal{K} \varsubsetneq \bar{\mathcal{K}} \varsubsetneq I$
and let us define  $\eta \in C_0^{\infty}(\mathbb{R}, [0,1]) $
with $\eta(x) =0$ if $x \notin \mathcal{K}$   and
$\eta(x) =1$ if $x \in \bar{J}$. For any  $z \in {\mathbb{C}}$ with
$\Im\,  z \neq 0 $,
$$
\mathfrak{F}(z) = \int_{ \mathcal{K} } \frac{f(x) \eta(x)}{x-z} \, dx
 +   \int_{ \mathbb{R}- \bar{J} }\frac{(1-\eta(x))}{x-z} \, d\mu(x)\, .
$$
If $\text{Re}\,  z \in J$ the second integral in the above
equality represents an holomorphic function, therefore this term is $C^1(I)$ and
$\alpha$-H\"older continuous in any subinterval $J$ with $\bar{J}\subset I$.
(Recall that continuously differentiable function on compact sets of the real line are $\gamma$-H\"older continuous with $0<\gamma \leq 1$).

 Now let us study the first term. Setting  $z=\lambda + i \epsilon $, it follows  that
$$
\int_{ \mathcal{K} } \frac{f(x) \eta(x)}{(x-\lambda) - i \epsilon} \, dx
=
\mathfrak{R}(\lambda,\epsilon) + i  \mathfrak{I}(\lambda,\epsilon)
$$
where
$$
{\mathfrak{R}}(\lambda,\epsilon) := \int_{ \mathbb{R} }
\frac{f(x) \eta(x)(x-\lambda) }{(x-\lambda)^2 + \epsilon^2} \, dx
\, , \quad
{\mathfrak{I}}(\lambda,\epsilon) := \int_{ \mathbb{R} }
\frac{\epsilon f(x) \eta(x) }{(x-\lambda)^2 + \epsilon^2} \, dx\, .
$$
Since $f\eta \in C_0^1(\mathbb{R})$ we obtain  that
$
I(\lambda):=\lim_{\epsilon \to 0^+} \mathfrak{I}(\lambda,\epsilon)=
\pi f(\lambda) \eta(\lambda)
$, see \cite{pearson} Lemma~2.3 i) p.41, so $I(\lambda) \in C_0^1(\mathbb{R})$.

Moreover, the derivative $I'(\lambda)= \pi (f(\lambda) \eta(\lambda) )'$ is $\alpha$-H\"older and thus $\beta$-H\"older in$J$ with $\beta<\alpha$. This follows because
 $(f\eta )' = f' \eta + f \eta'$ and $f\eta' \in C_0^1(I)$ and therefore $\alpha$-H\"older in $J$ and
\begin{eqnarray*}
|f'(x)\eta(x) - f'(y)\eta (y)| & \leq & |\eta(x) (f'(x)- f'(y))|+
|(\eta(x)- \eta(y))f'(y)|\\
       & \leq & M|x-y|^{\alpha}
\end{eqnarray*}
because $f'$ is $\alpha$-H\"older and $\eta \in C_0^{\infty}$.

Now let us consider $R(\lambda) := \lim_{\epsilon \to 0^+} {\mathfrak{R}}(\lambda,\epsilon)$.
This limit exists because $f\eta \in C_0^1$ and therefore $f\eta $ is $\alpha$-H\"older continuous with compact support, see Lemma 10 in\cite{Bel}. Moreover,
$$
R(\lambda) = P.V. \int_{\mathbb{R}} \frac{f(x) \eta(x) }{x-\lambda}:=
{\mathfrak{H}}[f\eta] (\lambda)
$$
where P.V. means the integral in the sense of principal value and $\mathfrak{H}$ stands for the Hilbert transform, see  \cite{pearson} Lemma~2.5, p.51and \cite{courant}.
Since $f\eta  \in C^1$ and  $f\eta \in L_p$ one deduces that
$({\mathfrak{H}}[f\eta](\lambda))' = \mathfrak{H}[(f\eta)'](\lambda)$, see \cite{pandey} Theorem~1 and formula~3.24.
Using again Privaloff-Korn's Theorem or Lemma 10 in\cite{Bel} and the $\alpha$-H\"older continuity of $(f\eta)'$  we finally obtain that
 $\mathfrak{H}[(f\eta)'](\lambda)$ is $\beta$-H\"older continuous and thus $(R(\lambda))'$ is $\beta$-H\"older continuous.
\end{proof}


\subsection*{Acknowledgments}
R. del Rio would like to thank the warm hospitality of the Pontificia Universidad Cat\'olica de Chile. Also, C. Fern\'andez expresses his gratitude to UNAM, M\'exico, for the
warm hospitality. R. del Rio thanks M. Ballesteros for useful comments.



\end{document}